\titlespacing{\paragraph}{0em}{0em}{0.5em}
\titlespacing{\subparagraph}{0em}{0em}{0.5em}
\DeclarePairedDelimiter\abs{\lvert}{\rvert}%
\DeclarePairedDelimiter\norm{\lVert}{\rVert}%
\let\oldabs\abs
\def\abs{\@ifstar{\oldabs}{\oldabs*}}
\let\oldnorm\norm
\def\norm{\@ifstar{\oldnorm}{\oldnorm*}}
\setlist[enumerate]{noitemsep, partopsep=0pt, topsep=0pt, parsep=0pt, itemsep=0pt}
\setlist[itemize]{noitemsep, partopsep=0pt, topsep=0pt, parsep=0pt, itemsep=0pt}
\crefname{equation}{}{}
\newlist{theoenum}{enumerate}{1} 
\setlist[theoenum]{label=\normalfont(\roman*), ref=\theproposition~\normalfont(\roman*), noitemsep, partopsep=0pt, topsep=0pt, parsep=0pt, itemsep=0pt}
\pgfplotsset{compat=newest}
\theoremstyle{plain}
\newtheorem{theorem}{Theorem}[section]
\newtheorem*{theorem*}{Theorem}
\newtheorem{proposition}[theorem]{Proposition}
\newtheorem{corollary}[theorem]{Corollary}
\newtheorem{lemma}[theorem]{Lemma}
\newtheorem*{lemma*}{Lemma}
\def\cref@thmoptarg[#1]#2#3#4{%
  \ifhmode\unskip\unskip\par\fi%
  \normalfont%
  \trivlist%
  \let\thmheadnl\relax%
  \let\thm@swap\@gobble%
  \thm@notefont{\fontseries\mddefault\upshape}%
  \thm@headpunct{.}
  \thm@headsep 5\p@ plus\p@ minus\p@\relax%
  \thm@space@setup%
  #2
  \@topsep \thm@preskip               
  \@topsepadd \thm@postskip           
  \def\@tempa{#3}\ifx\@empty\@tempa%
  \def\@tempa{\@oparg{\@begintheorem{#4}{}}[]}%
  \else%
  \refstepcounter[#1]{#3}
  \@namedef{cref@#3@alias}{#1}
  \def\@tempa{\@oparg{\@begintheorem{#4}{\csname the#3\endcsname}}[]}%
  \fi%
\@tempa}%
\crefname{theorem}{Theorem}{Theorems}
\crefname{proposition}{Proposition}{Propositions}
\theoremstyle{definition}
\newtheorem{definition}[theorem]{Definition}
\theoremstyle{remark}
\newtheorem{remark}[theorem]{Remark}
\newtheorem*{remark*}{Remark}      
\newcommand*\diff{\mathop{}\!\mathrm{d}}
\newcommand{\R}{\mathbb{R}}
\newcommand{\heis}{\mathbb{H}}
\newcommand{\Min}{\mathbb{M}}
\newcommand{\N}{\mathbb{N}}
\newcommand{\tmcp}[2]{\mathsf{TMCP^e}(#1,#2)}
\newcommand{\pder}[2]{\frac{\partial #1}{\partial #2}}
\newcommand{\dis}{\mathsf{d}}
\newcommand{\m}{\mathfrak{m}}
\newcommand{\TCD}{\mathsf{TCD}}
\newcommand{\Ent}{\mathsf{Ent}}
\newcommand{\Prob}{\mathscr{P}}
\newcommand{\prehausdorff}[1]{\mathcal{H}_{\uptau,#1}}
\newcommand{\hausdorff}{\mathcal{H}_{\uptau}}
\renewcommand{\L}{\mathrm{L}}
\renewcommand{\exp}{\mathrm{exp}}
\renewcommand{\epsilon}{\varepsilon}
\renewcommand{\subset}{\subseteq}
\newcommand{\mres}{\mathbin{\vrule height 1.6ex depth 0pt width 0.13ex\vrule height 0.13ex depth 0pt width 1.3ex}}
\DeclareMathOperator{\T}{\mathrm{T}}
\DeclareMathOperator{\sgn}{sgn}
\DeclareMathOperator{\Lip}{Lip}
\DeclareMathOperator{\arcsinh}{arcsinh}
\DeclareMathOperator{\supp}{supp}
\let\originalleft\left
\let\originalright\right
\renewcommand{\left}{\mathopen{}\mathclose\bgroup\originalleft}
\renewcommand{\right}{\aftergroup\egroup\originalright}
\title{%
  \begingroup
  \renewcommand\footnotemark{}
  \thanks{Date: \today}%
  \endgroup
  \vspace{-2em}\textbf{\uppercase{\large Hausdorff dimension and failure of synthetic curvature bounds in the sub-Lorentzian Heisenberg group}}
}
\author{%
  Samu\"el Borza%
  \thanks{Faculty of Mathematics, University of Vienna, Oskar-Morgenstern-Platz 1, 1090 Vienna, Austria}
  \protect\footnotemark[2]%
  \and
  Chiara Rigoni%
  \protect\footnotemark[1]%
  \protect\footnotemark[3]%
  \and
  Omar Zoghlami%
  \protect\footnotemark[1]%
  \protect\footnotemark[4]%
}
\date{}          
\begin{document}

\maketitle              

\begingroup
\makeatletter
\renewcommand{\thefootnote}{\fnsymbol{footnote}}
\let\orig@makefnmark\@makefnmark   
\def\@makefnmark{}                 
\footnotetext[2]{\textit{E-mails}: %
  \orig@makefnmark\href{mailto:samuel.borza@univie.ac.at}{samuel.borza@univie.ac.at};\,
  {\let\@makefnmark\orig@makefnmark \footnotemark[3]}%
  \href{mailto:chiara.rigoni@univie.ac.at}{chiara.rigoni@univie.ac.at};\,
  {\let\@makefnmark\orig@makefnmark \footnotemark[4]}%
\href{mailto:omar.zoghlami@univie.ac.at}{omar.zoghlami@univie.ac.at}}
\makeatother
\endgroup

\providecommand{\keywords}[1]
{
  \textbf{\textit{Keywords---}} #1
}

\providecommand{\msc}[1]
{
  \textbf{\textit{MSC (2020)---}} #1
}

\vspace{-1em}
\begin{abstract}
  We study the geodesics, Hausdorff dimension, and curvature bounds of the sub-Lorentzian Heisenberg group. Through an elementary variational approach, we provide a new proof of the structure of its maximizing geodesics, showing that they are lifts of hyperbolae coming from a Lorentzian isoperimetric problem in the Minkowski plane. We prove that the Lorentzian Hausdorff dimension of the space is $4$ and that the corresponding measure coincides with the Haar measure. We further establish a novel result in the spirit of the Ball-Box theorem, giving a uniform estimate of causal diamonds by anisotropic boxes. Finally, we show that the Heisenberg group satisfies neither the timelike curvature-dimension condition $\mathsf{TCD}(K,N)$ nor the timelike measure contraction property $\mathsf{TMCP}(K,N)$ for any values of the parameters $K$ and $N$, in sharp contrast with its sub-Riemannian counterpart.
\end{abstract}

\keywords{sub-Lorentzian geometry, Hausdorff dimension, curvature bounds}

\msc{53C50, 53C17, 49Q22, 28A75, 51K10}        
{\renewcommand{\contentsname}{\large Contents}%
  \small
  \tableofcontents
}
\section{Introduction}

While sub-Riemannian geometry has developed into a large and active field of research, with deep connections to metric geometry \cite{Agrachev2020}, \emph{sub-Lorentzian geometry}--the non-holonomic analogue of Lorentzian geometry--has received little attention. It was already anticipated as a Lorentzian analogue of sub-Riemannian geometry in Strichartz’s early monograph \cite{Strichartz1986}, but the field only began to be systematically studied with the seminal contributions of Grochowski in the early 2000s \cite{Grochowski2002,Grochowski2009}.

In recent years, left-invariant Lorentzian and sub-Lorentzian problems have attracted increasing attention within the framework of geometric control theory. Building on Grochowski’s pioneering work on the sub-Lorentzian geometry of the Heisenberg group, this line of research was further developed by Sachkov and Sachkova in \cite{SaSa1, sachkovsachkova23}. Significant progress has also been made by {Grong} and {Vasil'ev} in \cite{ErlVas}, who studied a left-invariant sub-Lorentzian Lie group isometric to anti-de Sitter space, as well as by Sachkov in \cite{Sa1, Sa2}, who analyzed left-invariant Lorentzian geometry on the Lobachevsky plane. Subsequent studies have extended the theory to other sub-Lorentzian settings. Notable examples include Sachkov’s work on the Martinet distribution \cite{SachMart}, the joint research of Cai, Huang, Sachkov, and Yang on sub-Lorentzian geodesics in the Engel group \cite{CHSY}, and Karmanova’s contribution on the coarea formula for mappings in Carnot groups equipped with a sub-Lorentzian metric \cite{Karmanova}.

We emphasize that all these significant advances in the study of sub-Lorentzian structures have been achieved through the use of optimal \emph{control} theory. However, it is worth noting that in the sub-Riemannian setting, major developments in understanding the geometric properties of sub-Riemannian manifolds have also been obtained by employing tools from optimal \emph{transport} theory. This suggests that similar techniques could offer valuable insights in the sub-Lorentzian context, an area that, to date, remains largely unexplored. The only known work in this direction is the recent preprint \cite{BKW} by the first author, {Klingenberg} and {Wood}, where the authors investigate the synthetic metric spacetime structure of the sub-Lorentzian Heisenberg group and study the associated optimal transport problem. In particular, they establish a sub-Lorentzian version of Brenier's theorem and derive a Monge-Amp\'ere-type equation.

The \emph{Heisenberg group}, denoted by $\mathbb{H}$, is the prototypical example of a geometric structure with constrained dynamics. It arises from restricting motion in $\mathbb{R}^3$ so that velocities are tangent only to a two-dimensional non-integrable \emph{horizontal distribution}. By fixing an inner product on this distribution, the sub-Riemannian, or \emph{Carnot-Carathéodory}, distance $\dis$ between two points is defined as the \emph{infimum} of the lengths of these \emph{horizontal} curves connecting them, see \cite{Capogna2007,Agrachev2020}. The \emph{sub-Lorentzian time-separation} $\uptau$ is defined analogously, but with respect to a Lorentzian metric on the horizontal distribution, as the \emph{supremum} of the proper times along \emph{causal} horizontal curves connecting two events.

The search for maximizing curves in the Heisenberg group, i.e., sub-Lorentzian geodesics, was investigated in \cite{Grochowski2004,Grochowski2006,sachkovsachkova23} using Pontryagin's maximum principle from optimal control theory. As a first contribution, this paper establishes a new proof of the shape of Heisenberg’s sub-Lorentzian geodesics, relying exclusively on elementary variational principles.

It is a well-known fact that horizontal curves in the Heisenberg group are lift of curves in the plane satisfying an area constraint. In the sub-Riemannian case, finding a length minimizer is equivalent to solving the Euclidean isoperimetric problem in the plane, an approach pursued in \cite{Capogna2007,LeDonneBook}, for instance. We show that the length-maximization problem in the sub-Lorentzian Heisenberg group is equivalent to a Lorentzian isoperimetric problem in the Minkowski plane, which we solve by applying direct methods from the calculus of variations. In particular, we prove the following result. Here $\mathbb{M}$ denotes the Minkowski plane with metric \(-\diff x^2 +\diff y^2\); for a curve $\gamma$, $A(\gamma)$ is the signed area enclosed by $\gamma$ and the straight line segment joining its endpoints in $\mathbb{M}$, while $L(\gamma)$ denotes the Lorentzian length of
$\gamma$.

\begin{theorem}\label{maintheorem1:geodesicssubLHeisenberg}
  Let $(a,b) \in \mathbb{R}^2$ and $c \in \mathbb{R}$.
  There exists a future-directed causal curve $\gamma : [0,1] \to \Min$ with
  \[
    \gamma(0) = \mathbf{0}, \quad \gamma(1) = (a,b), \quad A(\gamma)=c
  \]
  if and only if
  \[
    -a^2 + b^2 + 4|c| \leq 0 \quad \text{and} \quad a > 0.
  \]
  In this case, there exists a unique curve (up to reparametrisation) that maximizes the Lorentzian length among all such curves:
  \begin{enumerate}[label=\normalfont(\roman*), topsep=4pt,itemsep=4pt,partopsep=4pt,parsep=4pt]
    \item If $c=0$, the maximizer is the straight line from $\mathbf{0}$ to $(a,b)$ and it is timelike;
    \item If $c \neq 0$ and $-a^2+b^2+4|c|=0$, the maximizer is a broken line with a single breakpoint from $\mathbf{0}$ to $(a,b)$, and this curve is null;
    \item If $c \neq 0$ and $-a^2+b^2+4|c|<0$, the maximizer is an arc of a hyperbola from $\mathbf{0}$ to $(a,b)$, and this curve is timelike.
  \end{enumerate}
\end{theorem}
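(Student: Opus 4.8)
The plan is to reduce the constrained length-maximisation to a one-dimensional isoperimetric problem for graphs and then to exploit a concavity structure that simultaneously yields existence, uniqueness, and the shape of the maximiser.

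First I would normalise the data. The reflection $(x,y)\mapsto(x,-y)$ is an isometry of $\Min$ preserving causality and time-orientation while sending $A(\gamma)\mapsto -A(\gamma)$ and $(a,b)\mapsto(a,-b)$; since the asserted conditions involve only $|c|$ and $b^2$, I may assume $c\ge 0$. A future-directed causal curve satisfies $\dot x\ge|\dot y|$ and $\dot x>0$, whence $a=\int_0^1\dot x\diff t>0$ and $a\ge|\int_0^1\dot y\diff t|=|b|$; this already forces $a>0$ and $-a^2+b^2\le 0$. When $c>0$ the chord is strictly timelike, so a determinant-one (hence area- and length-preserving) Lorentz boost reduces the displacement to $(\ell,0)$ with $\ell=\sqrt{a^2-b^2}$. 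Because $x$ is then strictly increasing I may reparametrise $\gamma$ as a graph $y=y(x)$ over $x\in[0,\ell]$, for which causality reads $|y'|\le 1$, the endpoints become $y(0)=y(\ell)=0$, the length is $L=\int_0^\ell\sqrt{1-y'(x)^2}\diff x$, and an integration by parts turns the enclosed area into $A=-\int_0^\ell y\diff x$.

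For the existence dichotomy I would determine the range of $-\int_0^\ell y\diff x$ over admissible graphs. Minimising $\int_0^\ell y\diff x$ subject to $y(0)=y(\ell)=0$ and $|y'|\le 1$ is elementary: the steepest admissible descent and ascent produce the downward ``tent'' $y(x)=-\min(x,\ell-x)$, with minimal value $-\ell^2/4$, and this is the unique minimiser. Hence a curve with area $c$ exists iff $c\le\ell^2/4$, i.e. $-\ell^2+4c\le 0$, which in the original data reads $-a^2+b^2+4|c|\le 0$; together with $a>0$ this is exactly the stated criterion. At the boundary $c=\ell^2/4$ the only admissible graph is the tent, which (after the inverse boost) is precisely the null broken line with a single breakpoint of case (ii).

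Finally I would identify the maximiser in the interior regime $0\le c<\ell^2/4$. The integrand $p\mapsto\sqrt{1-p^2}$ is strictly concave on $(-1,1)$, so $L$ is a strictly concave functional on the convex admissible set; this gives at once that a maximiser is unique and that any critical point of the Lagrangian $\int_0^\ell(\sqrt{1-y'^2}+\mu y)\diff x$ is the global maximiser. The Euler--Lagrange equation integrates to $-y'/\sqrt{1-y'^2}=\mu x+C$, equivalently $\mu^2(y-D)^2-(\mu x+C)^2=1$, an arc of a hyperbola on which $|y'|<1$ automatically, so that it is timelike and the inequality constraint is inactive. For $c=0$ the multiplier vanishes and the boundary conditions force $y\equiv 0$, the timelike straight line of case (i); for $0<c<\ell^2/4$ the parameters $\mu,C,D$ are fixed by the two boundary conditions and the area, yielding the hyperbolic arc of case (iii), with solvability and uniqueness following from a monotonicity argument as $\mu$ runs over $(0,\infty)$ and the extremal interpolates continuously between the segment ($\int y\to 0$) and the tent ($\int y\to-\ell^2/4$). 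Since boosts are linear they preserve each of these shapes under the inverse reduction.

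The main obstacle is the rigorous justification of the passage to graphs: controlling the regularity of merely Lipschitz causal curves, discarding degenerate constant subarcs, and ensuring that the reparametrisation and integration by parts are valid, together with the careful treatment of the inequality constraint $|y'|\le 1$ and of the degenerate boundary case in which the maximiser ceases to be timelike and collapses onto the null broken line. The concavity of the length functional is the conceptual key, as it supplies the upper-semicontinuity required for a maximisation problem — the reverse of the usual convex lower-semicontinuity — and thereby delivers existence, uniqueness, and the Euler--Lagrange characterisation in one stroke.
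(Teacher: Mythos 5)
Your proposal is correct and follows essentially the same route as the paper: a future-preserving boost of determinant one reduces to graphs $y(x)$ over $[0,\sqrt{a^2-b^2}]$ with $|y'|\le 1$ vanishing at the endpoints, the tent function settles both the existence criterion $-a^2+b^2+4|c|\le 0$ and the null boundary case, and the Euler--Lagrange equation combined with strict concavity of $p\mapsto\sqrt{1-p^2}$ delivers the hyperbola together with its uniqueness and global maximality, exactly as in the paper's Propositions 2.5--2.6, Corollary 2.8, and Lemmas 2.9--2.10. The two loose ends you flag are precisely the ones the paper handles explicitly: the degenerate chord $a=|b|$ forcing $c=0$ (the paper's Proposition 2.4), and the fact that the constraint $\Lip(f)\le 1$ is not preserved under variations, which the paper resolves by reducing the supremum to the dense convex subclass $\Lip(f)<1$ before computing the first variation.
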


Therefore, the geodesics in the sub-Lorentzian Heisenberg group are obtained by lifting these isoperimetric curves in the Minkowski plane: their projection is the given curve in $\Min$, while its $z$-coordinate measures the area enclosed by it.

The synthetic approach to Lorentzian geometry, introduced in \cite{kunzingersaemann2018}, is a recent theory that has emerged as a central framework for studying low-regularity spacetimes and singularities in general relativity. The notion of a \emph{Lorentzian length space} plays a role analogous to that of \emph{length space} in metric geometry. Instead of a metric space, one considers a spacetime as a set of events $X$ equipped with a time-separation function $\uptau : X \times X \to [0, +\infty]$, satisfying the reverse triangle inequality, which represents the maximal elapsed ``clock time'' an observer can experience when traveling from one event to another. A notion of \emph{Lorentzian Hausdorff measure and dimension} has been introduced in \cite{McCann2022}, and our second result computes them in the sub-Lorentzian Heisenberg group.

\begin{theorem}
  The Lorentzian Hausdorff dimension of the sub-Lorentzian Heisenberg group is 4, and the corresponding Hausdorff measure coincides, up to a positive constant, with the Haar measure of the Heisenberg group.
\end{theorem}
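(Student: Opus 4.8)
The plan is to identify $\hausdorff^4$ with the Haar measure $\m$ of $\heis$ by combining our Ball--Box-type estimate for causal diamonds with the uniqueness of Haar measure, so that neither object has to be computed explicitly. The one input that does all the work is the two-sided volume bound $\m(\J(p,q)) \asymp \uptau(p,q)^4$, valid uniformly for causally related events $p \le q$; this is the measure-theoretic shadow of the anisotropic-box estimate, since a causal diamond with $\uptau(p,q)=r$ is trapped between anisotropic boxes whose two horizontal edges scale like $r$ and whose vertical edge scales like $r^2$, so that its Haar volume is comparable to $r\cdot r\cdot r^2=r^4$. The exponent $4$ here is precisely what forces both the value of the dimension and the proportionality constant to be finite and positive.

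First I would record that $\hausdorff^N$ is invariant under the left translations of $\heis$. Left-invariance of the sub-Lorentzian structure makes $\uptau$ left-invariant, and a left translation $L_g$ sends causal diamonds to causal diamonds, $L_g(\J(p,q))=\J(gp,gq)$, preserving the weights $\uptau(p,q)$; hence $L_g$ carries admissible covers to admissible covers of equal pre-measure and $\hausdorff^N(L_gA)=\hausdorff^N(A)$. Granting for the moment that $\hausdorff^4$ is a nonzero, locally finite Borel measure --- which the bounds below will confirm --- the uniqueness of Haar measure on the Lie group $\heis\cong\R^3$ then forces $\hausdorff^4=c\,\m$ for a single constant $c$, and it remains only to check $0<c<+\infty$.

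The constant, and with it the dimension, is pinned down from the diamond estimate alone. For the lower bound I would use that any cover of a bounded Borel set $A$ by diamonds $\J(p_i,q_i)$ of time-size $\uptau_i\le\delta$ satisfies $\m(A)\le\sum_i\m(\J(p_i,q_i))\lesssim\sum_i\uptau_i^4\le\delta^{\,4-N}\sum_i\uptau_i^N$ whenever $N\le 4$; passing to the infimum over covers gives $\prehausdorff{\delta}^N(A)\gtrsim\m(A)\,\delta^{\,N-4}$, which stays bounded below for $N=4$ and diverges for $N<4$. For the upper bound I would instead build an economical cover at scale $\delta$: the companion bound $\m(\J_i)\gtrsim\uptau_i^4$ caps the number of essentially disjoint diamonds of time-size $\delta$ that fit inside a fixed region, so a Vitali-type selection yields a cover with $\sum_i\uptau_i^N\lesssim\m(A)\,\delta^{\,N-4}$, finite for $N=4$ and vanishing for $N>4$. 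Together these show that the Lorentzian Hausdorff dimension of $\heis$ is exactly $4$ and that the constant above satisfies $0<c<+\infty$.

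The hard part will be the covering (upper) bound. The lower bound needs nothing more than the containment of the covering diamonds together with the elementary inequality $\m(\J_i)\lesssim\uptau_i^4$, whereas controlling the total $\uptau^N$-weight of a cover demands a genuine covering lemma for causal diamonds. Because diamonds are indexed by ordered causal pairs and are not centred like metric balls, the usual Vitali argument has to be adapted to the anisotropic, causal geometry; here I expect the Heisenberg dilations $\delta_\lambda(x,y,z)=(\lambda x,\lambda y,\lambda^2 z)$ --- under which $\uptau$ is homogeneous of degree one and $\m$ of degree four --- to reduce every estimate to a single scale. The only remaining technical point, namely that $\hausdorff^4$ really is a locally finite Borel measure so that Haar uniqueness may be invoked, follows from the general structure of the Lorentzian Hausdorff measure in \cite{McCann2022} together with the finiteness just established.
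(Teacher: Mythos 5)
Your overall architecture (left-invariance of $\hausdorff^4$, Haar uniqueness, dilation homogeneity to pin the exponent) is sound and matches the paper's \cref{prop:hausdorff_dimension_must_be_4}, and your lower bound $\hausdorff^4(B)\gtrsim\mathcal{L}^3(B)$ is correct as stated: it uses only $\sigma$-subadditivity of $\mathcal{L}^3$ and the one-sided volume estimate $\mathcal{L}^3(J(p,q))\leq K\,\uptau(p,q)^4$, which is exactly \cref{prop:volume_growth_estimate}. The genuine gap is the estimate you build everything else on: the two-sided comparison $\m(J(p,q))\asymp\uptau(p,q)^4$ is \emph{false} in the sub-Lorentzian Heisenberg group; only the upper bound holds. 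Causal diamonds are not trapped between anisotropic boxes at scale $\uptau(p,q)$: the Diamond-Box theorem (\cref{thm:ball_box_lorentzian}) controls $J(p,q)$ from outside by $\operatorname{Box}(\dis(p,q))$, with the sub-Riemannian distance rather than the time separation, and $\dis(p,q)$ can be arbitrarily large while $\uptau(p,q)$ stays bounded (indeed vanishes for null-related pairs). Concretely, normalizing $\uptau(\mathbf{0},q)=1$ and letting the exponential-map parameter $w\to\pm\infty$, the vertex $q$ slides toward the light cone and $\mathcal{L}^3(J(\mathbf{0},q))\to 0$ (this is precisely the content of \cref{lemma:volume_limit_is_zero}); this degeneration is the Lorentzian phenomenon, going back to Grochowski, that makes the naive $\uptau$-ball-box comparison fail. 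Hence there is no constant $c>0$ with $\m(J(p,q))\geq c\,\uptau(p,q)^4$.

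This falsity breaks your covering (upper) bound in two places. First, the Vitali-type counting cannot work as stated: since diamonds of time-size $\delta$ can have arbitrarily small volume, arbitrarily many essentially disjoint ones fit inside a fixed region, so disjointness of diamonds yields no cardinality bound. Second, admissible covers in the definition of $\prehausdorff{\delta}^d$ are constrained by the \emph{metric} diameter of the covering diamonds, not by their time-size, and small $\uptau(p_i,q_i)$ does not imply small $\operatorname{diam}_{\dis}(J(p_i,q_i))$, so your selected diamonds need not even be admissible at scale $\delta$. The paper's fix for both problems is to do the counting with metric balls, whose Lebesgue volume is exactly $M\rho^4$ by dilation homogeneity, and then pass to diamonds via \cref{prop:controlling_balls_with_diamonds}: every ball $B(p_i,\delta)$ is contained in a non-degenerate diamond $J_i$ with \emph{both} $\uptau$-size $2D\delta$ and $\operatorname{diam}_{\dis}(J_i)\leq 4CD\delta$, obtained by translating and dilating the model diamond $J((-1,0,0),(1,0,0))$. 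A maximal $\delta$-separated set $\{p_i\}$ in $B(p,r/2)$ has at most $2^4\mathcal{L}^3(B)/(M\delta^4)$ points because the balls $B(p_i,\delta/2)$ are disjoint; enclosing each $B(p_i,\delta)$ in such a diamond produces an admissible cover with $\sum_i\uptau(x_i,y_i)^4\lesssim\mathcal{L}^3(B)$, which is what your argument needs. So the missing ingredient is this ball-inside-fat-diamond proposition (plus ball-based counting); without it, or some substitute for the false lower volume bound, the finiteness of $\hausdorff^4$ — and with it the Radon property required to invoke Haar uniqueness — remains unproven.
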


For comparison, the sub-Riemannian Heisenberg group also has \emph{metric} Hausdorff dimension 4, and its 4-dimensional Hausdorff measure agrees with the Haar measure. For those acquainted with sub-Riemannian geometry, it is well known that in the sub-Riemannian case this result follows from the \emph{Ball-Box} theorem \cite{mitchellSR,Bellaiche1996}, which states that metric balls of radius $r > 0$ centred at the origin $\mathbf{0}$ satisfy
\[
  \mathsf{Box}(c_1 r) \subseteq B(\mathbf{0}, r) \subseteq \mathsf{Box}(c_2 r),
\]
where $\mathsf{Box}(r) := [-r, r] \times [-r, r] \times [-r^2, r^2]$, for universal constants $c_1, c_2 > 0$ and all $r > 0$. In other words, it holds that
\begin{equation}
  \label{eq:distanceboxHeisenberg}
  c_1 \sqrt{x^2 + y^2 + |z|} \leq \mathsf{d}(\mathbf{0}, (x, y, z)) \leq c_2 \sqrt{x^2 + y^2 + |z|}, \qquad \text{ for all } (x, y, z) \in \mathbb{H}.
\end{equation}
It was shown in \cite{Grochowski2006} that naively replacing $\mathsf{d}$ with Heisenberg's time-separation function $\uptau$ in \cref{eq:distanceboxHeisenberg} and the content of the square roots by the Lorentzian analogue $x^2 - y^2 - 4 |z|$ does not hold. More precisely, the lower bound in \cref{eq:distanceboxHeisenberg} holds with $c_1 = 1$, whereas there is no $c_2$ such that the upper bound can be satisfied. Instead, we prove in this work a \emph{Diamond-Box theorem}, the first of its kind, estimating uniformly causal diamond with the anisotropic boxes $\mathsf{Box}(r)$.

\begin{theorem}
  There exists a constant $C_1 > 0$ such that for all $p, q \in \mathbb{H}$, it holds that
  \[
    J(p, q) \subseteq \mathsf{Box}(C_1 \cdot \dis(p, q)).
  \]
  Moreover, there are constants $C_2, D > 0$ such that for every $r > 0$, there exist $p, q \in \mathbb{H}$ with
  \[
    \mathsf{Box}(C_2 r) \subseteq J(p,q),
    \quad \uptau(p,q) = 2Dr,
    \quad \text{and} \quad
    \mathrm{diam}_{\dis}(J(p,q)) \leq 4C_1Dr.
  \]
\end{theorem}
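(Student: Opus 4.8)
The plan is to extract both inclusions from two elementary properties of causal curves combined with the description of the causal future that \cref{maintheorem1:geodesicssubLHeisenberg} provides. A future-directed causal curve has horizontal velocity $(\dot x,\dot y)$ with $\dot x>0$ and $\dot x^2\ge \dot y^2$; integrating these pointwise inequalities shows that whenever $w\in J^+(p)$ one has $x_w\ge x_p$ together with
\[
  L(\gamma)=\int \sqrt{\dot x^2-\dot y^2}\,\diff t\le x_w-x_p,
  \qquad
  \int \sqrt{\dot x^2+\dot y^2}\,\diff t\le \sqrt 2\,(x_w-x_p),
\]
the second of which gives $\dis(p,w)\le \sqrt 2\,(x_w-x_p)$. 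Lifting planar curves to $\heis$ and reading \cref{maintheorem1:geodesicssubLHeisenberg} at the origin identifies the causal future as $J^+(\zero)=\{(x,y,z):x\ge 0,\ x^2-y^2-4\abs{z}\ge 0\}$, so in particular $4\abs{z_w}\le x_w^2$ on $J^+(\zero)$. Since $\uptau$ and $\dis$ are left-invariant one has $J(p,q)=p\cdot J(\zero,p^{-1}q)$, and I would reduce both statements to $p=\zero$, transporting the box by $p$ afterwards.

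For the upper inclusion, fix $q$ and take $w\in J(\zero,q)$. Membership in $J^+(\zero)$ and in $J^-(q)$ forces $0\le x_w\le x_q$, and since the planar length of any horizontal curve dominates the Euclidean displacement of its endpoints, $x_q\le \dis(\zero,q)$. The same fact gives $\abs{y_w}\le \dis(\zero,w)$, while the length bound gives $\dis(\zero,w)\le \sqrt2\,x_w$, so $\abs{y_w}\le \sqrt2\,\dis(\zero,q)$; and the cone inequality gives $4\abs{z_w}\le x_w^2\le \dis(\zero,q)^2$. Collecting these, $w\in\mathsf{Box}(\sqrt2\,\dis(\zero,q))$, so the first part holds with $C_1=\sqrt2$.

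For the lower inclusion I would exhibit the diamonds explicitly by placing the endpoints symmetrically on the timelike axis: fix a free constant $D>0$ and set $p=(-Dr,0,0)$, $q=(Dr,0,0)$. On this axis the group law is abelian, so $p^{-1}q=(2Dr,0,0)$, and \cref{maintheorem1:geodesicssubLHeisenberg}(i) gives $\uptau(p,q)=\uptau(\zero,(2Dr,0,0))=2Dr$ and $\dis(p,q)=2Dr$, both realized by the straight timelike segment. The automorphism $\phi(x,y,z)=(-x,y,-z)$ interchanges $p$ and $q$, reverses the time orientation, and fixes every $\mathsf{Box}(s)$; since it maps $J^+(p)$ onto $J^-(q)$, it is enough to verify $\mathsf{Box}(C_2r)\subseteq J^+(p)$ and then apply $\phi$. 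Using $p^{-1}\cdot(x,y,z)=\big(Dr+x,\,y,\,z+\tfrac{Dr\,y}{2}\big)$, a point of $\mathsf{Box}(C_2r)$ lands in $J^+(\zero)$ because $Dr+x>0$ and
\[
  (Dr+x)^2-y^2-4\abs{z+\tfrac{Dr\,y}{2}}
  \ \ge\ r^2\bigl(D^2-4DC_2-4C_2^2\bigr)\ \ge\ 0
\]
once $C_2$ is small compared with $D$ (for instance $C_2=D/10$). Finally, for the diameter I would use the length bound twice: every $w\in J(p,q)\subseteq J^+(p)$ satisfies $\dis(p,w)\le\sqrt2\,(x_w-x_p)\le\sqrt2\,(x_q-x_p)=2\sqrt2\,Dr$, so the triangle inequality through $p$ yields $\mathrm{diam}_{\dis}(J(p,q))\le 4\sqrt2\,Dr=4C_1Dr$.

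The orientation and length estimates are routine; the real obstacle is the lower inclusion, and precisely the shearing term $\tfrac{Dr\,y}{2}$ produced in the vertical coordinate by left-translation. This nonlinear coupling is the very mechanism behind the failure of a naive upper box bound recalled before the statement (the nonexistence of $c_2$), and it forces $C_2r$ to be taken small relative to the separation $2Dr$, so that translating the cone does not let points of the box escape $J^+(p)$. The care lies in checking that a single fixed $C_2$ works uniformly for every $r>0$ while simultaneously matching the three prescribed scales: the time separation $2Dr$, the inscribed box, and the diameter bound $4C_1Dr$.
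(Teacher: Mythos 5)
Your proof is correct, and on the decisive half it takes a genuinely different route from the paper's. The first inclusion is handled essentially as in \cref{thm:ball_box_lorentzian}: reduce to $p=\zero$ by left-invariance, trap $x_w,y_w,z_w$ using the explicit cone description of $J^+(\zero)$ and the fact that $w\in J^-(q)$ forces $x_w\le x_q$, and bound $x_q\le\dis(\zero,q)$ because sub-Riemannian length dominates the Euclidean displacement of the planar projection (your constant $\sqrt2$ versus the paper's $1$ is immaterial). The divergence is in the inscribed-box statement and the diameter bound. The paper (\cref{prop:controlling_balls_with_diamonds}) argues softly: the origin is an \emph{interior} point of the fixed diamond $J((-1,0,0),(1,0,0))$, hence some ball $B(\zero,\rho)$ fits inside, and dilation homogeneity produces all scales; the constant $D=1/\rho$ is not explicit, as the paper itself remarks. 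You instead verify $\mathsf{Box}(C_2r)\subseteq J^+((-Dr,0,0))$ by a direct computation with the group law, the key point being that the lower bound $r^2\bigl(D^2-4DC_2-4C_2^2\bigr)$ is homogeneous of the correct degree, so one fixed choice $C_2=D/10$ works for every $r$; and you obtain the inclusion into $J^-((Dr,0,0))$ for free from the map $\phi(x,y,z)=(-x,y,-z)$, which is indeed a group automorphism with $\phi_*X=-X$ and $\phi_*Y=Y$, hence a sub-Lorentzian isometry reversing time orientation that swaps the two vertices and fixes every box. Likewise, for the diameter you bypass the sub-Riemannian Ball-Box theorem (which the paper invokes to pass from boxes to metric balls) by integrating $\sqrt{\dot x^2+\dot y^2}\le\sqrt2\,\dot x$ along causal curves to get $\dis(p,w)\le\sqrt2\,(x_w-x_p)$. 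The trade-off: the paper's argument is shorter and leans on general structure (openness of chronological diamonds, Ball-Box, homogeneity), while yours is fully quantitative, producing explicit admissible constants $C_1=\sqrt2$ and $C_2=D/10$, and thereby answers in passing the paper's own remark that its proof of the inscribed-set inclusion is non-quantitative.
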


One motivation for developing a Lorentzian metric spacetime theory is to provide a framework for spacetimes of low regularity underlying Einstein's general relativity. In this setting, notions of curvature and curvature bounds play a central role, for instance in the Penrose--Hawking singularity theorems. This has led to the introduction of \emph{timelike} curvature-dimension conditions defined via optimal transport. They are designed to capture the condition $\mathrm{Ric} \geq K$ in all \emph{timelike} directions in a synthetic way, that is, using only the time-separation function, the causal structure, and a fixed background measure.

Curvature bounds via optimal transport were first introduced in the (possibly non-smooth) Riemannian framework
with the curvature-dimension condition $\mathsf{CD}$ \cite{lottvillani,S-ActaI,S-ActaII} and the measure contraction property $\mathsf{MCP}$ \cite{ohta,S-ActaII}. The corresponding \emph{timelike} curvature-dimension conditions $\mathsf{TCD}$ and \emph{timelike} measure contraction properties $\mathsf{TMCP}$ have recently been developed in the synthetic Lorentzian setting in the recent works \cite{cavallettimondino2024,Braun2023a}.

As the third and final aim of this paper, we investigate the validity of these conditions in the sub-Lorentzian Heisenberg group. It is known that sub-Riemannian manifolds fail the $\mathsf{CD}$ condition, as shown for example in \cite{juillet2021,Rizzi2023,universalnoCD}, while the $\mathsf{MCP}$ may either hold or fail, see \cite{borza2025}. Notably, \cite{juillet2009} showed that the sub-Riemannian Heisenberg group satisfies $\mathsf{MCP}(K,N)$ if and only if $K \leq 0$ and $N \geq 5$. Our final result establishes the surprising fact that in the Heisenberg group not only the $\mathsf{TCD}$ but also the $\mathsf{TMCP}$ fails.

\begin{theorem}
  For every $K \in \mathbb{R}$ and $N \geq 1$, the sub-Lorentzian Heisenberg group satisfies neither $\mathsf{TCD}(K,N)$ (with $N \leq +\infty$) nor $\mathsf{TMCP}(K,N)$ (with $N < +\infty$).
\end{theorem}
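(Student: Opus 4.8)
The plan is to disprove $\mathsf{TMCP}(K,N)$ for every $K\in\R$ and every finite $N$, and then to leverage this for $\mathsf{TCD}$. The logical backbone is the implication $\mathsf{TCD}(K,N)\Rightarrow\mathsf{TMCP}(K,N)$, valid in the synthetic Lorentzian framework once timelike non-branching is known; this holds here because \cref{maintheorem1:geodesicssubLHeisenberg} provides a \emph{unique} maximiser between timelike related events. Hence $\lnot\mathsf{TMCP}(K,N)\Rightarrow\lnot\mathsf{TCD}(K,N)$ for all finite $N$, and only the endpoint $\mathsf{TCD}(K,\infty)$ needs separate treatment. The case $K>0$ is dispatched immediately: both conditions with $K>0$ and $N<\infty$ force a Lorentzian Bonnet--Myers bound on the timelike diameter, whereas $\heis$ has infinite timelike diameter---already the straight timelike segments of case (i) in \cref{maintheorem1:geodesicssubLHeisenberg} lift to geodesics of unbounded proper time. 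It therefore remains to handle $K\le 0$ with $N<\infty$, and all $K$ with $N=\infty$.

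For the core case I would test the measure contraction directly. Fix the origin $\zero$ and recall that, by \cref{maintheorem1:geodesicssubLHeisenberg}, each $q\in I^+(\zero)$ is joined to $\zero$ by a unique future-directed timelike maximiser $s\mapsto\gamma^q_s$, namely the lift of a hyperbola arc whose enclosed signed area records the vertical coordinate. The object to control is the contraction map $\Phi_s\colon q\mapsto\gamma^q_s$ and its Jacobian $\mathcal{J}(s,q)$, computed against the Haar measure---which by the second theorem of this paper agrees, up to a constant, with the Lorentzian Hausdorff measure. Writing the geodesic flow from $\zero$ in exponential coordinates and differentiating in the initial covector, I would obtain $\mathcal{J}(s,q)$ in closed form; since the planar projections are hyperbolae, this expression is assembled from $\sinh$ and $\cosh$ of the arc parameter, in exact structural analogy with the $\sin,\cos$ appearing in Juillet's sub-Riemannian computation \cite{juillet2009}. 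Crucially, the determinant also carries the contribution of the non-metric vertical direction, which is what raises the effective ``geodesic dimension'' above the topological one.

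The crux is a convexity statement read off from $\ell(s):=\log\mathcal{J}(s,q)$. Specialised to contraction of the reference measure onto $\zero$, the condition $\mathsf{TMCP}(K,N)$ is equivalent to a Riccati-type inequality along each geodesic of the form $\ell''(s)+\tfrac1N\ell'(s)^2\le -K$, while its $N=\infty$ counterpart $\mathsf{TCD}(K,\infty)$ reduces to $\ell''(s)\le -K$, i.e.\ to $K$-convexity of the Boltzmann entropy $\Ent$ along the associated timelike Wasserstein geodesics. My expectation---and the heart of the matter---is that the Lorentzian signature flips the sign that renders the sub-Riemannian profile admissible: the hyperbolic building blocks make $s\mapsto\ell(s)$ \emph{convex} on a subinterval, and, worse, as the hyperbola parameter (the ``curvature'') of $q$ grows the quantity $\ell''(s)+\tfrac1N\ell'(s)^2$ is \emph{unbounded above} for each fixed $N$. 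Thus no choice of $K$ can dominate it: $\mathsf{TMCP}(K,N)$ fails for every $K\le 0$ and every finite $N$ (so, with the reductions, for all $K$ and all finite $N$), and the same blow-up of $\ell''$ shows the synthetic timelike Ricci curvature is $-\infty$, ruling out $\mathsf{TCD}(K,\infty)$ for every $K$ as well---paralleling the universal failure of $\mathsf{CD}$ in the sub-Riemannian world \cite{universalnoCD}. Note also that concavity of $\mathcal{J}(\cdot,q)^{1/N}$ is upward-closed in $N$ and degenerates to concavity of $\ell$, so failure at the log level already forbids every finite $N$.

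The main obstacle is the explicit, sign-sensitive computation of $\mathcal{J}(s,q)$ and the extraction of a blow-up of $\ell''(s)+\tfrac1N\ell'(s)^2$ that is \emph{uniform} over the family of targets $q$ and robust against the precise $\sigma_{K,N}$-corrections for $K<0$; differentiating the sub-Lorentzian exponential map and keeping careful track of the vertical contribution is where the real work lies. A secondary technical point is to make the reductions airtight: confirming the exact form of the implication $\mathsf{TCD}\Rightarrow\mathsf{TMCP}$ and of the timelike Bonnet--Myers bound in \cite{cavallettimondino2024,Braun2023a}, and verifying that the standing synthetic hypotheses (timelike non-branching, regularity of $\uptau$, and the measurability required for the contraction) are genuinely met in $\heis$, so that the $K>0$ and $N=\infty$ endpoints are fully covered.
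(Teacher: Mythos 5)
Your core mechanism for disproving $\mathsf{TMCP}$ --- computing the Jacobian of the sub-Lorentzian exponential map along the contraction onto a point and exploiting the $\sinh/\cosh$ blow-up in the hyperbola parameter --- is exactly the paper's mechanism: their \cref{lemma:jacobian_exponential} gives the Jacobian in closed form, and the ratio $\abs{D\exp_{\zero}^{(t-1)}}/\abs{D\exp_{\zero}^{(-1)}}$ tends to $0$ as $w \to -\infty$ for each fixed $t$, which is incompatible with the lower bound $t^N$ for \emph{any} finite $N$. Your reduction of $\mathsf{TCD}(K,N)$, $N<\infty$, to $\mathsf{TMCP}(K,N)$ via \cref{prop:TCDtoTMCP} is also legitimate here (the space is $\mathcal{K}$-globally hyperbolic and geodesic). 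Where the reductions differ: the paper never uses Bonnet--Myers, and never has to fight the $K<0$ distortion coefficients that you flag as an obstacle. Instead, \cref{prop:KNto0N} uses the dilations $\delta_n$, the rescaling $\uptau \mapsto n\uptau$, and the stability theorem for these conditions to show that $\mathsf{TMCP}(K,N)$ or $\mathsf{TCD}(K,N)$ for \emph{any} $K$ (and any measure $\m$ satisfying their standing assumption) forces $\mathsf{TMCP}(0,N)$ or $\mathsf{TCD}(0,N)$ for the Lebesgue measure. After this blow-up only the $K=0$ coefficients $t^N$ ever appear, so both your $K>0$ branch and your ``robustness against the $\sigma_{K,N}$-corrections'' worry evaporate.

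The genuine gap is your treatment of $\mathsf{TCD}(K,\infty)$. Your function $\ell(s)=\log\mathcal{J}(s,q)$ is defined along the contraction of the reference measure onto $\zero$, but $\mathsf{TCD}(K,\infty)$ is an entropy-convexity inequality between \emph{absolutely continuous} measures and cannot be tested by contraction onto a point: the endpoint $\delta_{\zero}$ has $\Ent(\delta_{\zero}|\m)=+\infty$, so the inequality is vacuous for such plans (this is precisely why $\mathsf{TMCP}$ is only formulated for $N<\infty$). The assertion that ``the same blow-up of $\ell''$ shows the synthetic timelike Ricci curvature is $-\infty$'' therefore does not yield a contradiction as stated. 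What is needed is a two-measure argument, and this is what the paper supplies: from $\mathsf{TCD}(0,\infty)$ it extracts the timelike Brunn--Minkowski inequality $\mathsf{TBM}(0,\infty)$ (\cref{prop:TCDtoTBM}), takes $A_0^\epsilon$ a small ball around $(-1,0,0)$ and $A_1^\epsilon$ its image under the geodesic inversion through $\zero$ (which is volume-preserving by the Jacobian lemma, so the right-hand side of the inequality equals $1$), and then invokes Juillet's midpoint-volume theorem \cite[Theorem 1]{Juillet2010} to bound the asymptotic volume ratio of the midpoint set by
\[
  2^3\,\abs{\det D_{q_{-1}}M_{q_1}} \;=\; 2^3\cdot\tfrac{1}{32} \;=\; \tfrac14 \;<\; 1 .
\]
Note that this contradiction has nothing to do with your blow-up: it already occurs at $w=0$ (straight-line geodesics) and reflects the ``geodesic dimension $5$'' factor $(1/2)^5$ in the midpoint Jacobian, the same phenomenon behind the sub-Riemannian failure of $\mathsf{CD}$. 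Converting pointwise Jacobian information into volume estimates for intermediate sets of absolutely continuous measures requires exactly this midpoint-map machinery, which is the missing ingredient in your plan; once it is added, your outline can be completed, but as written the $N=\infty$ endpoint is not covered.
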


The paper is organized as follows. In \cref{section:subLgeometry}, we introduce the Heisenberg group together with its sub-Lorentzian structure. In \cref{subsection:isopiffgeod}, we show that length-maximizing curves can be characterized as solutions of an isoperimetric problem in the Minkowski plane, which is then solved in \cref{subsection:variationalisopMink}. In \cref{section:exponentialmap}, we recover the explicit form of the sub-Lorentzian exponential map of the Heisenberg group. Next, in \cref{section:lorentzianmetricspacetime,section:TCDTMCP}, we recall definitions, properties, and known results on Lorentzian length spaces and timelike curvature bounds via optimal transport. In \cref{section:compatibility}, we establish the compatibility between the sub-Lorentzian and synthetic structures. \cref{section:hausdorffdimension} is devoted to the Lorentzian Hausdorff measure and dimension, while \cref{sect:heis_tcd_tmcp} deals with the failure of synthetic curvature notions in the sub-Lorentzian Heisenberg group.

\section*{Acknowledgements}

We thank Clemens Sämann and Luca Rizzi for several helpful discussions. This research was funded by the Austrian Science Fund (FWF) [Grant \href{https://www.fwf.ac.at/en/research-radar/10.55776/EFP6}{DOI 10.55776/EFP6}] and C.R.'s research was funded by FWF [Grant \href{https://www.fwf.ac.at/en/research-radar/10.55776/ESP224}{DOI:10.55776/ESP224}]. For open access purposes, the authors have applied a CC BY public copyright license to any author accepted
manuscript version arising from this submission. Moreover, part of this research was carried out by C.R. at the Hausdorff Institute
of Mathematics in Bonn, during the trimester program “Metric Analysis”. The author wish to express
her appreciation to the institution for the stimulating atmosphere and the excellent working conditions.

\section{The sub-Lorentzian Heisenberg group}

\subsection{Sub-Lorentzian geometry}
\label{section:subLgeometry}

In this section, we introduce the Heisenberg group, together with its sub-Lorentzian structure and some of its standard properties.

For the sake of this work, it is enough to view the Heisenberg group $\heis$ as the Lie group consisting of the Euclidean space $\R^3$ equipped with the non-abelian group law:
\[
  (x,y,z) \ast (x',y',z') \coloneqq \left( x+x',y + y', z + z' +\frac{1}{2}(x y'- x' y) \right),
\]
for all $(x, y, z), (x', y', z') \in \R^3$. Clearly, the identity element is the origin $e = \mathbf{0}= (0, 0, 0)$ of $\R^3$ while $(x, y, z)^{-1} = (-x, -y, -z)$.

The left translation by $p \in \heis$ is the automorphism $L_p : \heis \to \heis$ given by
\[
  L_p(x, y, z) \coloneqq p \ast (x, y, z), \qquad \forall\, (x, y, z) \in \heis,
\]
and a vector field $V$ of $\heis$ is said to be \emph{left-invariant} if $\diff_q L_p [V(q)] = V(p \ast q)$ for all $p, q \in \heis$. Recall that a left-invariant vector field is uniquely determined by its value at the group identity element. The left-invariant vector fields $X, Y, Z$ that coincide at the origin $e$ with $\pder{}{x}, \pder{}{y}, \pder{}{z}$, respectively, are
\[
  X = \pder{}{x} - \frac{y}{2} \pder{}{z},\ Y = \pder{}{y} + \frac{x}{2} \pder{}{z}, \text{ and } Z = \pder{}{z}.
\]
The Heisenberg group also admits the {\em dilations} $\delta_\lambda(x, y, z) \coloneqq (\lambda x, \lambda y, \lambda^2 z)$ for all $\lambda \in \R$, which are also automorphism.

The sub-bundle $\Delta$ of the tangent bundle $\T(\heis)$ of $\heis$, defined as
\[
  \Delta \coloneqq \operatorname{span} \left\{ X,Y \right\} \subseteq \T(\heis),
\]
is referred to as the {\em horizontal distribution} of $\heis$, and is left-invariant since $X$ and $Y$ are as well. At any point $p \in \heis$, the vectors $v \in \Delta_p$ are called {\em horizontal vectors}.
Importantly, the distribution $\Delta$ is {\em not} involutive; in fact, we have
\[
  [X,Y] = \pder{}{z} = Z \not \in \Delta.
\]

A curve $\gamma : I \to \heis$ defined on some interval $I \subseteq \R$ is said to be {\emph horizontal} if $\gamma$ is absolutely continuous and if there exists $u, v \in L^{\infty}(I, \R)$ such that
\begin{equation}
  \label{eq:horizontalcurveinH}
  \dot \gamma (t) = u(t) X(\gamma(t)) + v(t) Y(\gamma(t)), \qquad \text{ for almost every } t \in I.
\end{equation}
For any $p \in \heis$ and any horizontal curve $\gamma$, the translated curve $p \ast \gamma \coloneqq L_p \circ \gamma$ is also horizontal since $\Delta$ is left-invariant and left translations are diffeomorphisms.

The sub-Lorentzian structure on $\heis$ is induced by considering the Lorentzian metric $g$ on $\Delta$ uniquely determined by the conditions
\[
  g(X,X)=-1, \quad g(X,Y) = 0, \quad g(Y,Y) = 1.
\]
The structure $(\heis, \Delta, g)$ is known as the {\em sub-Lorentzian Heisenberg group}.
\begin{remark}
  The sub-Lorentzian structure is preserved under left translations. Indeed, for any $p \in \heis$, since $X$ and $Y$ are left-invariant and form a Lorentzian basis for the metric $g$, consider any pair of tangent vectors $v, w \in \Delta_q$ at a point $q \in \heis$, along with their Lorentzian product with respect to $g_q$. The differential $\diff_q L_p$ maps the pair $(v, w)$ to a pair of vectors in $\Delta_{L_p(q)}$ that preserves this product. This implies, in particular, that $\diff_q L_p$ defines an isometry between the spaces $(\Delta_q, g_q)$ and $(\Delta_{L_p(q)}, g_{L_p(q)})$.
\end{remark}

For any $p \in \heis$, we say that an horizontal vector $v \in \Delta_p$ is
\[
  \begin{cases}
    &\text{causal} \\
    &\text{timelike} \\
    &\text{null} \\
    &\text{spacelike}
  \end{cases}
  \quad \text{ if } \quad g_p(v, v) \quad
  \begin{cases}
    &\leq 0 \text{ and } v \neq 0 \\
    &< 0 \\
    &= 0 \text{ and } v \neq 0 \\
    &> 0 \text{ or } v = 0
  \end{cases},
\]
and is \emph{future-directed} if $g(v, X) < 0$.

Accordingly, we say that a horizontal curve $\gamma$ is {\em causal} (resp. {\em timelike}, {\em null}, {\em future-directed}) if the horizontal vector $\dot{\gamma}(t)$ is {\em causal} (resp. {\em timelike}, {\em null}, {\em future-directed}) for almost every $t$. For any two points $p,q \in \heis$, we say that $p$ {\em causally precedes} $q$, denoted by $p \leq q$, if \(p=q\) or there exists a future-directed causal curve $\gamma$ joining $p$ to $q$\footnote{Note that $q \leq q$ for any $q \in \mathbb{H}$, so that $q \in J^+(q)$ by definition.}. Analogously, we say that $p$ {\em chronologically precedes} $q$, denoted by $p \ll q$, if there exists a future-directed timelike curve joining $p$ to $q$. The {\em causal} and the {\em chronological future} of $A \subset \mathbb{H}$ are defined, respectively, as
\begin{equation}
  \label{eq:causalchronologicalfuturedefinition}
  J^+(A) \coloneqq \{ y \in \mathbb{H} \mid \exists x \in A,\ x \leq y \}, \ \text{ and } \ I^+(A) \coloneqq \{ y \in \mathbb{H} \mid \exists x \in A,\ x \ll y \}.
\end{equation}
Similarly, one can define the causal and chronological past $J^-(A)$ and $I^-(A)$. The {\em causal} and {\em chronological diamonds} of two sets $A, B \subseteq \mathbb{H}$ are given by
\begin{equation}
  \label{eq:causalchronologicaldiamonddefinition}
  J(A, B) \coloneqq J^+(A) \cap J^-(B), \ \text{ and } \ I(A, B) \coloneqq I^+(A) \cap I^-(B).
\end{equation}
We will also use the notation
\begin{equation}
  \label{eq:notationA2}
  A^2_{\leq} \coloneqq \{(x, y) \in A \times A \mid x \leq y \} , \ \text{ and } \ A^2_{\ll} \coloneqq \{(x, y) \in A \times A \mid x \ll y \}.
\end{equation}

The {\em Lorentzian length} of a future-directed causal curve $\gamma : I \to \heis$ is given by
\[
  L(\gamma) \coloneqq \int_I \sqrt{-g(\dot{\gamma}(t),\dot{\gamma}(t))} \, \diff t.
\]

This notion of length allows for the definition of the {\em time-separation function} $\uptau$ between two points $p,q \in \heis$:
\[
  \uptau(p,q) \coloneqq \sup \{ L(\gamma) \ | \ \gamma \ \text{is future-oriented causal, joining $p$ to $q$}\} \quad \text{ if } p \leq q,
\]
and $\uptau(p,q) \coloneqq 0$ if $p$ does not causally precede $q$. It is easy to show that the time-separation $\uptau$ satisfies the {\em reverse triangular inequality}, i.e.
\[
  \uptau(q_1, q_3) \geq \uptau(q_1, q_2) + \uptau(q_2, q_3) \qquad \text{ if } q_1 \leq q_2 \leq q_3 \text{ in } \heis.
\]
\begin{remark}\label{rmk:translations_preserve_geodesics}
  Left-translations are isometries, and therefore, they preserve the causal character of curves. Additionally, they also preserve the time orientation of vectors, as $X$ is left-invariant. In particular, this implies that for every points $p, q_1,q_2 \in \heis$, we have that
  \begin{equation}
    \label{eq:leqLeftTranslations}
    q_1 \leq q_2 \iff p \ast q_1 \leq p \ast q_2, \ \text{ and } \ q_1 \ll q_2 \iff p\ast q_1 \ll p\ast q_2.
  \end{equation}
  It also follows that $L(\gamma) = L(p\ast \gamma)$ for any future-directed causal curve $\gamma$, which immediately implies that $\uptau(q_1,q_2) = \uptau(p\ast q_1, p\ast q_2)$.

\end{remark}
We say that a future-directed causal curve $\gamma$ from $p$ to $q$ is a {\em maximizing geodesic} if $L(\gamma) = \uptau(p,q)$, namely that it maximizes the Lorentzian length among all future-directed causal curves joining $p$ to $q$.

\subsection{An isoperimetric characterization of geodesics}
\label{subsection:isopiffgeod}
We now aim to provide a characterization of geodesics in the sub-Lorentzian Heisenberg group as solutions to an isoperimetric problem in the Minkowski plane. This is analogous to what happens in the sub-Riemannian Heisenberg group, see for example \cite{Capogna2007}.

In what follows, we will denote by $\Min$ the two-dimensional Minkowski plane, i.e. the plane $\R^2$ equipped with the Lorentzian metric
\[
  \tilde g = - \diff x^2 + \diff y^2
\]
and time-direction given by the vector field $\partial/\partial x$. More precisely, a locally Lipschitz curve $\gamma = (x, y) : I \to \Min$ is causal and future-directed if, for almost every $t \in I$,
\[
  \tilde g(\dot \gamma(t), \dot \gamma (t)) = - \dot x(t)^2 + \dot y(t)^2 \leq 0 \ \text{ and } \ \tilde g(\dot \gamma(t), \partial/\partial x) = - \dot x(t) < 0.
\]
The Lorentzian length of such a curve is then given by
\[
  \tilde L(\gamma) \coloneqq \int_I \sqrt{-\tilde g(\dot{\gamma}(t),\dot{\gamma}(t))} \, \diff t = \int_I \sqrt{\dot x(t)^2 - \dot y(t)^2} \, \diff t,
\]
and it is well known that the corresponding time-separation function in $\Min$ is explicitely obtained as
\[
  \tilde \uptau((x_1, y_1), (x_2, y_2)) = \sqrt{(x_1-x_2)^2 - (y_1- y_2)^2}, \qquad \text{ for all } (x_1, y_1), (x_2, y_2) \in \Min.
\]
Finally, we denote by $\mathsf{p} : \heis \to \Min$ the projection defined as $\mathsf{p}(x, y, z) \coloneqq (x, y)$. Note that, for every future-directed causal curve $\gamma$ in $\heis$, the curve $\mathsf{p} \circ \gamma$ is a future-directed causal curve $\gamma$ in $\Min$ and
\begin{equation}
  \label{eq:pisanisometry}
  L(\gamma) = \tilde L(\mathsf{p} \circ \gamma).
\end{equation}

The following proposition describes the relationship between future-directed causal curves in the Heisenberg group and those in the Minkowski plane. Because of left invariance, it is enough to consider curves starting from the origin.

\begin{proposition}
  A curve $\gamma = (x, y, z) : [t_0, t_1] \to \heis$ is future-directed and causal with $\gamma(t_0) = e$ if and only if the curve $\tilde \gamma \colon [t_0, t_1] \to \Min$, defined by setting $\tilde \gamma := \mathsf{p} \circ \gamma$, is future-directed and causal with $\tilde \gamma(t_0) = 0$ and
  \begin{equation}
    \label{eq:z(t)issignedarea}
    z(t) = \frac{1}{2} \int_{t_0}^t \left( x(s) \dot y(s) - \dot x(s) y(s) \right) \diff s, \qquad \text{ for almost every } t \in I.
  \end{equation}
  Geometrically, the quantity $z(t)$ is the signed area enclosed by $\tilde \gamma$ from $t_0$ to $t$ and the segment joining $\tilde \gamma(t_0)$ to $\tilde \gamma(t)$, see \cref{fig:horizontality_condition}.
\end{proposition}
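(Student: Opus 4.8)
The plan is to pass to coordinates, where horizontality becomes a single scalar ODE for the $z$-component, and then to observe that the projection $\mathsf{p}$ carries the Lorentzian form on $\Delta$ isometrically onto the Minkowski form. First I would expand the horizontality condition \eqref{eq:horizontalcurveinH} componentwise: substituting $X = \pder{}{x} - \frac{y}{2}\pder{}{z}$ and $Y = \pder{}{y} + \frac{x}{2}\pder{}{z}$ into $\dot\gamma = uX + vY$ yields $\dot x = u$, $\dot y = v$, and $\dot z = \frac{1}{2}(x\dot y - y\dot x)$. Hence a curve with $\gamma(t_0) = e$ is horizontal if and only if its planar part $(x,y)$ is absolutely continuous and its $z$-component is recovered by integrating the last relation from $t_0$ with $z(t_0) = 0$, which is exactly formula \eqref{eq:z(t)issignedarea}.

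Next I would match the causal data of $\gamma$ and $\tilde\gamma = \mathsf{p}\circ\gamma$. Writing $\dot\gamma = \dot x\,X + \dot y\,Y$ and using $g(X,X) = -1$, $g(X,Y) = 0$, $g(Y,Y) = 1$, one gets $g(\dot\gamma,\dot\gamma) = -\dot x^2 + \dot y^2 = \tilde g(\dot{\tilde\gamma}, \dot{\tilde\gamma})$ and $g(\dot\gamma, X) = -\dot x = \tilde g(\dot{\tilde\gamma}, \pder{}{x})$. Thus $\dot\gamma(t)$ is causal (respectively future-directed) precisely when $\dot{\tilde\gamma}(t)$ is, so the two conditions agree pointwise. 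Both directions of the proposition then follow at once: a future-directed causal $\gamma$ with $\gamma(t_0) = e$ is horizontal by definition, so \eqref{eq:z(t)issignedarea} holds and $\tilde\gamma$ inherits the causal character with $\tilde\gamma(t_0) = \mathsf{p}(e) = 0$; conversely, given a future-directed causal $\tilde\gamma = (x,y)$ with $\tilde\gamma(t_0) = 0$, I would \emph{define} $z$ by \eqref{eq:z(t)issignedarea}, verify that $(x,y,z)$ solves the horizontality ODE, and conclude from the same quadratic-form identity that it is a future-directed causal curve through $e$.

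The substitution is immediate, so the only delicate points are regularity and the geometric interpretation. On the regularity side, a future-directed causal curve in $\Min$ is only assumed locally Lipschitz, but on the compact interval $[t_0, t_1]$ this forces $\dot x, \dot y \in L^\infty$; then the $z$ defined by the integral in \eqref{eq:z(t)issignedarea} is Lipschitz with $\dot z \in L^\infty$, so the lift $\gamma$ is absolutely continuous with controls $u = \dot x$, $v = \dot y$ in $L^\infty$, matching the definition of a horizontal curve. For the area statement I would apply Green's theorem to the closed loop formed by the arc $\tilde\gamma|_{[t_0,t]}$ and the chord from $\tilde\gamma(t_0) = 0$ to $\tilde\gamma(t)$: the enclosed signed area equals $\frac{1}{2}\oint (x\,\diff y - y\,\diff x)$, and the chord contributes nothing since it lies on a ray through the origin, leaving exactly $z(t)$. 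I expect this final bookkeeping---reconciling the regularity classes and confirming the signed-area interpretation---to be the only part requiring attention, the core equivalence being a one-line computation.
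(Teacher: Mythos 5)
Your proposal is correct and follows essentially the same route as the paper: expanding the horizontality condition to get the ODE $\dot z = \tfrac{1}{2}(x\dot y - y\dot x)$ (equivalently the integral formula), noting that $\mathsf{p}$ identifies the sub-Lorentzian form on $\Delta$ with the Minkowski form so that causal character and time-orientation match pointwise, lifting in the converse direction, and invoking Green's theorem (where, as you observe, the chord through the origin contributes nothing) for the area interpretation. The regularity reconciliation you flag is likewise handled the same way in the paper, so there is nothing to add.
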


\begin{remark}
  We define the lift of any curve $\alpha \colon [t_0, t_1] \to \Min$ starting from the origin as $\gamma(t) \coloneqq (\alpha(t), z(t))$, where $z(t)$ is given by \cref{eq:z(t)issignedarea}. Clearly, the lift of a curve in $\Min$ is a horizontal curve in $\heis$.
\end{remark}

\begin{proof}
  We notice, using \cref{eq:horizontalcurveinH}, that a curve $\gamma =(x, y, z) : I \to \heis$ is horizontal if and only if it is locally Lipschitz (in charts) and
  \[
    \dot z(t) =  \frac{1}{2}\left( x(t) \dot y(t) - \dot x(t) y(t) \right), \qquad \text{ for almost every } t \in I,
  \]
  which is equivalent to \cref{eq:z(t)issignedarea} by integration.
  To interpret \cref{eq:z(t)issignedarea} geometrically, we use Green's formula to obtain the signed area $A(t)$ enclosed by the concatenation of the curves $\tilde \gamma |_{[t_0, t]}$ and $\sigma(s) \coloneqq (1-s)\tilde{\gamma}(t)$ for $s \in [0, 1]$:
  \[
    A(t) =\frac{1}{2} \int_{t_0}^{t} (x(s)\dot y(s) - \dot x(s)y(s)) \diff s + \frac{1}{2} \int_{0}^1 \sigma_1(s)\dot{\sigma}_2(s) - \dot{\sigma}_1(s)\sigma_2(s) \diff s = z(t).
  \]
  Conversely, any curve $\tilde{\gamma} = (\tilde x, \tilde y): [t_0, t_1] \to M$ that starts from the origin and encloses a signed area $\tilde z(t)$ at time $t$ can be lifted to the curve $\gamma(t) \coloneqq (\tilde{\gamma}(t), \tilde z(t))$ in $\heis$. Clearly, this curve $\gamma$ starts from the origin and satisfies $\tilde \gamma = \mathsf{p} \circ \gamma$ as well as \cref{eq:z(t)issignedarea} by construction.

  The only remaining thing to observe is that the projection map $\mathsf{p}$ is an isometry on horizontal curves preserving time-orientation, i.e.,
  \[
    g(\dot \gamma(t), \dot \gamma(t)) = \tilde{g}(\dot{\tilde \gamma}(t), \dot{\tilde \gamma}(t)),\ \text{ and } \ g(\dot \gamma(t),  X) = \tilde{g}(\dot{\tilde \gamma}(t), \partial/\partial x),
  \]
  for almost every $t \in [t_0, t_1]$. Therefore, future-directed causal curves in $\heis$ are mapped to future-directed causal curves in $\Min$ and vice versa.
\end{proof}
\begin{figure}
  \centering
  \begin{tikzpicture}
    \begin{axis}[
        scale =1.5,
        xmin=-12, ymin=-12, xmax=15, ymax=12,
        view={120}{30},
        axis lines= middle,
        axis on top,
        ytick=\empty,
        xtick=\empty,
        ztick=\empty,
        enlargelimits=false,
        clip=false,
        compat=1.8
      ]
      \addplot3[red, domain=0:10, samples=100, samples y=0,] ({x},{0.25*x*(9-x)},{9/8*x^2 -x^3/12});
      \addplot3[blue, domain=0:10, samples=100, samples y=0,] ({x},{0.25*x*(9-x)},{0});
      \addplot3[name path = projection,blue, domain=0:8, samples=100, samples y=0,] ({x},{0.25*x*(9-x)},{0});
      \addplot3[only marks] coordinates {(0,0,0) (8,2,0) (8,2,88/3)};
      \draw[color=purple, dashed, name path= segment] (axis cs:0,0,0) -- (axis cs:8,2,0);
      \addplot3[cyan] fill between [of =segment and projection];
      \draw[color=purple, dashed, name path= segment] (axis cs:0,0,0) -- (axis cs:8,2,0);
      \draw[color=cyan, dashed] (axis cs:8,2,0) -- (axis cs:8,2,88/3);
      \node[color= cyan] at (axis cs:8,-2,15) {\(z(t) = A(t)\)};
      \node[label ={[xshift=-4em]\(\gamma(t) = (x(t),y(t),z(t))\)}] at (axis cs:8,2,88/3) {};
      \node[label ={[xshift=0.5em,yshift=-2em]\((x(t),y(t),0)\)}] at (axis cs:8,2,0) {};
      \node at (axis cs:4,2.5,0) {\(A(t)\)};
    \end{axis}
  \end{tikzpicture}
  \caption{\(z(t)\) equals the area enclosed by the projection of the curve \(\gamma\) on $\Min$ at time \(t\) and the segment joining the origin to the point $(x(t), y(t), 0)$.}
  \label{fig:horizontality_condition}
\end{figure}
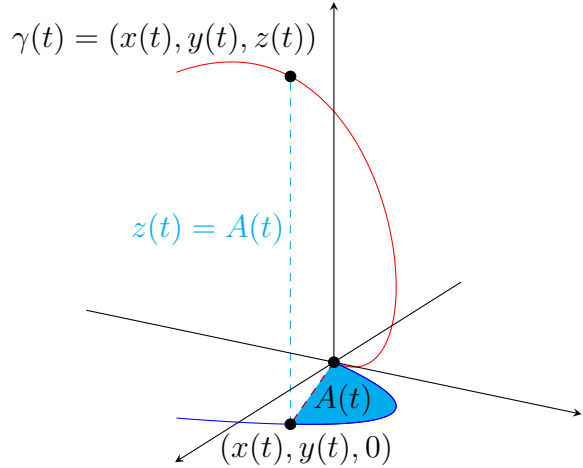
For brevity, we will say that a curve $\tilde \gamma$ in $\Min$ starting at the origin {\em encloses signed area} $z (t)$ at a fixed time $t$ if the concatenation of $\tilde \gamma$ with the segment joining $\tilde \gamma(t)$ to the origin encloses signed area $z(t)$.
As an immediate corollary of the previous proposition and \cref{eq:pisanisometry}, we obtain a characterization of geodesics in $\heis$ in terms of suitable curves in $\Min$.
\begin{corollary}
  \label{corollary:geodesicHiffisopinM}
  There is a bijective correspondence between maximizing geodesics in $\heis$ connecting the origin to a point $p = (x, y, z) \in \heis$, and future-directed
  causal curves in $\Min$ connecting the origin to $(x,y)$ that have maximal length among all the future-directed causal curves from the origin to $(x,y)$ enclosing signed area $z$.
\end{corollary}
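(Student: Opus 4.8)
The plan is to build the correspondence at the level of parametrised curves and then transport the notion of maximality across it using that $\mathsf{p}$ preserves Lorentzian length. Fix the target $p=(x,y,z)\in\heis$. The preceding proposition already does the bulk of the work: the projection $\gamma\mapsto\mathsf{p}\circ\gamma$ sends a future-directed causal curve in $\heis$ issuing from $e$ to a future-directed causal curve in $\Min$ issuing from the origin, while the lift $\tilde\gamma\mapsto(\tilde\gamma,z(\cdot))$ runs in the opposite direction, and these two operations are mutually inverse because the lift is uniquely determined by its projection. The first thing I would record is that under this correspondence the endpoint constraint in $\heis$ matches \emph{exactly} the endpoint-plus-area constraint in $\Min$: by \cref{eq:z(t)issignedarea} the last coordinate of $\gamma(t_1)$ equals the signed area enclosed by $\mathsf{p}\circ\gamma$, so $\gamma$ ends at $(x,y,z)$ if and only if $\mathsf{p}\circ\gamma$ ends at $(x,y)$ and encloses signed area $z$. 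Hence projection and lift restrict to a bijection between future-directed causal curves in $\heis$ from $e$ to $(x,y,z)$ and future-directed causal curves in $\Min$ from the origin to $(x,y)$ enclosing signed area $z$.

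Next I would transfer maximality. By \cref{eq:pisanisometry} this bijection is length-preserving, $L(\gamma)=\tilde L(\mathsf{p}\circ\gamma)$, so taking the supremum over the two matched families gives
\[
  \uptau(e,p)=\sup\{\,\tilde L(\tilde\gamma) : \tilde\gamma \text{ is future-directed causal from the origin to } (x,y) \text{ and encloses signed area } z \,\}.
\]
Because a curve and its partner have identical length, $\gamma$ attains $\uptau(e,p)$ — that is, $\gamma$ is a maximizing geodesic — precisely when its projection $\mathsf{p}\circ\gamma$ attains this common supremum within the constrained class in $\Min$. Consequently the bijection of the first step restricts to a bijection between maximizing geodesics from $e$ to $(x,y,z)$ and length-maximizers among the constrained planar curves, which is the assertion.

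I do not expect a genuine obstacle here, which is exactly why the statement is an immediate corollary. The one point deserving care is the verification that the two constraint sets correspond under projection and lift, i.e. that fixing the endpoint $(x,y,z)$ in $\heis$ is the same as simultaneously fixing the projected endpoint $(x,y)$ and the enclosed signed area $z$ in $\Min$; this is precisely the content of \cref{eq:z(t)issignedarea}. A minor bookkeeping remark concerns the degenerate case $e\not\leq p$, in which both families are empty and the bijection is the empty map, so there is nothing to prove. No analytic estimate or separate existence argument enters, since the correspondence is an exact length-preserving bijection rather than an inequality between suprema.
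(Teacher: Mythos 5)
Your proposal is correct and follows exactly the paper's route: the paper presents this as an immediate consequence of the preceding projection/lift proposition together with the length identity \cref{eq:pisanisometry}, which is precisely the two-step argument (constraint matching via \cref{eq:z(t)issignedarea}, then transfer of maximality through the length-preserving bijection) that you spell out.
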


The maximization problem in $\Min$ described in \cref{corollary:geodesicHiffisopinM}, which consists of finding the curves with maximal Lorentzian length among all future-directed causal curves connecting the origin to a given point $(x,y) \in \Min$ and enclosing a fixed signed area $z$, is a \emph{Lorentzian planar isoperimetric problem}, analogous to Dido's celebrated isoperimetric problem in the Euclidean plane, which we are going to address in the next section.

\subsection{Variational approach to the planar Lorentzian isoperimetric problem}
\label{subsection:variationalisopMink}

As established above, the geodesic problem in the sub-Lorentzian Heisenberg group admits a reduction to an isoperimetric problem in the Minkowski plane. For the reader’s convenience, we restate this problem here, suppressing the tilde notation on curves, since, throughout the present section, our analysis is restricted to $\Min$.

Let us fix $(a, b, c) \in \mathbb{R}^3$.  Among all the Lipschitz curves $\gamma \colon [t_0, t_1] \to \Min$ such that $ \gamma(t_0) = \mathbf{0}$, $\gamma(t_1) = (a, b)$, satisfying the causal condition
\begin{equation}\label{eq:causalityconditionM}
  \dot \gamma_1(t) \geq |\dot \gamma_2(t)| \quad \text{for a.e. } t \in [t_0, t_1],
\end{equation}
and subject to the following constraint on the signed area
\[
  A(\gamma) = \frac{1}{2} \int_{t_0}^{t_1} \left(\gamma_1(t)\dot{\gamma}_2(t) - \dot{\gamma}_1(t)\gamma_2(t)\right) \, \mathrm{d}t = c,
\]
we consider those curves that maximize the Minkowski length
\begin{equation}\label{eq:DidoMinkowskian}
  L(\gamma) = \int_{t_0}^{t_1} \sqrt{\dot{\gamma}_1(t)^2 - \dot{\gamma}_2(t)^2} \, \mathrm{d}t.
\end{equation}

Such curves are future-directed and causal in $\Min$, and therefore remain at all times within the causal future of the origin, namely within the region $\{x \geq |y|\}$. It follows that a necessary condition for the existence of at least one curve of this type is $a \geq |b|$. Moreover, it is not restrictive to assume that all curves are parametrized on the interval $[0,1]$, since both the Minkowski length and the signed area are invariant under reparametrization.

The following theorem provides the solution to the \emph{Minkowskian isoperimetric problem} stated above, which can be regarded as a Lorentzian analogue of the classical Dido's isoperimetric problem.
\begin{theorem}\label{thm:isoperimetric_solution}
  Consider any point $(a,b,c) \in \mathbb{R}^3$, and define the set
  \[
    \mathcal{A} \coloneqq \left\{ \alpha : [0,1] \to \Min \mid \alpha \text{ future-directed, causal,} \ \alpha(0)=\mathbf{0}, \ \alpha(1) = (a,b), \ A(\alpha)=c \right\}.
  \]
  Then, the set $\mathcal{A}$ is non-empty if and only if
  \[
    -a^2 + b^2 + 4\abs{c} \leq 0 \quad \text{and} \quad a > 0.
  \]
  In this case, there exists a unique (up to reparametrization) curve $\gamma \in \mathcal{A}$ such that
  \begin{equation}
    \label{eq:LorentzianIsopProb}
    L(\gamma) = \max_{\alpha \in \mathcal{A}} L(\alpha).
  \end{equation}
  In particular, depending on the point $(a, b , c) \in \R^3$, $\gamma$ takes the following form:
  \begin{enumerate}[label=\normalfont(\roman*), topsep=4pt,itemsep=4pt,partopsep=4pt, parsep=4pt]
    \item If $c = 0$, then $\gamma$ is a straight line from $\mathbf{0}$ to $(a,b)$. Moreover, such a curve is timelike.
    \item If $c \neq 0$ and $-a^2 + b^2 + 4\abs{c} = 0$, then $\gamma$ is a broken line with a single break point, joining $\mathbf{0}$ to $(a,b)$. Moreover, such a curve is null;
    \item If $c \neq 0$ and $-a^2 + b^2 + 4\abs{c} < 0$, then $\gamma$ is an arc of a hyperbola from $\mathbf{0}$ to $(a,b)$. Moreover, such a curve is timelike.
  \end{enumerate}
\end{theorem}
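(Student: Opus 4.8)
The plan is to pass to null (light-cone) coordinates, in which causality becomes monotonicity, and thereby reduce the constrained length maximization to a one-dimensional \emph{concave} variational problem that can be solved explicitly and transferred back.

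First I would set $u = x+y$ and $v = x-y$. A direct computation gives $-\diff x^2+\diff y^2 = -\diff u\,\diff v$, so a Lipschitz curve is future-directed causal if and only if both $u$ and $v$ are nondecreasing, and the length becomes $L(\gamma)=\int_0^1\sqrt{\dot u\,\dot v}\,\diff t$. The endpoints are $(0,0)\mapsto(U,V)$ with $U=a+b$ and $V=a-b$; requiring $U,V\ge 0$ already yields $a\ge\abs{b}$, and $a>0$ (a causal curve is nonconstant, so $\int\dot u=U$ and $\int\dot v=V$ cannot both vanish). A short calculation also rewrites the signed area as $A(\gamma)=\tfrac14\int_0^1(v\dot u-u\dot v)\,\diff t$. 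On the part where $u$ is strictly increasing I would represent the curve as a graph $v=f(u)$ with $f$ nondecreasing, $f(0)=0$, $f(U)=V$; then $L=\int_0^U\sqrt{f'(u)}\,\diff u$ and, after one integration by parts, the area constraint becomes the \emph{linear} condition $\int_0^U f\,\diff u = I$ with $I:=2c+\tfrac12(a^2-b^2)$. Since a nondecreasing $f$ from $0$ to $V$ satisfies $0\le\int_0^U f\le UV=a^2-b^2$, the admissible set is nonempty precisely when $0\le I\le a^2-b^2$, which is exactly $-a^2+b^2+4\abs{c}\le 0$; this settles the nonemptiness equivalence.

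Next I would solve the scalar problem. The functional $f\mapsto\int_0^U\sqrt{f'}$ is strictly concave in $f'$ and the constraint is linear, so there is at most one maximizer up to reparametrization. Its Euler--Lagrange equation with multiplier reads $\tfrac{1}{2\sqrt{f'}}=\mu u+\nu$, i.e.\ $f'(u)=\tfrac14(\mu u+\nu)^{-2}$; integrating shows $v=f(u)$ is an arc of a hyperbola (a straight line when $\mu=0$), which in the $(x,y)$ plane is again a hyperbola, respectively a line. I would fix the candidate $f_0$ by matching $f_0(0)=0$, $f_0(U)=V$ and $\int_0^U f_0=I$, and then prove it is the global maximizer by the tangent-line inequality for the concave square root: $\sqrt{f'}\le\sqrt{f_0'}+(\mu u+\nu)(f'-f_0')$. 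Integrating and using that every competitor shares the same endpoints and the same value of $\int f$, the correction term vanishes after integration by parts, so $\int\sqrt{f'}\le\int\sqrt{f_0'}$, with equality (by strict concavity) only when $f'=f_0'$ a.e.; this simultaneously yields maximality and uniqueness. The cases $I\in\{0,a^2-b^2\}$ force $f\equiv 0$ or $f\equiv V$ a.e., i.e.\ the broken null line of case (ii), while $c=0$ gives $\mu=0$ and the straight line of case (i); finally I would compute $g(\dot\gamma,\dot\gamma)$ along each candidate to read off the timelike/null classification.

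The step I expect to be the genuine obstacle is the rigorous treatment of the \emph{graph reduction and its degeneracies}. A general competitor need not be a graph: on intervals where $\dot u=0$ (a null direction) the coordinate $u$ is constant while $v$ increases, so $f$ is only a nondecreasing $\mathrm{BV}$ function with possible jumps, and the length detects only the absolutely continuous part $\int_0^U\sqrt{f'_{ac}}\,\diff u$ (jumps and flat parts are null and contribute nothing). I must verify that passing to the graph does not lower the supremum and that the tangent-line argument survives with $f'_{ac}$, which requires $f_0'>0$ throughout in the interior (timelike) case. The second delicate point is the explicit solution of the boundary-plus-area system for $(\mu,\nu)$: showing it is solvable with $\mu u+\nu\neq 0$ on $[0,U]$ for every $I\in(0,a^2-b^2)$, most cleanly via a monotone dependence of $c$ on the hyperbola's curvature parameter. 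I would also separately handle the light-cone endpoints $a=\abs{b}$, where $U$ or $V$ vanishes and the ``straight line'' of case (i) degenerates to a null segment.
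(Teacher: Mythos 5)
Your proposal is correct and reaches the theorem by a genuinely different route than the paper. The paper applies the boost $R$ sending $(a,b)$ to $(T,0)=(\sqrt{a^2-b^2},0)$, so that every competitor becomes the graph of a $1$-Lipschitz function $f$ over the time axis with $L=\int_0^T\sqrt{1-\dot f^2}$ and area $\int_0^T f$; since the two-sided constraint $\Lip(f)\le 1$ is not stable under variations, it inserts a relaxation lemma (the supremum over $\Lip(f)<1$ equals that over $\mathcal F$), derives the Euler--Lagrange equation there, and obtains global maximality and uniqueness from strict concavity of $s\mapsto L(f+s(g-f))$ together with Lagrange's remainder. Your null-coordinate change instead turns causality into the one-sided condition that $u,v$ be nondecreasing, the length into the \emph{concave} functional $\int_0^U\sqrt{f'}$, and the area into the affine constraint $\int_0^U f=I$; this makes the nonemptiness criterion $0\le I\le UV$ immediate and lets the tangent-line inequality for the square root deliver maximality and uniqueness in one stroke, with no relaxation lemma at all. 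The point you should make explicit, because it is the crux of why your argument also covers non-graphical competitors, is the sign of the multiplier: after Stieltjes integration by parts every competitor has the same value of $\int_0^U(\mu u+\nu)\,\diff f$ (it depends only on the endpoints and on $\int f=I$), while the length sees only the absolutely continuous part, so
\[
  L(f)\;\le\; L(f_0)-\int_0^U(\mu u+\nu)\,\diff f_{\mathrm{sing}}\;\le\; L(f_0),
\]
precisely because $\mu u+\nu=1/(2\sqrt{f_0'})>0$ on $[0,U]$; equality then forces $f_{\mathrm{sing}}=0$ and $f'=f_0'$ a.e., giving uniqueness among all causal curves, jumps and vertical segments included. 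The price of your route is the BV bookkeeping you correctly flagged (graph completion, chain rule on $\{\dot u>0\}$, the fact that $u(\{\dot u=0\})$ is Lebesgue-null), which the paper avoids since $1$-Lipschitz graphs have no singular part. Finally, both routes need the same last ingredient, which you also isolated: a monotonicity/intermediate-value argument producing, for each $I\in(0,UV)$, the unique parameters $(\mu,\nu)$ (the paper's vertex $y_C$) matching the boundary and area data; your one-parameter family obtained after imposing $f(0)=0$, $f(U)=V$ admits exactly the argument of the paper's lemma on the existence of the hyperbola, so this remaining gap is fillable as you describe.
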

The rest of this section is devoted to the proof of \cref{thm:isoperimetric_solution}, through a series of intermediate results. The main argument relies on a reduction to a variational problem. We begin by considering the trivial case where $a = \abs{b}$, or equivalently $a^2 = b^2$. The following proposition asserts that the only future-directed causal curve from the origin to $(|b|,b)$ is a straight line segment.
\begin{proposition}\label{prop:lightlike_case}
  Suppose that $a = |b|$. Then there exists a unique future-directed causal curve joining $\mathbf{0}$ to $(a,b)$, namely the null straight line connecting these points. In this case, the Lorentzian isoperimetric problem admits a solution if and only if $c = 0$, and this solution is precisely the straight line.

\end{proposition}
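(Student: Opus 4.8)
The plan is to exploit the rigidity of causal curves on the light cone by passing to \emph{null coordinates}. For a candidate curve $\gamma = (\gamma_1, \gamma_2)$, set $u \coloneqq \gamma_1 - \gamma_2$ and $v \coloneqq \gamma_1 + \gamma_2$. In these coordinates the causality condition \eqref{eq:causalityconditionM}, namely $\dot\gamma_1 \geq \abs{\dot\gamma_2}$ a.e., is exactly equivalent to $\dot u \geq 0$ and $\dot v \geq 0$ a.e. Thus \emph{both} null coordinates are non-decreasing along any future-directed causal curve. This single structural fact drives the whole argument.

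Next I would observe that the endpoint lies on a null ray through the origin. Writing $(a,b)$ with $a = \abs{b}$, its $u$-coordinate is $a - b$ and its $v$-coordinate is $a+b$; if $b \geq 0$ the former vanishes, and if $b \leq 0$ the latter vanishes. Consider first the case $b \geq 0$. Since $u$ is non-decreasing with $u(0) = 0$ (the curve starts at $\mathbf{0}$) and $u(1) = a - b = 0$, it follows that $u \equiv 0$, i.e.\ $\gamma_2 \equiv \gamma_1$. The causality bound $\dot\gamma_1 \geq \abs{\dot\gamma_2} = \abs{\dot\gamma_1}$ then forces $\dot\gamma_1 \geq 0$, so $\gamma_1$ is a non-decreasing continuous surjection of $[0,1]$ onto $[0,a]$ and the image of $\gamma$ is precisely the null segment $\{(s,s) : 0 \leq s \leq a\}$. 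Hence, up to reparametrization, $\gamma$ is the null straight line from $\mathbf{0}$ to $(a,b)$. The case $b \leq 0$ is identical, with $v$ in place of $u$ and the ray $\{y = -x\}$. I would also record here that the existence of a future-directed causal curve already forces $a > 0$: indeed $a = \gamma_1(1) = \int_0^1 \dot\gamma_1 \geq 0$ since $\dot\gamma_1 \geq \abs{\dot\gamma_2} \geq 0$, and equality $a=0$ would make $\dot\gamma_1 = 0$ a.e., contradicting future-directedness; so the degenerate point $a = \abs{b} = 0$ admits no curve.

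Finally I would settle the area constraint. Because every admissible curve lies on a straight line through the origin, $\gamma_2 = \pm\gamma_1$, the signed-area integrand vanishes pointwise: $\gamma_1\dot\gamma_2 - \dot\gamma_1\gamma_2 = \pm(\gamma_1\dot\gamma_1 - \dot\gamma_1\gamma_1) = 0$, whence $A(\gamma) = 0$ for all such curves. Consequently $\mathcal{A}$ is non-empty precisely when $c = 0$, and in that case the null straight line is its unique element (up to reparametrization), so it is trivially the maximizer in \eqref{eq:LorentzianIsopProb}.

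I do not expect a genuine obstacle here, as the proof is a monotonicity/rigidity observation rather than a variational computation. The two points meriting care are the reduction of \eqref{eq:causalityconditionM} to monotonicity of the two null coordinates, and the precise sense of uniqueness: a priori a future-directed causal curve need not be injective, but once $\gamma_2 = \pm\gamma_1$ and $\dot\gamma_1 \geq 0$ are established, its image is forced to be the entire segment, which is exactly what the statement asserts (uniqueness up to reparametrization).
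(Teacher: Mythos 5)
Your proof is correct and takes essentially the same approach as the paper's: both exploit the equality case of the integrated causality condition to force any admissible curve onto the null line through the origin, and then note that the signed area of such a curve automatically vanishes, so $\mathcal{A} \neq \emptyset$ iff $c = 0$. Your null-coordinate formulation (monotonicity of $u = \gamma_1 - \gamma_2$ and $v = \gamma_1 + \gamma_2$, with one of them pinned to zero at both endpoints) is just a clean repackaging of the paper's argument that the equalities force $\dot\gamma_1 = \abs{\dot\gamma_2} = \epsilon \dot\gamma_2$ a.e.\ for a constant sign $\epsilon$, whence $\gamma_1 = \epsilon\gamma_2$.
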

\begin{proof}
  We start by noting that the null straight segment $t \in [0, 1] \mapsto (ta,tb)$ is a future-directed causal curve joining the origin to $(a,b)$. Suppose now that $\gamma$ is any future-directed causal curve from $\mathbf{0}$ to $(a,b)$. Integrating the causality condition \eqref{eq:causalityconditionM} gives
  \[
    a=\gamma_1(1) = \int_0^1 \dot{\gamma}_1(t) \diff t \geq \int_0^1 \abs{\dot{\gamma}_2(t)} \diff t \geq \abs{\int_0^1 \dot{\gamma}_2(t) \diff t} = \abs{\gamma_2(1)} = \abs{b}.
  \]
  Since we are assuming $a = \abs{b}$, these inequalities are actually equalities. In particular, $\dot{\gamma}_2$ has a constant sign almost everywhere, which we denote by $\epsilon \in\{-1,1\}$. Now, the equality
  \[
    \int_0^1 \left(\dot{\gamma}_1(t) - \abs{\dot{\gamma}_2(t)}\right) \diff t = 0,
  \]
  combined with \eqref{eq:causalityconditionM}, yields that
  \[
    \dot{\gamma}_1(t) = \abs{\dot{\gamma}_2(t)} = \epsilon \dot{\gamma}_2(t), \quad \text{ for a.e. } t \in [0, 1].
  \]
  Integrating this condition on $[0,t]$ gives that
  \[
    \gamma_1(t) = \epsilon \gamma_2(t), \quad \text{ for every } t \in [0, 1].
  \]
  It follows that the curve $\gamma$ parametrizes the line $y = \epsilon x$;
  moreover, since $L(\gamma) = A(\gamma) = 0$, the proof is complete.
\end{proof}

From now on, we assume that $a > \abs{b}$. The following proposition shows that it suffices to consider curves that are graphs of Lipschitz functions vanishing at their endpoints. To this end, we introduce the linear map $R \colon \R^2 \to \R^2$ defined by the matrix
\begin{equation}
  \label{eq:matrixLorentzRotation}
  R :=
  \begin{pmatrix}
    \dfrac{a}{\sqrt{a^2-b^2}} & -\dfrac{b}{\sqrt{a^2-b^2}}\\
    -\dfrac{b}{\sqrt{a^2-b^2}} & \dfrac{a}{\sqrt{a^2-b^2}}
  \end{pmatrix}.
\end{equation}
Observe that $R$ is a future-preserving Lorentzian isometry of the Minkowski plane, which sends $(a,b)$ to $(\sqrt{a^2 - b^2},0)$. For a given $(a, b)$, $a > \abs{b}$ , we shall usually adopt the notation $T \coloneqq \sqrt{a^2 - b^2}$.

\begin{proposition}
  \label{prop:MinkowskiToGraphical}
  Let $(a,b) \in \Min$ with $a > \abs{b}$, and let $\gamma \colon [0,1] \to \Min$ be a future-directed causal curve connecting $\mathbf{0}$ to $(a,b)$. Then the image of the curve $R \circ \gamma$, which is future-directed, has the same Minkowski length and encloses the same area as $\gamma$, and connects $\mathbf{0}$ to $(T,0)$, is realized as the graph of a function $f \colon [0,T] \to \R$ satisfying $\Lip(f) \leq 1$.
\end{proposition}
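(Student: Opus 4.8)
The plan is to exploit that $R$ in \eqref{eq:matrixLorentzRotation} is an extremely rigid map, namely a unit-determinant, future-preserving Lorentzian isometry, and to read off all but the last assertion directly from this. First I would record three algebraic facts about $R$. Writing $\eta = \operatorname{diag}(-1,1)$ for the matrix of $\tilde g$, a one-line computation gives $R^{\top}\eta R = \eta$, so $R$ preserves the Minkowski quadratic form and hence $L(R\circ\gamma) = L(\gamma)$. Next, $\det R = (a^2-b^2)/(a^2-b^2) = 1$, and for a linear map the integrand $\gamma_1\dot\gamma_2 - \dot\gamma_1\gamma_2$ transforms exactly by the factor $\det R$; therefore $A(R\circ\gamma) = A(\gamma)$. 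Finally, for a future-directed causal vector $v = (v_1,v_2)$ (so $v_1 \geq \abs{v_2}$ and $v_1 > 0$) one computes $(Rv)_1 = (a v_1 - b v_2)/T \geq (a - \abs{b})v_1/T > 0$, using $a > \abs{b}$, so $R$ is future-preserving and $R\circ\gamma$ is again future-directed and causal. Combined with $R\mathbf{0} = \mathbf{0}$ and $R(a,b) = (T,0)$, this settles every claim except the graph representation.

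For the graph statement, I would write $\beta = R\circ\gamma = (\beta_1,\beta_2)$, which is Lipschitz (as the composition of a Lipschitz curve with a linear map) and, being future-directed causal, satisfies the condition \eqref{eq:causalityconditionM}, that is $\dot\beta_1 \geq \abs{\dot\beta_2} \geq 0$ almost everywhere. In particular $\beta_1$ is non-decreasing, and by continuity its image is exactly $[0,T]$. The decisive observation is that on any subinterval where $\beta_1$ is constant one has $\dot\beta_1 = 0$ almost everywhere, which forces $\dot\beta_2 = 0$ there as well, so $\beta_2$ is constant too. Consequently the prescription $f(s) \coloneqq \beta_2(t)$, for any $t$ with $\beta_1(t) = s$, is independent of the chosen preimage, defines a function $f\colon [0,T] \to \R$, and its graph is precisely the image of $\beta$.

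It then remains to bound $\Lip(f)$. Given $0 \leq s < s' \leq T$, I pick preimages $\beta_1(t) = s$ and $\beta_1(t') = s'$; since $\beta_1$ is non-decreasing one necessarily has $t < t'$, and then
\[
  \abs{f(s') - f(s)} = \abs{\int_t^{t'} \dot\beta_2(r)\,\diff r} \leq \int_t^{t'} \abs{\dot\beta_2(r)}\,\diff r \leq \int_t^{t'} \dot\beta_1(r)\,\diff r = s' - s,
\]
so that $\Lip(f) \leq 1$, as desired.

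The main obstacle I anticipate is not any single estimate but the well-definedness of $f$: the map $\beta_1$ need not be strictly increasing, so a priori a value $s$ may have many preimages, and without further input the curve could fail to be a graph. The causal condition is exactly what rescues this, through the implication $\dot\beta_1 = 0 \Rightarrow \dot\beta_2 = 0$; this is the one place where the Lorentzian nature of the hypothesis, rather than mere continuity, is genuinely used, and I would isolate it cleanly before constructing $f$. Everything else reduces to the isometry bookkeeping for $R$ carried out in the first step.
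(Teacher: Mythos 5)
Your proposal is correct, and its first half --- the bookkeeping for $R$ ($R^{\top}\eta R = \eta$, $\det R = 1$, future-preservation, $R(a,b)=(T,0)$) --- coincides with the paper's proof. The genuine point of divergence is how the graph property is established. The paper invokes the strict form of future-directedness, $\dot{\bar{\gamma}}_1(t) > 0$ for almost every $t$, to conclude that $s(t) = \bar{\gamma}_1(t)$ is a homeomorphism of $[0,1]$ onto $[0,T]$, and then simply reparametrizes the curve by $s$; the graph property is then automatic and no well-definedness issue ever arises. You instead work only with the weaker causal inequality $\dot{\beta}_1 \geq \abs{\dot{\beta}_2}$, which permits $\beta_1$ to stall on subintervals, and you resolve the resulting well-definedness problem through the implication $\dot{\beta}_1 = 0 \Rightarrow \dot{\beta}_2 = 0$, which forces $\beta_2$ to be constant on each plateau of $\beta_1$. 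Your version is therefore slightly more robust: it applies verbatim to curves that merely satisfy \eqref{eq:causalityconditionM} --- the hypothesis used in the formulation of the variational problem at the start of this section --- whereas the paper's inversion step is only available under the strict definition of future-directed (velocity nonvanishing almost everywhere), as given in \cref{subsection:isopiffgeod}. What the paper's route buys in exchange is brevity: strict monotonicity makes $\beta_1$ invertible outright, so the graph map is just $\bar{\gamma} \circ \beta_1^{-1}$ with no case analysis. The final Lipschitz estimate is the same integral computation in both arguments.
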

\begin{proof}
  Observe that since $a > \abs{b}$, the point $(a,b)$ lies in the chronological future of the origin, i.e. in the set $\{x > \abs{y}\}$.

  The fact that $R$ is a future-preserving Lorentzian isometry ensures that the curve $\bar{\gamma} \coloneqq R \circ \gamma$ is a future-directed causal curve with the same length as $\gamma$, connecting the origin to the point $(T,0)$. Since $\det(R)=1$, it is also immediate to see that $\gamma$ and $\bar{\gamma}$ enclose the same area, i.e. $A(\gamma) = A(\bar \gamma)$.

  We now show that the image of $\bar \gamma$ is the graph of a function.
  Since $\bar \gamma$ is future-directed, it holds that
  \[
    \dot{\bar{\gamma}}_1(t) > 0 \qquad \text{for a.e.} \ t \in [0,1],
  \]
  and therefore the function
  \[
    s(t) \coloneqq \int_0^t \dot{\bar{\gamma}}_1 = \bar{\gamma}_1(t)
  \]
  is a homeomorphism from $[0,1]$ to $[0,T]$.
  Notice that both $s(t)$ and its inverse $t(s)$ are increasing functions, and thus the parametrization $\alpha(s) \coloneqq \bar{\gamma}(t(s))$ is well defined. Observe that this parametrization is a graph of a function, since
  \begin{equation}\label{eq:graphical_parametrization}
    \alpha_1(s) := \bar{\gamma}_1(t(s)) = s(t(s))= s
  \end{equation}
  We now denote the second component $\alpha_2(s) := \bar{\gamma}_2(t(s))$ by $f(s)$. Because $\bar{\gamma}$ is causal, we have that
  \[
    \bar{\gamma}_1(t_2) - \bar{\gamma}_1(t_1) = \int_{t_1}^{t_2} \dot{\bar{\gamma}}_1(t) \, dt \geq \int_{t_1}^{t_2} \abs{\dot{\bar{\gamma}}_2(t)} \, dt \geq \abs{\int_{t_1}^{t_2} \dot{\bar{\gamma}}_2(t) \, dt} = \abs{\gamma_2(t_2) - \gamma_2(t_1)},
  \]
  for any $t_1 < t_2$, while for $s_1 < s_2$, we have that $t(s_1) < t(s_2)$, since $t(s)$ is increasing, and we finally obtain
  \[
    \abs{f(s_2) -f(s_1)} = \abs{\bar{\gamma}_2(t(s_2)) - \bar{\gamma}_2(t(s_1))} \leq \bar{\gamma}_1(t(s_2)) - \bar{\gamma}_1(t(s_1)) = s_2-s_1,
  \]
  where the last equality follows from \eqref{eq:graphical_parametrization}.

  This shows that $f$ is a Lipschitz function with Lipschitz constant bounded by 1. Moreover, we have $f(0) =f(T) = 0$, since $\bar{\gamma}_2(0) = \bar{\gamma}_2(1)=0$.
\end{proof}

In the following, with a slight abuse of notation, we denote by $A(f)$ the signed area enclosed by the curve $\gamma(t) = (t, f(t))$, which is graph of the function $f$, i.e.
\[
  A(f) = A(\gamma) \coloneqq \int_0^T f(t) \, \diff t.
\]
The previous proposition, together with the fact that $R$ is a measure-preserving Lorentzian isometry, shows that the original Lorentzian isoperimetric problem in \eqref{eq:DidoMinkowskian} is equivalent to the following: considering the set
\[
  \mathcal{F} \coloneqq \left\{ f \in \Lip([0,T], \R) \ \colon \ f(0) = f(T) = 0, \ \int_0^T f(t) \diff t = c, \ \Lip(f) \leq 1 \right\},
\]
maximize the quantity
\begin{equation}\label{eq:graphical_length}
  L(f) = L(\gamma) \coloneqq \int_0^T \sqrt{1 - \dot{f}(t)^2} \diff t
\end{equation}
over all functions $f \in \mathcal{F}$.

We now address the first claim of \cref{thm:isoperimetric_solution}, namely the necessary and sufficient conditions on the set $\mathcal{A}$ (or  equivalently $\mathcal{F}$) to be non-empty.
\begin{proposition}\label{prop:future_set_origin}
  The set $\mathcal{F}$ is non-empty if and only if $-a^2+b^2+4\abs{c} \leq 0$.
\end{proposition}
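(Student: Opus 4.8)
The plan is to recast the non-emptiness of $\mathcal{F}$ as a statement about the range of the signed-area functional $f \mapsto \int_0^T f(t)\,\diff t$ over the class of $1$-Lipschitz functions on $[0,T]$ vanishing at both endpoints. Since we are in the regime $a > \abs{b}$, we have $T = \sqrt{a^2 - b^2} > 0$, and the target inequality $-a^2 + b^2 + 4\abs{c} \le 0$ is precisely $\abs{c} \le T^2/4$. Thus the proposition is equivalent to the claim that the set of attainable signed areas is exactly the interval $[-T^2/4, T^2/4]$, and I would split the argument into a sharp upper bound (the ``only if'') and an explicit construction (the ``if'').

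For the upper bound, I would exploit that the constraints $\Lip(f) \le 1$ and $f(0) = f(T) = 0$ force a pointwise envelope: for every $t \in [0,T]$ one has $\abs{f(t)} = \abs{f(t) - f(0)} \le t$ and likewise $\abs{f(t)} = \abs{f(t) - f(T)} \le T - t$, whence $\abs{f(t)} \le \min(t, T-t)$. Integrating and computing $\int_0^T \min(t, T-t)\,\diff t = T^2/4$ yields $\abs{\int_0^T f(t)\,\diff t} \le T^2/4$, so any $f \in \mathcal{F}$ forces $\abs{c} \le T^2/4$.

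For the converse, I would observe that the ``tent'' function $g(t) \coloneqq \min(t, T-t)$ is itself $1$-Lipschitz, vanishes at the endpoints, and saturates the bound with $\int_0^T g(t)\,\diff t = T^2/4$. Given any $c$ with $\abs{c} \le T^2/4$, the rescaled function $f \coloneqq (4c/T^2)\,g$ has Lipschitz constant $\abs{4c/T^2} \le 1$, vanishes at the endpoints, and encloses signed area exactly $c$; hence $f \in \mathcal{F}$. One could equally invoke the intermediate value theorem along the family $\lambda \mapsto \lambda g$, whose areas sweep out $[-T^2/4, T^2/4]$ continuously, but the explicit rescaling is cleaner.

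I do not anticipate a serious obstacle here: the argument is elementary once the sharp envelope $\abs{f(t)} \le \min(t, T-t)$ is identified. The only points requiring mild care are the sign bookkeeping hidden in the absolute value $\abs{c}$, which is handled by the reflection symmetry $f \mapsto -f$, and the final translation of $\abs{c} \le T^2/4$ back into the stated form through the identity $T^2 = a^2 - b^2$.
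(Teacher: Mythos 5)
Your proposal is correct and is essentially the paper's own proof: the same envelope $\abs{f(t)} \le \min(t, T-t)$ obtained from the Lipschitz and boundary conditions gives the upper bound, and your rescaled tent function $(4c/T^2)\min(t,T-t) = \frac{2c}{T} - \frac{4c}{T^2}\abs{t - \frac{T}{2}}$ is exactly the explicit witness the paper constructs for the converse.
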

\begin{proof}
  Suppose that there exists a function $f \in \mathcal{F}$. Since $f$ satisfies $\Lip(f) \leq 1$, its graph lies within the cone $\{ \abs{y - f(x_0)} \leq \abs{x - x_0} \}$ for any $x_0 \in [0, T]$. In particular, taking $x_0 = 0$ and $x_0 = T$ this implies that for the graph of $f$, $\operatorname{graph}(f)$, the following inclusion holds
  \begin{equation}
    \operatorname{graph}(f) \subseteq \label{eq:intersectiontwocones}
    \{ \abs{y} \leq \abs{x} \} \cap \{ \abs{y} \leq \abs{x - T} \},
  \end{equation}
  which is equivalent to say that
  \begin{equation}
    \label{eq:graphinclusion}
    \operatorname{graph}(f) \subseteq \{ \abs{y} \leq \min(\abs{x}, \abs{x - T}) \}.
  \end{equation}
  Recalling the condition $A(f) = c$, we observe that
  \begin{equation}\label{eq:area_constraint_estimates}
    \abs{c} = \abs{\int_0^T f(t) \diff t} \leq \int_0^T \abs{f(t)} \diff t \leq \int_0^T \min(\abs{t}, \abs{t - T}) \diff t = \frac{T^2}{4}.
  \end{equation}
  Since $T = \sqrt{a^2-b^2}$, the previous condition can be rewritten as $-a^2+b^2+4\abs{c} \leq 0$.

  Conversely, assume that $-a^2 + b^2 + 4\abs{c} \leq 0$, that is, $4\abs{c}/T^2 \leq 1$. Consider the function $f \colon [0,T] \to M$ defined as
  \[
    f(x) =\frac{2c}{T} -\frac{4c}{T^2}\abs{x-\frac{T}{2}}.
  \]
  We illustrate this function in \cref{fig:functionbrokenline}. It is easy to verify that $f$ is a $4\abs{c}/T^2$-Lipschitz function, so that $\Lip(f) \leq 1$. Moreover, it is also immediate to see that $f(0)=f(T)=0$ and that $A(f) = c$, hence $f \in \mathcal{F}$.
\end{proof}
\begin{figure}
  \centering
  \begin{tikzpicture}
    \begin{axis}[
        axis lines = middle,
        xtick = \empty,
        ytick = \empty,
        xmax = 1.3,
        ymax = 1,
        xmin = -0.5,
        ymin = -0.5,
        enlargelimits = false,
        clip = false
      ]
      \addplot+ [sharp plot, color=blue, mark options={fill=blue}] coordinates {(0,0) (0.5,0.5) (1,0) }
      node at (axis cs:-0.15,0.1) {\((0,0)\)}
      node at (axis cs:0.5,0.6) {\(\left( \frac{T}{2}, \frac T 2\right)\)}
      node at (axis cs:1.1,0.1) {\((T,0)\)};
      \addplot [color=red] coordinates {(0,0) (0.5,0.2) (1,0)}
      node at (axis cs: 0.5, 0.3) {\(f(x)\)}
      ;
    \end{axis}
  \end{tikzpicture}
  \caption{The function \(f(x)=\frac{2c}{T} -\frac{4c}{T^2}\abs{x-\frac{T}{2}}\)}
  \label{fig:functionbrokenline}
\end{figure}
As a corollary of the previous proposition, we can already solve the Lorentzian isoperimetric problem stated in \cref{thm:isoperimetric_solution} whenever we are in the case $c = 0$ or in the case $-a^2 + b^2 + 4\abs{c} = 0$ with $c \neq 0$.
\begin{corollary}\label{cor:maximum_area_and_timelike_line_case}
  If $c=0$ and $a > \abs{b}$, then the solution to the Lorentzian isoperimetric problem in \cref{thm:isoperimetric_solution} is given by a straight timelike line joining $\mathbf{0}$ to $(a,b)$.

  If $c\neq0$ and $-a^2+b^2+4\abs{c} = 0$, the set $\mathcal{F}$ has only one element, namely, the function $f$ defined by
  \begin{equation}\label{eq:brokenline}
    f(x) \coloneqq  \sgn(c)\min(x,\abs{x-T}).
  \end{equation}
  In particular, the maximizer of \eqref{eq:graphical_length} over $\mathcal{F}$ is precisely given by $f$, whose graph defines a broken lightlike line.
\end{corollary}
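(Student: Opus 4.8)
The plan is to handle the two cases separately, in each instance working within the graphical reformulation over $\mathcal{F}$ furnished by \cref{prop:MinkowskiToGraphical}, and then transporting the conclusion back to $\Min$ by the measure-preserving, future-preserving Lorentzian isometry $R$. Recall that $R$ preserves both Minkowski length and causal character and sends $(a,b)$ to $(T,0)$, so a maximizer of $L$ over $\mathcal{F}$ corresponds, after applying $R^{-1}$, to a maximizer among future-directed causal curves from $\mathbf{0}$ to $(a,b)$ enclosing area $c$. For the case $c=0$, I would argue directly from the length functional \eqref{eq:graphical_length}: since $\Lip(f)\le 1$ forces $\dot f(t)^2\le 1$ for almost every $t$, the integrand obeys $\sqrt{1-\dot f(t)^2}\le 1$, so $L(f)\le T$ for all $f\in\mathcal{F}$, with equality exactly when $\dot f=0$ almost everywhere. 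Combined with $f(0)=0$, this identifies the unique maximizer $f\equiv 0$, whose graph is the segment from $\mathbf{0}$ to $(T,0)$. Pulling back by $R^{-1}$ gives the straight segment from $\mathbf{0}$ to $(a,b)$, and since its length $T=\sqrt{a^2-b^2}>0$ is strictly positive, it is timelike.

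For the case $c\neq 0$ with $-a^2+b^2+4\abs{c}=0$, i.e. $4\abs{c}=T^2$, I would revisit the chain of inequalities \eqref{eq:area_constraint_estimates} established in the proof of \cref{prop:future_set_origin}, this time analyzing its equality cases. The hypothesis $4\abs{c}=T^2$ forces every inequality in that chain to be an equality simultaneously. Equality in $\abs{\int_0^T f}\le\int_0^T\abs{f}$ shows that $f$ does not change sign, so $\sgn(f)=\sgn(c)$ wherever $f\neq 0$. Equality in $\int_0^T\abs{f}\le\int_0^T\min(t,T-t)\,\diff t$, combined with the pointwise bound $\abs{f(t)}\le\min(t,T-t)$ coming from the inclusion \eqref{eq:graphinclusion}, forces $\abs{f(t)}=\min(t,T-t)$ for almost every, and hence (by continuity of $f$) for every, $t\in[0,T]$. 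Therefore $f(t)=\sgn(c)\min(t,\abs{t-T})$, which is precisely the broken line \eqref{eq:brokenline}. Thus $\mathcal{F}$ reduces to a single element, which is trivially the maximizer. Since $\abs{\dot f}=1$ on each of the two affine pieces, each segment is null and $L(f)=0$, so its lift is a broken lightlike line.

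Both computations are elementary, so the only delicate point is the equality analysis in the second case: one must extract \emph{both} equality conditions from the single hypothesis $4\abs{c}=T^2$ and combine them correctly. In particular, the passage from the almost-everywhere saturation of the integral to the genuinely pointwise identity $\abs{f(t)}=\min(t,T-t)$ relies on the uniform pointwise constraint $\abs{f(t)}\le\min(t,T-t)$ valid for every admissible $f$, together with a continuity argument; this is exactly what rigidifies the problem and collapses $\mathcal{F}$ to a single curve.
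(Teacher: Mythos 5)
Your proposal is correct and follows essentially the same route as the paper: the $c=0$ case via the bound $L(f)\le T$ with equality forcing $f\equiv 0$, and the null case via rigidity in the chain of inequalities \eqref{eq:area_constraint_estimates}, using the pointwise bound $\abs{f(t)}\le\min(t,T-t)$ to collapse $\mathcal{F}$ to the single broken line. Your handling of the sign (directly from equality in $\abs{\int f}\le\int\abs{f}$) and your explicit a.e.-to-everywhere continuity step are only cosmetic refinements of the paper's argument, which fixes the sign $\epsilon$ a posteriori from the area condition $c=\epsilon T^2/4$.
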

\begin{proof}
  Assume that $c = 0$ and $a > \abs{b}$. Then the condition $-a^2 + b^2 + 4\abs{c} \leq 0$ is satisfied, so by \cref{prop:future_set_origin}, the set $\mathcal{F}$ is nonempty. Notice that for any $f \in \mathcal{F}$, we can estimate the length $L(f)$ given in \eqref{eq:graphical_length} as follows:
  \[
    L(f) = \int_0^T \sqrt{1 - \dot{f}(t)^2} \, dt \leq \int_0^T 1 \, dt = T,
  \]
  with equality if and only if $\dot{f}(t) = 0$ for almost all $t \in [0,T]$, which is equivalent to $f \equiv 0$ since $f$ vanishes on the boundary. Since we are assuming that $c = 0$, the function $f \equiv 0$ indeed belongs to the set $\mathcal{F}$ and is therefore the unique solution to the maximization problem. We also observe that this curve is timelike.

  Assume now that $c \neq 0$ and that $-a^2 + b^2 + 4\abs{c} = 0$. The chain of inequalities shown in \eqref{eq:area_constraint_estimates} then becomes a chain of equalities. In particular, $f$ has a definite sign, which we denote by $\epsilon \in \{-1, 1\}$, and we also have
  \[
    \abs{f(x)} = \epsilon f(x) = \min(x, \abs{x - T}).
  \]
  Rewriting the function as $f(x) = \epsilon \min(x, \abs{x - T})$, the condition $A(f) = c$ yields
  \[
    c = \int_0^T \epsilon \min(x, \abs{x - T}) \diff x = \epsilon \, \frac{T^2}{4} = \epsilon \abs{c},
  \]
  from which it follows that $\sgn(c) = \epsilon$, thus completing the proof.
\end{proof}
In the Riemannian case, where we {\em minimize} the Euclidean length, the solution to the isoperimetric problem is known to be an arc of a circle that encloses an area precisely equal to $c$. In the Lorentzian setting, the time-separation replaces the distance, and the locus of points which are ``equidistant'' to a center/vertex is given by a \emph{hyperbola}. This will exactly be the solution mentioned in \cref{thm:isoperimetric_solution}, so we now seek the specific hyperbola that satisfies the area constraint.
\begin{lemma}\label{lemma:existence_hyperbola}
  Consider any point $(a, b) \in \Min$ with $a > \abs{b}$ and any $c \neq 0$ satisfying the constraint $-a^2 + b^2 + 4\abs{c} < 0$. Then there exists a unique arc of hyperbola from $\mathbf{0}$ to $(a, b)$ that encloses area $c$. Moreover, such an arc is timelike.
\end{lemma}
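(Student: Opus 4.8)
The plan is to exploit the reduction already carried out in \cref{prop:MinkowskiToGraphical}. Since the linear map $R$ of \eqref{eq:matrixLorentzRotation} is a future-preserving Lorentzian isometry with $\det R = 1$, it preserves Lorentzian length, signed area, and the causal character of curves; moreover, being a linear isometry it maps the level sets $\{q : \tilde g(q - V, q - V) = \text{const}\}$ of the Minkowski form (the Lorentzian circles, i.e.\ hyperbolae) to level sets with vertex $RV$, hence sends hyperbolae to hyperbolae. As $R(\mathbf{0}) = \mathbf{0}$ and $R(a,b) = (T,0)$ with $T = \sqrt{a^2 - b^2}$, it is therefore equivalent to produce a unique timelike hyperbolic arc from $\mathbf{0}$ to $(T,0)$ enclosing signed area $c$. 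By the reflection $(x,y) \mapsto (x,-y)$, an area-reversing Lorentzian isometry fixing the two endpoints, I may assume $c > 0$.

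First I would pin down the relevant family of hyperbolae. A hyperbola whose arc is the graph of a timelike curve over $x \in [0,T]$ must be of the \emph{vertical} form $(y - y_0)^2 - (x - x_0)^2 = \rho^2$; the \emph{horizontal} hyperbolae $(x-x_0)^2 - (y-y_0)^2 = \rho^2$ have graphs of slope exceeding $1$ in absolute value and are thus spacelike, hence excluded. Imposing passage through $\mathbf{0}$ and $(T,0)$ and subtracting the two resulting equations forces $x_0 = T/2$, so the admissible arcs form the one-parameter family
\[
  f_\rho(x) = \sqrt{\rho^2 + \tfrac{T^2}{4}} - \sqrt{\rho^2 + \left(x - \tfrac{T}{2}\right)^2}, \qquad x \in [0,T], \ \rho > 0,
\]
each symmetric about $x = T/2$, with $f_\rho(0) = f_\rho(T) = 0$ and $f_\rho > 0$ on $(0,T)$.

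The core of the argument is to show that the signed area $A(\rho) = \int_0^T f_\rho(x)\,\diff x$ is a strictly decreasing continuous bijection from $(0,+\infty)$ onto $\left(0, T^2/4\right)$. Differentiating under the integral sign, $\partial_\rho f_\rho(x) = \rho\bigl((\rho^2 + T^2/4)^{-1/2} - (\rho^2 + (x-T/2)^2)^{-1/2}\bigr) \le 0$, with strict inequality on $(0,T)$ since $(x - T/2)^2 < T^2/4$ there; hence $A$ is strictly decreasing. For the limits, multiplying by the conjugate shows $f_\rho(x) \to 0$ pointwise as $\rho \to +\infty$, while $f_\rho(x) \to T/2 - |x - T/2| = \min(x, T-x)$ as $\rho \to 0^+$, recovering the broken null line of \cref{cor:maximum_area_and_timelike_line_case}. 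Since $0 \le f_\rho(x) \le \min(x, T-x)$ uniformly in $\rho$, dominated convergence yields $A(\rho) \to 0$ and $A(\rho) \to \int_0^T \min(x, T-x)\,\diff x = T^2/4$ respectively. The hypothesis $-a^2 + b^2 + 4c < 0$ reads exactly $0 < c < T^2/4$, so by the intermediate value theorem and strict monotonicity there is a unique $\rho > 0$ with $A(\rho) = c$, giving existence and uniqueness.

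Finally, timelikeness follows from the explicit derivative $|f_\rho'(x)| = |x - T/2|\,(\rho^2 + (x - T/2)^2)^{-1/2} < 1$ for every $x \in [0,T]$, valid because $\rho > 0$; transporting back by the isometry $R^{-1}$ preserves this. I do not expect a genuine obstacle: the one delicate point is correctly identifying the family $f_\rho$ — in particular the orientation of the hyperbola, which must open in the timelike (vertical) direction to remain causal as a graph over $x$ — after which the remaining steps are a routine monotonicity-and-limits analysis of $A(\rho)$, for which the explicit antiderivative in terms of $\arcsinh$ is available but not needed.
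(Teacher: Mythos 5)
Your proof is correct and follows essentially the same route as the paper's: after the isometric reduction to endpoints $\mathbf{0}$ and $(T,0)$, both arguments single out the one-parameter family of hyperbolae centered on the axis $x = T/2$, show that the enclosed area is strictly monotone and continuous in the parameter with limiting values $T^2/4$ (broken null line) and $0$, and conclude by the intermediate value theorem, with timelikeness coming from the slope bound $|f'|<1$. Your reflection trick to assume $c>0$ and your parametrization by the semi-axis $\rho$ (instead of the vertex height $y_C$, related by $\rho^2 = y_C^2 - T^2/4$) are only cosmetic simplifications of the paper's $\sgn(y_C)$ bookkeeping on the two components $(-\infty,-T/2]\cup[T/2,+\infty)$.
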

\begin{proof}
  We start by computing the equation of the (Minkowskian) axis between the origin and the point $(T,0)$:
  \[
    (x-T)^2-y^2 = x^2-y^2 \implies x = \frac{T}{2}.
  \]
  If $(T/2, y_C)$ is the vertex, then the equation of the hyperbola is
  \[
    \left(x - \frac{T}{2}\right)^2 - (y_C - y)^2 = \pm \left(\frac{T^2}{4} - y_C^2\right).
  \]
  If the minus sign is chosen, then the constraint that the hyperbola passes through the origin implies that it has area $T^2/4 > |c|$ because it degenerates into two broken lines, as in \eqref{eq:brokenline} (up to a sign). Thus we must choose the plus sign and setting $x = T/2$ in this equation implies that $y_C^2 - T^2/4 \geq 0$. Therefore we obtain
  \[
    y = y_C \pm \sqrt{\left(x - \frac{T}{2}\right)^2 + y_C^2 - \frac{T^2}{4}}.
  \]
  Imposing that the hyperbola passes through the origin tells us that the correct branch of the hyperbola is
  \[
    f_{y_C}(x) \coloneqq y_C - \sgn(y_C) \sqrt{\left(x - \frac{T}{2}\right)^2 + y_C^2 - \frac{T^2}{4}}.
  \]

  It is straightforward to verify that this defines a causal hyperbola on $[0, T]$ which also passes through the point $(T, 0)$. More precisely, the hyperbola degenerates into a broken line when $y_C = \pm T/2$, in which case the curve is null. If $\abs{y_C} > T/2$, the curve is timelike. It is also immediate to see that $f_{y_C}$ has the same sign as $y_C$, so that $f_{y_C} \geq 0$ when $y_C \geq T/2$, and $f_{y_C} \leq 0$ when $y_C \leq -T/2$.

  We now need to choose $y_C$ so that the hyperbola encloses the desired area. To this end, we differentiate the expression for the hyperbola with respect to $y_C$; this is justified since, away from $y_C = 0$, the function is smooth in $y_C$ and satisfies $y_C^2 \geq T^2/4 > 0$. We get
  \[
    \pder{f_{y_C}(x)}{y_C} = 1 - \frac{\abs{y_C}}{\sqrt{\left(x - \frac{T}{2}\right)^2 + y_C^2 - \frac{T^2}{4}}} \leq 0,
  \]
  which shows that the function is non-increasing in $y_C$ on each connected component of $(-\infty, -T/2] \cup [T/2, +\infty)$. More precisely, if $x \neq 0$ or $x \neq T$, the derivative above is strictly negative; and since the hyperbola evaluates to zero at $x = 0$ and $x = T$, this tells us that the family of functions parametrized by $y_C$ has fixed boundary values and is decreasing (again, on each connected component of $(-\infty, -T/2] \cup [T/2, +\infty)$) as $y_C$ increases. Next, we consider the area enclosed by the hyperbola, given by
  \[
    A(f_{y_C}) = \int_0^T y_C - \sgn(y_C)\sqrt{\left(x - \frac{T}{2}\right)^2 + y_C^2 - \frac{T^2}{4}} \diff x,
  \]
  and observe that, by the monotonicity of the integrand and the continuity of the functions $f_{y_C}$, the area is decreasing in $y_C$ on each connected component as well. We notice that for $y_C = \pm T/2$, the area becomes
  \[
    A(f_{y_C}) = \int_0^T \pm\frac{T}{2} \mp \abs{x - \frac{T}{2}} \diff x = \pm\frac{T^2}{4},
  \]
  whereas, by letting $y_C \to \pm\infty$ and applying the Monotone Convergence Theorem, we find
  \[
    \lim_{y_C \to \pm\infty} A(f_{y_C}) = \int_0^T \lim_{y_C \to +\infty} \left( y_C - \sgn(y_C) \sqrt{\left(x - \frac{T}{2}\right)^2 + y_C^2 - \frac{T^2}{4}} \right) \diff x = 0.
  \]
  The Monotone Convergence Theorem actually ensures that the area is a continuous function of $y_C$, hence $A(f_{y_C})$ attains all values in $(0, T^2/4]$ for $y_C \geq T/2$, and all values in $[-T^2/4, 0)$ for $y_C \leq -T/2$. Since we are assuming $0 < \abs{c} < T^2/4$, it follows by the Intermediate Value Theorem and the monotonicity of the area that there exists a unique value $\bar y_C \in (-\infty, -T/2] \cup [T/2, +\infty)$ such that $A(f_{\bar y_C}) = c$.

  Therefore, there exists a unique timelike hyperbola joining the origin to $(T, 0)$ that satisfies the area constraint.
\end{proof}

Notice that the Lipschitz constant of the function $f_{\bar y_C}(\cdot)$ is strictly less than 1.

To prove \cref{thm:isoperimetric_solution}, we aim to apply methods from the Calculus of Variations. We would like to compute the Euler-Lagrange equation associated with the functional $L$ and show that its solution must be a maximizer. However, a subtle issue arises when trying to work with variations of a function $f \in \mathcal{F}$: the condition $\operatorname{Lip}(f) \leq 1$ is not preserved under variations. As a workaround, we establish the following lemma.

\begin{lemma}\label{lemma:reduction_timelike_case}
  Let $(a,b,c) \in \R^3$ with $c \neq 0$ and $-a^2 + b^2 + 4\abs{c} < 0$. Consider the set defined by
  \[
    \mathcal{F}' = \left\{ f \in \operatorname{Lip}([0,T]) \mid f(0) = f(T) = 0, \ \operatorname{Lip}(f) < 1, \ \int_0^T f \, dt = c \right\} \subseteq \mathcal F.
  \]
  Then it holds
  \begin{equation}\label{eq:sup_timelike}
    \sup_{f \in \mathcal{F}'} L(f) = \sup_{f \in \mathcal{F}} L(f).
  \end{equation}
\end{lemma}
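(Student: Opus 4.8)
The inclusion $\mathcal{F}' \subseteq \mathcal{F}$ gives immediately $\sup_{f \in \mathcal{F}'} L(f) \leq \sup_{f \in \mathcal{F}} L(f)$, so the entire content of the lemma lies in the reverse inequality. The plan is to show that every $f \in \mathcal{F}$ can be approximated, in a way that preserves both the boundary conditions and the area constraint, by functions lying in $\mathcal{F}'$, and that the length functional is continuous along this approximation. The structural feature I would exploit is that the two genuinely restrictive constraints defining membership in $\mathcal{F}$---vanishing at the endpoints and the value of the signed area $\int_0^T f \diff t = c$---are \emph{affine} in $f$, hence preserved under convex combinations, whereas the constraint $\Lip(f) \leq 1$ behaves subadditively under such combinations.

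Concretely, I would first fix a distinguished element of $\mathcal{F}'$ with strictly sub-unit Lipschitz constant. Because we are in the regime $-a^2 + b^2 + 4\abs{c} < 0$, i.e. $4\abs{c}/T^2 < 1$, the explicit tent function $g(x) = \frac{2c}{T} - \frac{4c}{T^2}\abs{x - \frac{T}{2}}$ constructed in \cref{prop:future_set_origin} satisfies $g(0) = g(T) = 0$, $A(g) = c$, and $\Lip(g) = 4\abs{c}/T^2 < 1$, so that $g \in \mathcal{F}'$. Given an arbitrary $f \in \mathcal{F}$ and a parameter $t \in (0,1]$, I would then set $f_t \coloneqq (1-t) f + t\, g$. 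The affine constraints are immediate: $f_t(0) = f_t(T) = 0$ and $\int_0^T f_t \diff t = (1-t)c + tc = c$. For the Lipschitz constant, subadditivity yields
\[
  \Lip(f_t) \leq (1-t)\Lip(f) + t\, \Lip(g) \leq (1-t) + t\, \frac{4\abs{c}}{T^2} = 1 - t\left(1 - \frac{4\abs{c}}{T^2}\right) < 1,
\]
so that $f_t \in \mathcal{F}'$ for every $t \in (0,1]$.

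It remains to verify that $L(f_t) \to L(f)$ as $t \to 0^+$. Since $\dot f_t = (1-t)\dot f + t\, \dot g \to \dot f$ pointwise almost everywhere, and since the continuity of $s \mapsto \sqrt{1 - s^2}$ on $[-1,1]$ gives $\sqrt{1 - \dot f_t^2} \to \sqrt{1 - \dot f^2}$ almost everywhere, I would invoke the Dominated Convergence Theorem with the constant dominating function $1$ (integrable on $[0,T]$ and bounding every integrand, as $\abs{\dot f_t} \leq 1$). This yields $L(f) = \lim_{t \to 0^+} L(f_t) \leq \sup_{h \in \mathcal{F}'} L(h)$; taking the supremum over $f \in \mathcal{F}$ gives $\sup_{\mathcal{F}} L \leq \sup_{\mathcal{F}'} L$, and combined with the trivial inequality this establishes \eqref{eq:sup_timelike}. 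The only point requiring care is that the approximating family stays uniformly strictly below Lipschitz constant $1$ for each fixed $t > 0$ while still converging to $f$, and it is precisely the affinity of the area constraint that makes the convex-combination trick succeed without having to separately repair the area after shrinking the Lipschitz bound. I do not expect a serious obstacle beyond correctly justifying the passage to the limit in $L$.
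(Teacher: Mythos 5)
Your proof is correct and follows essentially the same route as the paper: form the convex combination of an arbitrary $f \in \mathcal{F}$ with a fixed element of $\mathcal{F}'$ (the affine constraints are preserved, the Lipschitz constant drops strictly below $1$), then pass to the limit in $L$ via the Dominated Convergence Theorem with dominating function $1$. The only cosmetic difference is your choice of witness in $\mathcal{F}'$ --- the tent function from \cref{prop:future_set_origin}, whose Lipschitz constant $4\abs{c}/T^2$ is strictly less than $1$ in this regime --- where the paper instead takes the hyperbola from \cref{lemma:existence_hyperbola}; both are valid.
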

\begin{proof}
  As $\mathcal{F}' \subset \mathcal{F}$, the inequality $\leq$ in \eqref{eq:sup_timelike} is immediate. To prove the opposite inequality, consider any function $g \in \mathcal{F}$. By \cref{lemma:existence_hyperbola}, we know that $\mathcal{F}' \neq \emptyset$, so consider $f \in \mathcal{F}'$. Let us define the sequence of functions $\{g_n\}_n$ by
  \[
    g_n = \frac{1}{n}f + \left( 1 - \frac{1}{n} \right)g,
  \]
  that is, $g_n$ is a convex combination of $f$ and $g$. It is straightforward to see that $g(0) = g(T) = 0$, and that by linearity and convexity, $A(g_n) = c$. Moreover, we have that
  \[
    \Lip(g_n) \leq \frac{1}{n}\Lip(f) + \left(1 - \frac{1}{n}\right)\Lip(g) < \frac{1}{n} + 1 - \frac{1}{n} = 1,
  \]
  since we know that $\Lip(f) < 1$.
  This implies that $(g_n)_n$ is a sequence in $\mathcal{F}'$. It is easy to see that the sequence $(g_n)_n$ converges pointwise to $g$, and the sequence of derivatives $(\dot{g}_n)_n$ also converges pointwise almost everywhere to $\dot{g}$. The set where all the derivatives are defined has full measure, given that we are dealing with a countable collection of functions defined almost everywhere.

  By the Dominated Convergence Theorem, and noting that the integrand of $L_\uptau$ is uniformly bounded above by 1, we obtain
  \[
    \lim_{n \to +\infty} L(g_n) = \int_0^T \lim_{n \to +\infty} \sqrt{1 - \dot{g}_n^2} \diff t = \int_0^T \sqrt{1 - \dot{g}^2} \diff t = L(g).
  \]
  Therefore, we have proven that for any $g \in \mathcal{F}$, there exists a sequence of functions $(g_n)_n$ in $\mathcal{F}'$ whose lengths approach the length of $g$, thus establishing the inequality $\geq$ in \eqref{eq:sup_timelike}.
\end{proof}

We are now ready to prove \cref{thm:isoperimetric_solution}:
\begin{proof}[Proof of \cref{thm:isoperimetric_solution}]
  The fact that $\mathcal{A} \neq \emptyset$ if and only if $a>0$ and $-a^2+b^2+4\abs{c} \leq 0$ was already established by \cref{prop:future_set_origin}. Assuming that the set $\mathcal{A}$ is non-empty, the simpler cases described in items {\em (i)} and {\em (ii)} follow from \cref{prop:lightlike_case} and \cref{cor:maximum_area_and_timelike_line_case}. We therefore focus on proving item {\em (iii)}, which is equivalent, by \cref{lemma:reduction_timelike_case}, to finding the maximizers of the Lorentzian length $L$ over the set $\mathcal{F}'$.

  If $f \in \mathcal{F}'$ is a maximum for our functional, let us consider any Lipschitz function $\varphi$ such that $\varphi(0)=\varphi(T)=0$ and such that it is average-free. The function $f + s\varphi$ then belongs to $\mathcal{F}'$ for small values of $s$. Indeed, the area constraint and boundary conditions are trivially satisfied, while the Lipschitz constant satisfies
  \[
    \Lip(f+s\varphi) \leq \Lip(f) + s \Lip(\varphi),
  \]
  which is strictly less than $1$ at $s=0$ by hypothesis, and thus remains so near $s=0$ by continuity.
  We are then allowed to compute the first variation of the functional with respect to these variations, and we find that
  \begin{equation}
    \label{eq:FirstVariation}
    0 = \frac{\diff}{\diff s} \big( L(f+s\varphi) \big)\big|_{s = 0} = -\int_0^T\frac{\dot{f}\dot{\varphi}}{\sqrt{1-\dot{f}^2}} \diff t,
  \end{equation}
  by the Dominated Convergence Theorem. Given any Lipschitz function $\psi$ vanishing on the boundary of $[0,T]$ and any Lipschitz function $\rho$ that also vanishes on the boundary and  integrates to
  1, we define
  \[
    \varphi \coloneqq \psi - \rho \int_0^T \psi \diff t.
  \]
  The function $\varphi$ is Lipschitz, vanishes at boundary points, and is average-free. Therefore, plugging it into the first variation formula \eqref{eq:FirstVariation} gives
  \begin{equation}\label{eq:euler_lagrange}
    \int_0^T\dot{\psi}(t)\left(\frac{\dot{f}(t)}{\sqrt{1-\dot{f}^2(t)}} \right) \diff t=  \int_0^T \psi(t)\int_0^T \frac{\dot{f}(u)\dot{\rho}(u)}{\sqrt{1-\dot{f}^2(u)}} \diff u \diff t.
  \end{equation}
  Since this equality holds for any Lipschitz function $\psi$ that vanishes at the boundary, we have that
  \[
    D\left( \frac{\dot{f}(t)}{\sqrt{1-\dot{f}^2(t)}} \right) = -\int_0^T \frac{\dot{f}(u)\dot{\rho}(u)}{\sqrt{1-\dot{f}^2(u)}} \diff u,
  \]
  where $D$ denotes the weak derivative. Notice that the right-hand side is constant in $t$, so we denote it by $\lambda$. It is well known that a function with $L^1$ weak derivative is equal to an absolutely continuous function almost everywhere, and so integrating the previous identity yields
  \[
    \frac{\dot{f}(t)}{\sqrt{1-\dot{f}^2(t)}} = \lambda t + C \qquad \text{ for a.e. } t \in [0,T],
  \]
  for some constant $C \in \R$. Noting that $\dot{f}$ and $\lambda t + C$ share the same sign for almost every $t$, we can square the equation and rearrange it to obtain
  \[
    \dot{f}(t) = \frac{\lambda t + C}{\sqrt{1 + (\lambda t + C)^2}}.
  \]
  If $\lambda = 0$, then $f$ is a constant function. Since it vanishes at the boundary, we have $f \equiv 0$. This would imply that the area enclosed by $f$ is zero, but we are considering the case $c \neq 0$. Therefore $\lambda \neq 0$ and integrating the previous identity yields
  \[
    f(t) = \frac{1}{\lambda} \sqrt{1 + (\lambda t + C)^2} + K,
  \]
  for some constant $K$. Observe that this function describes a hyperbola. Since $f \in \mathcal{F}'$, we have $f(0) = f(T) = 0$ and $A(f) = c$. By \cref{lemma:existence_hyperbola}, there exists a unique hyperbola satisfying these conditions, which is also Lipschitz with $\Lip(f) < 1$. Therefore, the constants $C$, $K$, and $\lambda$ are uniquely determined and hence if a maximum of $L$ over $\mathcal{F}'$ does exist, it must be this unique hyperbola in $\mathcal{F}'$, which satisfies the Euler--Lagrange equation \eqref{eq:euler_lagrange}.

  To prove that the hyperbola is indeed the maximizer of the functional $L$, we argue by convexity. Let $f$ be said hyperbola and consider any other element $g \in \mathcal{F}'$ with $g \neq f$; such choice is possible as \cref{prop:future_set_origin} shows the existence of a piecewise linear function in $\mathcal{F}'$, which must then be different from the hyperbola. We consider the function
  \[
    F(s) \coloneqq L(f+s(g-f)) = \int_0^T \sqrt{1 - (\dot{f} +s(\dot{g} - \dot{f}))^2} \diff t.
  \]
  Since $\mathcal{F}'$ is convex, the function $f + s(g - f)$ belongs to $\mathcal{F}'$ for all $s \in [0,1]$. By the Dominated Convergence Theorem, it is also clear that $F$ is a smooth function. We compute the second derivative of $F$ and obtain
  \[
    \frac{\diff^2F}{\diff s^2}(s) = - \int_0^T \frac{(\dot{g} - \dot{f})^2}{\left(1 - (\dot{f} + s(\dot{g} - \dot{f}))^2\right)^{3/2}} \diff t.
  \]
  This integral is strictly negative, since it is nonpositive, and if it were zero, then $\dot{g} = \dot{f}$ for almost every $t$; integrating and using $f(0) = g(0) = 0$, we would obtain $f = g$, contradicting our assumption that $f \neq g$.

  This shows that the function $F$ is strictly concave. Applying Lagrange's Remainder Theorem, we find that for some $t' \in (0,1)$,
  \begin{equation}
    \label{eq:greatinequality}
    F(1) = F(0) + F'(0) + \frac{F''(t')}{2} < F(0) + F'(0),
  \end{equation}
  where the inequality follows from the fact that the second derivative of $F$ is strictly negative. We also notice that the function $g - f$ is Lipschitz, vanishes at the boundary, and has zero average, and that $f + s(g - f)$ is one of the variations considered in the derivation of the Euler-Lagrange equation \eqref{eq:euler_lagrange}. Since $f$ is a solution of the Euler-Lagrange equation and $F'(0)$ is precisely the first variation of the functional with respect to the variation $g - f$, it follows that $F'(0) = 0$. Therefore, last inequality \eqref{eq:greatinequality} becomes
  \[
    F(1) < F(0),
  \]
  which, written explicitly, reads
  \[
    L(g) < L(f).
  \]
  This confirms that $f$ is the unique maximizer in $\mathcal{F}'$, and hence the unique solution to our problem.
\end{proof}

As shown at the beginning of this subsection, a maximizer in $\heis$ joining $e$ to $(a, b, c) \in \heis$ projects to a maximizer of the corresponding Minkowskian isoperimetric problem in $\Min$, and conversely, any such isoperimetric maximizer lifts to a maximizing curve in $\heis$ via \eqref{eq:z(t)issignedarea}.
When \(a > \abs{b}\), we have seen that any maximizer of the Minkowskian isoperimetric problem can be transformed, via rotation by the matrix $R$ from \eqref{eq:matrixLorentzRotation} and an appropriate reparametrization, into a maximizer of the functional $L(f)$ over all $f \in \Lip([0,T], \R)$ belonging to $\mathcal{F}$, and vice versa. Using these correspondences, \cref{maintheorem1:geodesicssubLHeisenberg} follows directly from \cref{thm:isoperimetric_solution}.

\begin{remark}
  We have studied geodesics in the sub-Lorentzian Heisenberg group via the Lorentzian planar isoperimetric problem. Alternatively, one can use optimal control theory and apply Pontryagin's maximum principle to derive first order necessary conditions for length maximizers. This approach was used in \cite{sachkovsachkova23}, and our results agree with theirs. A key feature of Pontryagin's maximum principle is that optimizers can be \emph{normal} or \emph{abnormal}. Normal curves satisfy a Hamiltonian differential equation, while abnormal ones are harder to characterize. Many open problems and conjectures concern them in sub-Riemannian geometry, see \cite{Agrachev2020}. In sub-Lorentzian geometry, this subject remains almost entirely unexplored. All maximizers in the sub-Lorentzian Heisenberg group are either normal or abnormal. The only abnormal ones correspond precisely to maximal area solutions, i.e. to the broken lightlike segments we computed. Furthermore, these are the only solutions not smooth everywhere, having a single point of non-differentiability.
\end{remark}

\subsection{The sub-Lorentzian exponential map}
\label{section:exponentialmap}

We conclude this section by deriving the so-called exponential map of the sub-Lorentzian Heisenberg group, that is, the map describing the timelike geodesic flow. In doing so, we recover the exponential map from \cite[Eq. (5.4)]{sachkovsachkova23} (see also \cite[Def. 7]{borza2025}) as a direct consequence of solving the Minkowskian isoperimetric problem.
In the previous section, we showed that for any $(a, b, c) \in I^{+}(\mathbf{0}) \subseteq \heis$ with $c \neq 0$, there is an hyperbola with vertex $(x_C, y_C)$ and Lorentzian length $L > 0$ joining the origin of $\Min$ to $(a, b)$, which is the unique solution to the Minkowskian isoperimetric problem. Recall that by construction, the vertex $(x_C, y_C)$ of the constructed hyperbola satisfies $\abs{y_C} > \abs{x_C}$, since it is non-degenerate and is a graph over the \(x\)-axis. Conversely, starting from a triple $(x_C, y_C, L)$ such that $\abs{y_C} > \abs{x_C}$ and $L > 0$, we can move along the hyperbola that has vertex $(x_C, y_C)$ passing through the origin and $(a, b)$ until the Lorentzian length of the traversed arc equals $L$ (note that since the hyperbola is timelike, the Lorentzian length along it always increases). We can then lift the point we reach to a point $(a, b, c) \in \heis$ such that the geodesic connecting it to the origin is exactly the lift of the hyperbola. From this observation, one immediately recovers the formulas
\begin{equation}
  \label{eq:causalfutureof0}
  J^+(\mathbf{0})=\{ (x, y, z) \in \heis \mid -x^2+y^2+4\abs{z} \leq 0 \text{ and } x \geq 0\},
\end{equation}
and
\begin{equation}
  \label{eq:chronologicalfutureof0}
  I^+(\mathbf{0})=\{ (x, y, z) \in \heis \mid -x^2+y^2+4\abs{z} < 0 \text{ and } x > 0\},
\end{equation}
which already appear in \cite{Grochowski2006,sachkovsachkova23}.

We stress that we have not provided an explicit formula for determining the vertex of the hyperbola from a given point in $I^+(\mathbf{0})$; we only know that it is uniquely determined.

\begin{remark}
  The hyperbola degenerating into a straight line, i.e. when its vertex $(x_C, y_C)$ \say{tends to infinity}, corresponds to $c$ tending to $0$ and to the area enclosed by the curve joining the origin to $(a, b)$ vanishing.
\end{remark}

We now wish to write down a natural parametrization for the maximizing geodesics of $\heis$, by lifting a parametrization of the isoperimetric maximizer. We start from the expression of the hyperbola as in the proof of \Cref{lemma:existence_hyperbola}:
\begin{equation}
  \label{eq:hyperbola}
  y = y_C - \sgn(y_C) \sqrt{(x - x_C)^2 + y_C^2 - x_C^2}.
\end{equation}
We compute the Lorentzian arclength $l(x)$ of this hyperbola and obtain
\[
  l(x) = \sqrt{y_C^2-x_C^2} \left( \arcsinh \left( \frac{x-x_C}{\sqrt{y_C^2-x_C^2}} \right) - \arcsinh \left( \frac{-x_C}{\sqrt{y_C^2-x_C^2}} \right) \right).
\]
By isolating the first term on the right-hand side and applying $\sinh$, we find that
\[
  x = x_C + \abs{y_C} \sinh\left(\frac{l(x)}{\sqrt{y_C^2 - x_C^2}}\right) - x_C \cosh\left(\frac{l(x)}{\sqrt{y_C^2 - x_C^2}}\right).
\]
We can then substitute in the expression \cref{eq:hyperbola} to also get
\[
  y = y_C + \sgn(y_C)x_C \sinh \left(\frac{l(x)}{\sqrt{y_C^2-x_C^2}}\right) -y_C \cosh \left(\frac{l(x)}{\sqrt{y_C^2-x_C^2}}\right).
\]
Hence, we can use the variable $l \in [0, L]$ as a parameter for the hyperbola. However, we reparametrize it using $t \coloneqq l / L$, so that $t \in [0, 1]$. By computing the initial velocity of the geodesic with respect to this parameter $t$, we define two parameters $u, v$ as
\[
  u \coloneqq \dot{x}(0) = \frac{\sgn(y_C)L}{\sqrt{y_C^2-x_C^2}} \, y_C, \qquad v \coloneqq \dot{y}(0) = \frac{\sgn(y_C)L}{\sqrt{y_C^2-x_C^2}} \, x_C.
\]
We can then compute the component $z(t)$, which represents the area enclosed by the curve at time $t$. A straightforward computation shows that
\[
  z(t) = \frac{y_C^2 - x_C^2}{2} \left( \sinh\left(\frac{-\sgn(y_C) L}{\sqrt{y_C^2 - x_C^2}} \, t \right) - \left( \frac{-\sgn(y_C) L}{\sqrt{y_C^2 - x_C^2}} \, t \right) \right).
\]
We finally define a parameter $w$ as
\[
  w \coloneqq \frac{-\sgn(y_C)L}{\sqrt{y_C^2 - x_C^2}},
\]
so that when $w$ is positive (resp. negative), the area increases (resp. decreases) with time, and $w = 0$ corresponds to the case of vanishing area.

The triple $(u,v,w)$ is clearly completely determined by $(x_C,y_C,L)$. Conversely, it is not hard to see that
\begin{equation}\label{eq:center_from_u_v_w}
  x_C = -\frac{v}{w}, \qquad y_C = - \frac{u}{w}, \quad \text{ and } \quad L = u^2-v^2.
\end{equation}

Notice that the condition \(\abs{y_C} > \abs{x_C}\) is equivalent to \(u > \abs{v}\). In other words, there is a bijection between the triples \((u, v, w) \in \{u \geq \abs{v}\} \times \R\) and the triples $(x_C, y_C, L)$, which induces a bijection between $\{u \geq \abs{v}\} \times \R$ and $I^+(\mathbf{0})$. The case $w = 0$ is again interpreted as corresponding to vanishing area and the vertex of the hyperbola going to infinity.

The expression \cref{eq:center_from_u_v_w} also extends to the case where \(u=\abs{v}\), which corresponds to the case where the hyperbola degenerates into a 45° broken line. When this happens, it holds that \(L=0\), the line will be lightlike and thus the Lorentzian length does not increase with proper time; however, the parametrization does not see the break, but instead keeps running along the lightlike line. Nevertheless, the lightlike line still corresponds to points in the causal future of the origin, so it is still meaningful to consider this case when dealing with \(J^+(\mathbf{0})\).

\begin{definition}\label{def:exponential_map}
  For given \(t \in \R\) and \((u, v, w) \in U \coloneqq \{u > \abs{v}\} \times \R\), the sub-Lorentzian exponential map of the Heisenberg group is given by
  \[
    \exp_{\mathbf{0}}^{(t)}(u, v, w) \coloneqq
    \begin{pmatrix}
      \dfrac{v(\cosh(wt)-1) +u\sinh(wt)}{w}\\[1em]
      \dfrac{v\sinh(wt) +u(\cosh(wt)-1)}{w}\\[1em]
      \dfrac{u^2-v^2}{2}\cdot\dfrac{\sinh(wt)-wt}{w^2}
    \end{pmatrix}, \qquad \text{ if } w \neq 0,
  \]
  and smoothly extended to $w = 0$ by
  \[
    \exp_{\mathbf{0}}^{(t)}(u, v, 0) \coloneqq
    \begin{pmatrix}
      x(t)\\
      y(t)\\
      z(t)
    \end{pmatrix}
    =
    \begin{pmatrix}
      ut\\
      vt\\
      0
    \end{pmatrix}.
  \]
\end{definition}

\begin{proposition}
  \label{prop:expdiffeom}
  For all $(u,v,w) \in U$ and all $T > 0$, the curve defined on \([0,T]\) given by
  \[
    t \mapsto \exp_{\mathbf{0}}^{(t)}(u,v,w)
  \]
  is a length maximizing future-directed timelike geodesic in $\heis$, starting from $\mathbf{0}$. Furthermore, the exponential map $\exp_{\mathbf{0}}^{(1)}$ is a diffeomorphism from $U$ onto $I^+(\mathbf{0})$.
\end{proposition}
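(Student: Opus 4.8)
The plan is to treat the two assertions separately: first that each curve $t\mapsto \exp_{\mathbf 0}^{(t)}(u,v,w)$ is a maximizing future-directed timelike geodesic, and then that $\exp_{\mathbf 0}^{(1)}$ is a diffeomorphism onto $I^+(\mathbf 0)$. For the first assertion I would argue directly from the explicit formula in \cref{def:exponential_map}. Differentiating the first two components (for $w\neq 0$) gives $\dot x(t)=u\cosh(wt)+v\sinh(wt)$ and $\dot y(t)=u\sinh(wt)+v\cosh(wt)$, so that $\dot x(t)^2-\dot y(t)^2=u^2-v^2>0$ on $U$; since $\cosh>\abs{\sinh}$ and $u>\abs{v}$, one also gets $\dot x(t)>0$. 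Hence the curve is future-directed, timelike, of constant Lorentzian speed $\sqrt{u^2-v^2}$, and starts at $\mathbf 0$. A short computation that the third component satisfies $\dot z=\tfrac12(x\dot y-\dot x y)=\tfrac{(u^2-v^2)(\cosh(wt)-1)}{2w}$ confirms horizontality, so the curve is the lift of its projection $\mathsf p\circ\gamma$, which lies on the hyperbola $(x+v/w)^2-(y+u/w)^2=(v^2-u^2)/w^2$ through the origin, with vertex $(x_C,y_C)=(-v/w,-u/w)$ satisfying $\abs{y_C}>\abs{x_C}$; the case $w=0$ gives the timelike line $t\mapsto(ut,vt)$.

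For maximality, fix $T>0$ and set $(a,b,c):=\exp_{\mathbf 0}^{(T)}(u,v,w)$. As the curve is future-directed timelike from $\mathbf 0$, its endpoint lies in $I^+(\mathbf 0)$, and by horizontality the projected arc from $\mathbf 0$ to $(a,b)$ encloses signed area $c$. By \cref{corollary:geodesicHiffisopinM} it then suffices to show that this projected arc solves the Minkowskian isoperimetric problem for the data $(a,b,c)$. Here I would use Lorentz invariance: applying the rotation $R$ from \eqref{eq:matrixLorentzRotation} associated to $(a,b)$ sends the arc to a hyperbola of the form $(x-x_C')^2-(y-y_C')^2=\mathrm{const}$ joining $\mathbf 0$ to $(\sqrt{a^2-b^2},0)$, and since both endpoints now have vanishing second coordinate, subtracting the two defining equations forces the vertex onto the bisector $x=\tfrac12\sqrt{a^2-b^2}$. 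Thus the rotated arc is exactly the curve singled out in \cref{lemma:existence_hyperbola}, which by \cref{thm:isoperimetric_solution} is the unique maximizer in the timelike case (the case $w=0$, equivalently $c=0$, being covered by \cref{cor:maximum_area_and_timelike_line_case}). Consequently $\gamma|_{[0,T]}$ is the unique maximizing geodesic from $\mathbf 0$ to $(a,b,c)$.

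For the diffeomorphism claim I would first record smoothness of $\exp_{\mathbf 0}^{(1)}$ on all of $U$: away from $w=0$ it is a composition of smooth maps, and at $w=0$ the removable factors $(\cosh w-1)/w$, $\sinh w/w$ and $(\sinh w-w)/w^2$ are entire functions of $w$, so the map extends analytically and agrees with the stated value at $w=0$. Bijectivity onto $I^+(\mathbf 0)$ then follows from the geodesic theory rather than from an explicit inverse. Injectivity holds because two triples with the same image would, by the first assertion, produce two maximizing geodesics between the same endpoints, contradicting the uniqueness in \cref{thm:isoperimetric_solution}. Surjectivity holds because every point of $I^+(\mathbf 0)$, described in \eqref{eq:chronologicalfutureof0}, is the endpoint of the lift of the unique isoperimetric hyperbola of \cref{lemma:existence_hyperbola}, whose vertex and Lorentzian length determine a triple $(u,v,w)\in U$ through the relations \eqref{eq:center_from_u_v_w}.

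Finally, to upgrade this continuous bijection to a diffeomorphism I would compute the Jacobian determinant of $\exp_{\mathbf 0}^{(1)}$ and show it is nonzero throughout $U$; combined with bijectivity and the inverse function theorem, this gives smoothness of the inverse. I expect this determinant to be the main obstacle: its entries mix $\sinh w$, $\cosh w$ and the singular factors $1/w,1/w^2$, so one must verify that the expression collapses to a manifestly nonvanishing quantity (I anticipate a positive multiple of $(u^2-v^2)$ times an elementary, strictly signed function of $w$) and, separately, evaluate the Jacobian of the analytically extended map at $w=0$ via Taylor expansion, where the singular factors cancel. The secondary subtlety is the vertex-versus-subarc issue handled above, namely ensuring that restricting the curve to $[0,T]$ still yields a genuine isoperimetric solution for its own endpoints; the Lorentz-invariance argument of the second paragraph is exactly what resolves it.
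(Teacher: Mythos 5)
Your proposal is correct and follows essentially the same route as the paper: maximality by identifying the projected arc with the unique isoperimetric hyperbola of \cref{lemma:existence_hyperbola}, bijectivity via the correspondence between triples $(u,v,w)$ and hyperbola data $(x_C,y_C,L)$, and the diffeomorphism property from smoothness, a nonvanishing Jacobian, the inverse function theorem, and global bijectivity. Where the paper simply says the first claim holds ``by construction'' (citing \cref{subsection:isopiffgeod} and \cref{thm:isoperimetric_solution}), you re-verify it by direct differentiation together with the vertex-on-the-bisector observation, which also cleanly disposes of the restriction-to-$[0,T]$ subtlety; this is a fleshed-out version of the same argument, not a different one.

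The one substantive step you leave unexecuted is the one you flag yourself: the Jacobian determinant. This is precisely the content of \cref{lemma:jacobian_exponential} in the paper's appendix, where the computation is tamed by writing $\exp_{\mathbf 0}^{(t)}=\varphi\circ T$ with $T(u,v,w)=(tu,tv,tw)$ and using that $\varphi$ commutes with Lorentz boosts to reduce to the case $b=0$; the outcome at $t=1$ is
\[
\det\bigl(D_{(u,v,w)}\exp_{\mathbf 0}^{(1)}\bigr)
=4(u^2-v^2)\,\frac{\sinh(w/2)}{w}\cdot\frac{(w/2)\cosh(w/2)-\sinh(w/2)}{w^3},
\]
extended by $(u^2-v^2)/12$ at $w=0$, which is a positive multiple of $u^2-v^2$ times a strictly positive even function of $w$ --- exactly the form you anticipated. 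So your plan is sound, but be aware that this computation is the real work behind the diffeomorphism half of the statement rather than a routine check. A minor point on injectivity: uniqueness in \cref{thm:isoperimetric_solution} is only up to reparametrization, so to conclude $(u,v,w)=(u',v',w')$ you should combine it with the bijection \eqref{eq:center_from_u_v_w} between triples and $(x_C,y_C,L)$ (which you already invoke for surjectivity), since coincidence of the images and endpoints forces equal vertices and equal lengths.
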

\begin{proof}
  We know that the curves are length maximizing, future-directed timelike and geodesics from \Cref{subsection:isopiffgeod} and \Cref{thm:isoperimetric_solution}. The fact that \(\exp_\mathbf{0}^{(1)}\) is bijective follows from the way we defined the exponential map, as we showed that there is a bijection between \((u,v,w) \in \{u> \abs{v} \times \R\}\) and the triples \((x_C,y_C,L)\) with \(y_C > \abs{x_C}\) and \(L>0\); in turn, these triples are in bijection with points of the chronological future of \(\mathbf{0}\) as we discussed at the beginning of this section.

  The fact that the map is a diffeomorphism comes from noticing that it is smooth and that by \Cref{lemma:jacobian_exponential} its Jacobian determinant never vanishes on the set \(\{u > \abs{v}\} \times \R\), hence it is locally invertible with smooth inverse; being bijective, the inverse map is well defined globally and thus smooth.
\end{proof}

A priori, we do not know anything about the exponential map evaluated at negative times. The following statement shows that even in that case we still get maximizing geodesics, so that the exponential map actually yields past-directed geodesics as well:
\begin{proposition}
  The map $\exp^{(-1)}_{\mathbf{0}}$ is a diffeomorphism from $U$ onto $I^-(\mathbf{0})$.
  Moreover, for any $(u,v,w) \in U$ the curve $\exp_{\mathbf{0}}^{(t-1)}(u,v,w)$ is a timelike future-directed maximizing geodesic joining $\exp^{(-1)}_{\mathbf{0}}(u,v,w)$ to the origin.
\end{proposition}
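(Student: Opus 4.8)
The plan is to reduce the past-directed statement to the already-established future statement \Cref{prop:expdiffeom}, using left-invariance as the main engine. Fix $(u,v,w)\in U$ and set $p:=\exp_{\mathbf{0}}^{(-1)}(u,v,w)$. I would first record, directly from \Cref{def:exponential_map}, that the full curve $s\mapsto\exp_{\mathbf{0}}^{(s)}(u,v,w)$ has horizontal velocity with components $\dot x(s)=u\cosh(ws)+v\sinh(ws)$ and $\dot y(s)=v\cosh(ws)+u\sinh(ws)$, so that $\dot x(s)^2-\dot y(s)^2=u^2-v^2>0$ and $\dot x(s)\geq u\cosh(ws)-|v|\,|\sinh(ws)|>0$ for \emph{all} $s\in\R$ (using $u>|v|$ and $\cosh>|\sinh|$). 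Hence the whole curve is future-directed and timelike; in particular so is the arc $c(t):=\exp_{\mathbf{0}}^{(t-1)}(u,v,w)$, $t\in[0,1]$, which runs from $c(0)=p$ to $c(1)=\exp_{\mathbf{0}}^{(0)}(u,v,w)=\mathbf{0}$.

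The crux of the argument is the following left-invariance identity, which I would verify by a direct computation with the group law: with the Lorentz-boosted data
\[
  \Theta(u,v,w):=\bigl(u\cosh w-v\sinh w,\ v\cosh w-u\sinh w,\ w\bigr),
\]
one has $p^{-1}\ast c(t)=\exp_{\mathbf{0}}^{(t)}(\Theta(u,v,w))$ for all $t\in[0,1]$. The candidate $\Theta(u,v,w)$ is exactly $(\dot x(-1),\dot y(-1),w)$, i.e. the velocity of the geodesic at time $-1$ together with the unchanged twist parameter $w$; this is forced because left-translations act by translation on the first two coordinates, preserve the left-invariant frame $X,Y$, and leave invariant the shape of the underlying hyperbola in $\Min$. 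I expect the main obstacle to be the bookkeeping in the third (area) coordinate, where the group cocycle $\tfrac12(xy'-x'y)$ enters; a useful partial check is that at $t=1$ both sides equal $p^{-1}$, whose components are $\bigl(\tfrac{u\sinh w-v(\cosh w-1)}{w},\,\tfrac{v\sinh w-u(\cosh w-1)}{w},\,\tfrac{u^2-v^2}{2}\cdot\tfrac{\sinh w-w}{w^2}\bigr)$, and these do coincide with $\exp_{\mathbf{0}}^{(1)}(\Theta(u,v,w))$ because $(u')^2-(v')^2=u^2-v^2$ under $\Theta$.

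Granting this identity, the conclusion assembles cleanly. Since $\Theta(u,v,w)\in U$ (indeed $(u')^2-(v')^2=u^2-v^2>0$ and $u'=\dot x(-1)>0$), \Cref{prop:expdiffeom} shows that $t\mapsto\exp_{\mathbf{0}}^{(t)}(\Theta(u,v,w))=p^{-1}\ast c(t)$ is a future-directed timelike maximizing geodesic from $\mathbf{0}$ to $p^{-1}=\exp_{\mathbf{0}}^{(1)}(\Theta(u,v,w))\in I^+(\mathbf{0})$. Left-translating by $p$ and invoking \Cref{rmk:translations_preserve_geodesics}, which preserves causal character, time-orientation, length, and the maximizing property, turns this into the statement that $c$ is a future-directed timelike maximizing geodesic from $p=\exp_{\mathbf{0}}^{(-1)}(u,v,w)$ to $\mathbf{0}$; in particular $p\ll\mathbf{0}$, so $p\in I^-(\mathbf{0})$.

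Finally, for the diffeomorphism claim I would observe that the same identity rewrites the map as the composition $\exp_{\mathbf{0}}^{(-1)}=\iota\circ\exp_{\mathbf{0}}^{(1)}\circ\Theta$, where $\iota(q):=q^{-1}=-q$. Here $\Theta$ is a diffeomorphism of $U$ onto itself (for each fixed $w$ it is the Lorentz boost of determinant $\cosh^2w-\sinh^2w=1$, with smooth inverse $(u',v',w')\mapsto(u'\cosh w'+v'\sinh w',\,v'\cosh w'+u'\sinh w',\,w')$, which again maps $U$ into $U$), $\iota$ is a diffeomorphism of $\heis$ with $\iota(I^+(\mathbf{0}))=I^-(\mathbf{0})$ (immediate from \cref{eq:chronologicalfutureof0}, since $-x^2+y^2+4|z|$ is invariant under $q\mapsto-q$ while the sign of $x$ flips), and $\exp_{\mathbf{0}}^{(1)}:U\to I^+(\mathbf{0})$ is a diffeomorphism by \Cref{prop:expdiffeom}. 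Being a composition of diffeomorphisms, $\exp_{\mathbf{0}}^{(-1)}$ is then a diffeomorphism from $U$ onto $I^-(\mathbf{0})$, which completes the proof.
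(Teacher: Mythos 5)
Your proposal is correct and follows essentially the same route as the paper: your boost map $\Theta$ is exactly the paper's matrix $R$, your conjugation identity $p^{-1}\ast c(t)=\exp_{\mathbf{0}}^{(t)}(\Theta(u,v,w))$ is precisely the paper's identity $\exp_{\mathbf{0}}^{(t-1)}(u',v',w')=p\ast\exp_{\mathbf{0}}^{(t)}(R(u',v',w'))$, and both arguments conclude by combining \Cref{prop:expdiffeom} with the fact that left translations are future-preserving isometries (\Cref{rmk:translations_preserve_geodesics}), together with the factorization $\exp_{\mathbf{0}}^{(-1)}=\iota\circ\exp_{\mathbf{0}}^{(1)}\circ\Theta$ for the diffeomorphism claim. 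Like the paper, which only asserts that ``a slightly more involved calculation shows'' the general-$t$ identity, you defer that computation (checking it explicitly only at $t=1$), so the two write-ups are at the same level of detail and rigor.
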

\begin{proof}
  If $p \ll \mathbf{0}$, then left-translation by $-p$ gives $\mathbf{0} \ll -p$, see \cref{eq:leqLeftTranslations}. So $-p = \exp_{\mathbf{0}}^{(1)}(u,v,w)$ for some $(u,v,w) \in U$. We define the function
  \[
    L(u,v,w)
    \coloneqq
    \begin{pmatrix}
      \cosh(w) & \sinh(w) & 0\\
      \sinh(w) & \cosh(w) & 0\\
      0 & 0 & 1
    \end{pmatrix}
    \begin{pmatrix}
      u\\
      v\\
      w
    \end{pmatrix},
  \]
  and denote $(u', v', w') \coloneqq L(u,v,w)$. The $2 \times 2$ upper-left block of the matrix is a future-preserving Lorentzian isometry of the Minkowski plane. Therefore, the vector $(u',v')$ is future-directed and timelike. In particular, $L(U) \subset U$. It is easy to see that $L$ is a smooth diffeomorphism, with inverse
  \[
    R(u',v',w') \coloneqq
    \begin{pmatrix}
      \cosh(w') & -\sinh(w') & 0\\
      -\sinh(w') & \cosh(w') & 0\\
      0 & 0 & 1
    \end{pmatrix}
    \begin{pmatrix}
      u'\\
      v'\\
      w'
    \end{pmatrix}.
  \]
  A simple computation shows that
  \[
    (\exp_{\mathbf{0}}^{(-1)} \circ L)(u,v,w) = - \exp_{\mathbf{0}}^{(1)}(u,v,w) = -(-p) = p.
  \]
  Precomposing both sides with $R$ gives, for any $(u',v',w') \in U$,
  \[
    \exp_{\mathbf{0}}^{(-1)}(u',v',w') = - \exp_{\mathbf{0}}^{(1)}(R(u',v',w')),
  \]
  which directly implies that $\exp_{\mathbf{0}}^{(-1)}$ is a smooth diffeomorphism and that its image coincides with $I^-(\mathbf{0})$.
  A slightly more involved calculation shows that for any $(u',v',w') \in U$ and for $p = \exp^{(-1)}(u',v',w')$, it holds that
  \[
    \exp_{\mathbf{0}}^{(t-1)}(u',v',w') = p \ast \exp^{(t)}_{\mathbf{0}}(R(u',v',w')).
  \]
  This identity implies that the curve $t \mapsto \exp^{(t - 1)}_{\mathbf{0}}(u',v',w')$ is a length-maximizing, future-directed geodesic from $p$ to $\mathbf{0}$, since left translations are future-preserving isometries.
\end{proof}

We conclude this section with a short discussion about the {\em cut locus} of points in the Heisenberg group. For any curve \(\gamma\) in \(\heis\) starting at \(t=0\), we say that it is a {\em locally maximizing geodesic} if for any time \(t\) in the domain of \(\gamma\) there exists a neighbourhood on which the restriction of the curve is a maximizing geodesic; we also define its \emph{cut time} as
\[
  t_{\mathrm{cut}}[\gamma] = \sup \{t  \mid \gamma|_{[0,t]} \text{ is a maximizing geodesic}\}.
\]
We then define a {\em cut-reaching} locally maximizing geodesic to be a locally maximizing geodesic \(\gamma\) whose domain is \([0, t_{\mathrm{cut}}[\gamma]]\) and such that there exists no extension to a locally maximizing geodesic \(\sigma\) starting at \(t=0\) with \(t_{\mathrm{cut}}[\gamma] < t_{\mathrm{cut}}[\sigma]\).
The cut locus at $q \in \heis$ is the set
\[
  \mathrm{Cut}(q) \coloneqq \left\{ \gamma(t_{\mathrm{cut}}[\gamma]) \mid \gamma \text{ is a cut-reaching locally maximizing geodesic starting at } q \right\},
\]
From the characterization of geodesics as lift of the isoperimetric Minkowskian problem, we can immediately prove the following proposition:
\begin{proposition}\label{prop:cut_is_empty}
  For any \(q \in \heis\), it holds that \(\mathrm{Cut}(q) = \emptyset\). Equivalently, every locally maximizing geodesic \(\gamma\) starting from \(q\) is actually a maximizing geodesic.
\end{proposition}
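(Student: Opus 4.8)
The plan is to reduce everything to the origin and then exploit the explicit classification of maximizers obtained above. By \cref{rmk:translations_preserve_geodesics}, left translations are isometries that preserve the causal character and time orientation of curves, so they carry (locally) maximizing geodesics to (locally) maximizing geodesics; it therefore suffices to prove $\mathrm{Cut}(\mathbf{0}) = \emptyset$. The strategy is to establish the stronger fact that every locally maximizing geodesic from $\mathbf{0}$ is \emph{globally} maximizing \emph{and} admits a strict prolongation to a longer locally maximizing geodesic. Granting this, no such geodesic can be cut-reaching: its cut time equals the length of its domain, yet it always extends to one with strictly larger cut time, and hence contributes no point to $\mathrm{Cut}(\mathbf{0})$. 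I would begin by fixing a locally maximizing geodesic $\gamma$ from $\mathbf{0}$; on a neighborhood of each parameter it restricts to a maximizing geodesic, which by \cref{corollary:geodesicHiffisopinM} and \cref{thm:isoperimetric_solution} is the lift of either a timelike arc (a straight line or a hyperbola) or of a null broken line. Since these two alternatives correspond to disjoint, locally constant causal characters of $\dot\gamma$, a connectedness argument on the domain shows that $\gamma$ is globally of one single type.

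In the timelike case, $\gamma$ is locally a smooth hyperbola (or line) lift, hence a smooth solution of the geodesic equation; comparing initial data, it coincides with $t \mapsto \exp_{\mathbf{0}}^{(t)}(u,v,w)$ for some $(u,v,w)\in U$. \cref{prop:expdiffeom} states precisely that this curve is length-maximizing on $[0,T]$ for \emph{every} $T>0$. Consequently $\gamma$ is globally maximizing on its whole domain and prolongs to an arbitrarily long maximizing geodesic, so its cut time is never attained and it is never cut-reaching.

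The delicate case, which I expect to be the main obstacle, is the null one, where the smooth exponential machinery is unavailable and the break point must be treated directly. Here $\gamma$ is the lift of a broken lightlike line reaching a point $p \in J^+(\mathbf{0})\setminus I^+(\mathbf{0})$, using the descriptions \eqref{eq:causalfutureof0} and \eqref{eq:chronologicalfutureof0}. Such a $p$ is merely null-separated from $\mathbf{0}$, so $\uptau(\mathbf{0},p)=0$, and the length-zero null lift is trivially maximizing; moreover, attaining the extremal area $\abs{c}=T^2/4$ forces equality throughout the estimates in the proof of \cref{prop:future_set_origin}, so the broken line is in fact the \emph{unique} causal curve to $p$. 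To prove extendability, I would prolong $\gamma$ along its second lightlike direction and verify by a direct computation of the enclosed triangular area that the new endpoint again satisfies $-a^2+b^2+4\abs{c}=0$ and that the extended curve still saturates the maximal-area bound; hence it remains the unique, and therefore maximizing, causal curve to a farther boundary point. This shows that null geodesics also extend indefinitely while staying maximizing.

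Combining the two cases, every locally maximizing geodesic from $\mathbf{0}$ is globally maximizing and strictly extendable, so none is cut-reaching and $\mathrm{Cut}(\mathbf{0})=\emptyset$; by left-invariance, $\mathrm{Cut}(q)=\emptyset$ for all $q\in\heis$. The equivalent reformulation, that locally maximizing geodesics are globally maximizing, is exactly the first half of what is proved. The technical care is concentrated at the null break point and in the prolongation argument, since there the variational and exponential-map tools developed for the timelike case do not directly apply.
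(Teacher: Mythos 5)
Your proposal is correct and takes essentially the same route as the paper's proof: reduction to the origin by left invariance, then a case split in which timelike locally maximizing geodesics are identified with the exponential curves of \cref{prop:expdiffeom} (maximizing for all times) and lightlike ones are the unique causal curves to extremal-area boundary points, which extend linearly while remaining extremal and hence maximizing. Your extra details — the connectedness argument fixing the causal type and the explicit area computation for the null prolongation — simply spell out steps the paper asserts by citing \cref{subsection:variationalisopMink}.
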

\begin{proof}
  Suppose on the contrary that there exists a cut-reaching locally maximizing curve \(\gamma\) starting at \(q\) and a point \(q'\) such that
  \[
    \gamma(t_{\mathrm{cut}}[\gamma]) = q'.
  \]
  We can assume, up to translations, that \(q = \mathbf{0}\). We know that \(\gamma\) is defined on \([0,t_{cut}[\gamma]]\) and that any extension cannot have greater cut time. Since on said interval it is length maximizing, by uniqueness it must be equal to the unique geodesic from \(q\) and \(q'\). If it is lightlike (either a straight line or a broken one), we can just extend it linearly for an arbitrary amount of time; it follows from \Cref{subsection:variationalisopMink} that this extension is the unique causal curve between its endpoints, so it is length maximizing, which contradicts the maximality assumption. If it is timelike, then it coincides with the curve \(t \mapsto \exp^{(t)}(u,v,w)\) for some \((u,v,w) \in \{u > \abs{v}\} \times \R \). However, from \Cref{prop:expdiffeom} we know that such curve is a length maximizing geodesic for all times and thus in particular for \(t > t_{cut}[\gamma]\), hence we get a contradiction. We conclude that \(q'\) does not exist and so \(\mathrm{Cut}(q) = \emptyset\).
\end{proof}
\begin{remark}
  The fact that \(\mathrm{Cut}(q) = \emptyset\) highlights a substantially different behavior of geodesics in the sub-Lorentzian Heisenberg group from those of the sub-Riemannian one: in such case, the cut locus associated to the origin coincides with the $z$-axis. This will also play a crucial role in \Cref{sect:heis_tcd_tmcp} where we will show that the \(\tmcp{K}{N}\) condition cannot hold for any choice of \(K \in \R\) and \(N\geq 1\), since we can use the exponential map to control the transport of volumes at any distance from the origin.
\end{remark}

The authors of \cite{sachkovsachkova23} obtained further results based on the structure of geodesics in the sub-Lorentzian Heisenberg group, which they derived via Pontryagin's maximum principle. Their arguments apply here without modification; the only difference is that in our setting the geodesics are obtained from their characterization through the Lorentzian isoperimetric problem in the Minkowski plane.

\section{Synthetic timelike Curvature-Dimension bounds}

In this section, we present the general framework of measured Lorentzian pre-length spaces and Lorentzian optimal transport, and explain how the sub-Lorentzian Heisenberg group fits into this context.

\subsection{Lorentzian metric spacetime and optimal transport}
\label{section:lorentzianmetricspacetime}
Here, we rely mainly on \cite{kunzingersaemann2018, cavallettimondino2024} for our exposition.
\begin{definition}
  \label{def:lorentzianprelengthspace}
  A {\em Lorentzian pre-length space} $(X, \dis, \ll,\leq, \uptau)$ consists of a metric space $(X, \dis)$ equipped with a preorder $\leq$, a transitive relation $\ll$ contained in $\leq$, and with a lower semicontinuous function $\uptau \colon X \times X \to [0, +\infty]$ such that for all $x, y, z \in X$
  \begin{enumerate}[label=\normalfont(\roman*), topsep=4pt,itemsep=4pt,partopsep=4pt, parsep=4pt]
    \item $\uptau(x, y) = 0$ if $x \not \leq y$;
    \item $\uptau(x, y) > 0$ if and only if $x \ll y$;
    \item $\uptau(x, y) \geq \uptau(x, y) + \uptau(y, z)$ if $x \leq y \leq z$.
  \end{enumerate}
  The map $\uptau$ is referred to as the {\em time-separation function}.
\end{definition}
We endowed $X$ with the metric topology. The relations defining a Lorentzian pre-length space $(X, \dis, \ll,\leq, \uptau)$ encode the concept of causality. We say that $x$ and $y$ in $X$ are {\em causally} (resp. {\em timelike}) related if $x \leq y$ (resp. $x \ll y$). We adopt the same notation as in \cref{eq:causalchronologicalfuturedefinition,eq:causalchronologicaldiamonddefinition,eq:notationA2}, with the understanding that it is now interpreted in the setting of $X$ in place of $\mathbb{H}$.

A curve $\gamma \colon I \to X$, defined on an interval $I$, is said to be ({\em future-directed}) {\em causal} (resp. {\em timelike}) if it is locally Lipschitz with respect to the metric $\dis$ and if for any $s,t \in I$ with $s < t$, it holds that $\gamma(s) \leq \gamma(t)$ (resp. $\gamma(s) \ll \gamma(t)$). A {\em null} (or {\em lightlike}) curve is a causal curve for which no pair of times $s < t$ satisfy $\gamma(s) \ll \gamma(t)$. The Lorentzian length of a causal curve $\gamma : I \to X$ is defined by
\begin{equation}
  \label{eq:L=Ltau}
  L(\gamma) \coloneqq \inf \sum_{i = 0}^{N - 1}\uptau(\gamma(t_i), \gamma(t_{i + 1})),
\end{equation}
where the infinimum is taken over all $N \in \mathbb{N}$ and all finite partitions $\{t_i\}_{i = 0, \dots, N}$ of $I$ satisfying $t_0 < t_1 < \cdots < t_N$. The identity \cref{eq:L=Ltau} is reminiscent of the notion of length induced by a metric $\dis$:
\begin{equation}
  \label{eq:L=Ldis}
  L_{\dis}(\gamma) \coloneqq \sup \sum_{i = 0}^{N - 1}\dis(\gamma(t_i), \gamma(t_{i + 1})).
\end{equation}

\begin{definition}[The causal ladder]
  \label{def:causalladder}
  A Lorentzian pre-length space $(X, \dis, \ll,\leq, \uptau)$ is
  \begin{enumerate}[label=\normalfont(\roman*), topsep=4pt,itemsep=4pt,partopsep=4pt, parsep=4pt]
    \item \emph{strongly causal} if the so-called Alexandrov topology obtained by taking $\{ I^+(x) \cap I^-(y) \mid x, y \in X \}$ as a subbase coincides with the metric topology;
    \item \emph{locally causally closed} if every point \(x \in X\) admits a neighbourhood \(U\) such that the set given by
      \[
        \leq_{\vert\bar{U} \times \bar{U}} \ \subset \ \bar{U} \times \bar{U}
      \]
      is closed in \(\bar{U} \times \bar{U}\);
    \item \emph{causally closed} if $X^2_\le$ is a closed set ($\leq$ is a closed relation in the product topology);
    \item \emph{$\dis$-compatible} if for every $x \in X$, there exists a neighbourhood $\mathcal{U}$ of $x$ and a constant $C > 0$ such that $\L_{\dis}(\gamma) \leq C$ for all causal curves $\gamma$ with values in $\mathcal{U}$;
    \item \emph{non-totally imprisoning} if for all compact subsets $K$ of $X$, there is a uniform bound $C > 0$ for the $\dis$-lengths of causal curves $\gamma$ with values in $K$;
    \item \emph{globally hyperbolic} if it is non-totally imprisoning and also the sets $J^+(x) \cap J^-(y)$ are compact for all $x,y\in X$;
    \item $\mathcal{K}$-\emph{globally hyperbolic} if it is non-totally imprisoning and also the sets $J^+(K_1) \cap J^-(K_2)$ are compact for all compact subsets $K_1, K_2 \subseteq X$.
  \end{enumerate}
\end{definition}

One the one hand, a Lorentzian pre-length space $(X, \dis, \ll,\leq, \uptau)$ is said to be {\em Lorentzian length space} if for all $x, y \in X$ such that $x \leq y$, it holds
\[
  \uptau(x, y) = \sup\{L(\gamma) \mid \gamma : [t_0, t_1] \to X \text{ causal, } \gamma(t_0) = x, \ \gamma(t_1) = y\},
\]
if it is locally causally closed, $I^{\pm}(x) \neq \emptyset$ for all $x \in X$ and if it is timelike path connected.

One the other hand, a Lorentzian pre-length space $(X, \dis, \ll,\leq, \uptau)$ is said to be {\em Lorentzian geodesic space} if it is \emph{localizable} in the sense of \cite[Definition 3.16]{kunzingersaemann2018},  is $\dis$-compatible, and if for all $x, y \in X$ such that $x \ll y$, there exists a {\em maximizer} joining $x$ to $y$, i.e. a future-directed causal curve $\gamma : [t_0, t_1] \to X$ such that $\uptau(x, y) = L(\gamma)$.

The set $\mathrm{Geo}(X)$ of {\em geodesics} contains the continuous maximizers defined on $[0, 1]$ and parametrized proportionally with respect to the Lorentzian length, that is to say
\[
  \mathrm{Geo}(X) \coloneqq \{ \gamma \in C([0, 1], X) \mid \uptau(\gamma(s), \gamma(t)) = (t - s) \uptau(\gamma(0), \gamma(1)), \text{ for all } s < t \}.
\]
The set of timelike geodesics is given by
\[
  \mathrm{TGeo}(X) \coloneqq \{ \gamma \in \mathrm{Geo}(X) \mid \gamma(s) \ll \gamma(t), \text{ for all } s < t \}.
\]
A Lorentzian pre-length space is said to be {\em timelike non-branching} if, for any $\gamma_1, \gamma_2 \in \mathrm{TGeo}(X)$, we have that $\gamma_1 = \gamma_2$ whenever there is $t \in (0, 1)$ such that  $\gamma_1|_{[0, t]} = \gamma_2|_{[0, t]}$ for all $s \in [0, t]$. It will be useful to consider the {\em evaluation map}
\[
  e_t : \mathrm{Geo}(X) \to X : \gamma \mapsto \gamma(t), \quad \forall t \in [0, 1].
\]

The time-separation function of globally hyperbolic Lorentzian length space $(X, \dis, \ll,\leq, \uptau)$ is finite and continuous. It follows that maximizers have a constant $\uptau$-speed parametrizations, and thus any two distinct causally related points are joined by a causal geodesi: a globally hyperbolic Lorentzian length space is a Lorentzian geodesic space.

As we turn our attention to optimal transport, we define a {\em measured Lorentzian pre-length space} $(X, \dis, \m, \ll, \leq, \uptau)$ to be a Lorentzian pre-length space equipped with a Radon measure $\m$. We will use the notation $\Prob(X)$, $\Prob_{\mathrm{c}}(X)$, $\Prob_{\mathrm{c}}^{\mathrm{ac}}(X, \m)$ for the set of Borel probability measures, the set of compactly supported probability measures and the set of probability measures absolutely continuous with respect to $\m$, respectively.

The set of \emph{transport plans} between two probability measures $\mu, \nu \in \Prob(X)$ is given by
\[
  \Pi(\mu, \nu) \coloneqq \left\{ \pi \in \Prob(X \times X) \mid (P_1)_\sharp \pi = \mu, (P_2)_\sharp \pi = \nu \right\},
\]
where $P_1,P_2:X \times X \to X$ are the projections onto the respective coordinate, i.e. $P_1(x, y) \coloneqq x$ and $P_2(x, y) \coloneqq y$, for all $x, y \in X$. We also introduce the set of {\em causal}  (respectively {\em chronological}) transport plans from $\mu$ to $\nu$ as
\[
  \Pi_\leq(\mu, \nu) \coloneqq \left\{ \pi \in \Pi(\mu,\nu) \mid \pi(X^2_{\leq}) = 1 \right\}, \ \ \Pi_\ll(\mu, \nu) = \left\{ \pi \in \Pi(\mu,\nu) \mid \pi(X^2_{\ll}) = 1 \right\}.
\]
For $p \in (0, 1]$, the $p$-Lorentz-Wasserstein distance between $\mu$ and $\nu$ is defined as
\begin{equation}
  \label{eq:LorentzWasserstein}
  \ell_p(\mu, \nu) \coloneqq \left[ \sup_{\pi \in \Pi_\leq(\mu, \nu)} \int_{X \times X} \uptau(x, y)^p \diff \pi(x, y) \right]^{1/p}, \quad \text{ if } \Pi_\le(\mu,\nu) \neq \emptyset.
\end{equation}
By convention, we set $\ell_p(\mu,\nu) = -\infty$ if $\Pi_\le(\mu,\nu) = \emptyset$. A plan $\pi$ realizing the supremum in \cref{eq:LorentzWasserstein} is called an \emph{optimal transport plan} from $\mu$ to $\nu$. The set of optimal plans between $\mu$ and $\mu$ will be denoted by $\Pi_{\leq}^{p, \mathrm{opt}}(\mu, \nu)$. We say that a pair $(\mu, \nu) \in \Prob(X)$ is {\em timelike $p$-dualizable} if the set
\[
  \Pi_{\ll}^{p, \mathrm{opt}}(\mu, \nu) \coloneqq \Pi_{\leq}^{p, \mathrm{opt}}(\mu, \nu) \cap \Pi_{\ll}(\mu, \nu)
\]
is not-empty and if there exist $a, b : X \to \R$ with $a \oplus b \in L^1(\mu \oplus \nu)$ and $\uptau(x, y)^p \leq a(x) + b(y)$ for all $(x, y) \in \mathrm{supp}(\mu) \times \mathrm{supp}(\nu)$. An element of this set is said to {\em $p$-dualize} $(\mu, \nu)$.

\begin{remark}
  \label{eq:compactsupportpdualizable}
  If the Lorentzian pre-length space is globally hyperbolic and $(\mu, \nu) \in \Prob_{\mathrm{c}}(X)$ and $\mathrm{supp}(\mu) \times \mathrm{supp}(\nu) \subset X_{\ll}^2$, then the pair is timelike $p$-dualizable for any $p \in (0, 1]$
  , see \cite[Remark 2.20]{cavallettimondino2024}.
\end{remark}

The Lorentz-Wasserstein distance $\ell_p$ acts as a time-separation function on the space of probability measures $\Prob(X)$. In particular, it satisfies the reverse triangle inequality:
\[
  \ell_p(\mu_0, \mu_1) + \ell_p(\mu_1, \mu_2) \leq \ell_p(\mu_0, \mu_2), \qquad \forall \mu_0, \mu_1, \mu_2 \in \mathcal{P}(X),
\]
where by convention we set $\infty - \infty = - \infty$. We say that $(\mu_t)_{t \in \interval{0}{1}} \subseteq \Prob(X)$ is an {\em $\ell_p$-geodesic} if
\begin{equation}
  \label{eq:Wassersteingeodesic}
  \ell_p(\mu_s, \mu_t) = (t - s) \ell_p(\mu_0, \mu_1), \qquad \text{ for all } t, s \in \interval{0}{1} \text{ with } s \leq t.
\end{equation}
It is a {\em timelike $\ell_p$-geodesic} if $\ell_p(\mu_s, \mu_t) > 0$ for all $s < t$.

For $\mu_0, \mu_1 \in \Prob(X)$, we denote by $\mathrm{OptTGeo}_p(\mu_0, \mu_1)$ the set {\em timelike $p$-optimal dynamical plan}, i.e. the set consisting of all $\nu \in \Prob(\mathrm{TGeo}(X))$ such that $(e_0, e_1)_{\sharp} \nu \in \Pi_{\ll}^{p, \mathrm{opt}}(\mu, \nu)$. It is a standard fact that if $\nu \in \Prob(\mathrm{OptTGeo}(X))$, then $\mu_t \coloneqq (e_t)\sharp \nu$ is a timelike $\ell_p$-geodesic; and vice versa, any timelike $\ell_p$-geodesic $(\mu_t)_{t \in [0, 1]}$ can be lifted to a $p$-optimal dynamical plan $\nu \in \mathrm{OptTGeo}_p(\mu_0, \mu_1)$.

\subsection{\texorpdfstring{The $\mathsf{TCD}$ and $\mathsf{TMCP}$ conditions}{The TCD and TMCP conditions}}
\label{section:TCDTMCP}
We introduce the synthetic curvature-dimension conditions with the $N$-Rényi entropy, following \cite{Braun2023a}.

Given a probability measure $\mu \in \mathscr{P}(X)$, the  entropy $\Ent(\mu|\m)$ is defined by
\[
  \Ent(\mu|\m) \coloneqq \int_{X} \rho \log(\rho) \diff \m,
\]
if $\mu = \rho \m$ is absolutely continuous with respect to $\m$ and $(\rho\log(\rho))_{+}$ is  $\m$-integrable;  otherwise we set $\Ent(\mu|\m) = +\infty$.
For $N \in [1, +\infty)$, the $N$-Rényi entropy $S_{N}(\mu_t | \m)$ is given by
\[
  S_{N}(\mu | \m) \coloneqq \int_X\rho^{1-\frac{1}{N}} \diff \m,
\]
if $\mu = \rho \m$ is absolutely continuous with respect to $\m$;  otherwise we set $S_{N}(\mu | \m) = +\infty$.

We also introduce the distortion coefficients from the Lott--Sturm--Villani theory of synthetic curvature-dimension bounds \cite{LV-Ricci, S-ActaI, S-ActaII}.
Given $K\in \R$, $N \in [1, +\infty)$, we set $\tau_{K, N}^{(t)}(\theta)$ for $(t, \theta) \in [0, 1] \times [0, \infty]$ by
\begin{equation*}
  \tau_{K, N}^{(t)}(\theta) \coloneqq
  \begin{cases}
    +\infty                                                                                                                                & K\theta^2 \geq (N-1)\pi^2 \text{ and } K > 0, \\
    t^{\frac 1 N} \left(\frac{\sin\left(t\theta \sqrt{K/(N-1)}\right)}{\sin\left(\theta \sqrt{K/(N-1)}\right)}\right)^{1 - \frac 1 N}       & 0<K\theta^2<(N-1)\pi^2,    \\
    t                                                                                                                                       & K\theta^2 =0, \text{ or } K<0 \text{ and } N=1,              \\
    t^{\frac 1 N} \left(\frac{\sinh\left(t\theta \sqrt{|K|/(N-1)}\right)}{\sinh\left(\theta \sqrt{|K|/(N-1)}\right)}\right)^{1 - \frac 1 N} & K\theta^2<0.
  \end{cases}
\end{equation*}

The following definition, extending the well-known curvature-dimension condition from metric geometry to the Lorentzian setting, is found in \cite[Definition 3.2]{Braun2023a}, together with its infinite-dimensional analogue in \cite[Definition 4.1]{Braun2023}.

\begin{definition}[Timelike curvature-dimension]\label[definition]{def:TCD}
  Let $p \in (0, 1)$, $K \in \R$, and $N \in [1, +\infty]$. A measured Lorentzian pre-length space $(X, \dis, \m, \leq, \ll, \uptau)$ satisfies the timelike curvature-dimension condition $\TCD_p(K, N)$ if for every timelike $p$-dualizable pair $(\mu_0, \mu_1) = (\rho_0 \m, \rho_1 \m) \in \mathscr{P}_{\mathrm{c}}^{\mathrm{ac}}(X, \m)$, there exists $\nu \in \mathrm{OptTGeo}_p(\mu_0, \mu_1)$ such that for all $t \in [0, 1]$ and $N' \geq N$, we have
  \[
    S_{N'}(\mu_t | \m) \leq - \int_{X \times X}  \tau_{K, N'}^{(1 - t)}(\uptau(x, y)) \rho_0(x)^{-1/N'} + \tau_{K, N'}^{(t)}(\uptau(x, y)) \rho_1(y)^{-1/N'} \diff \pi(x, y),
  \]
  if $N < +\infty$, and, if $N = +\infty$,
  \[
    \Ent(\mu_t | \m) \leq (1 - t) \Ent(\mu_0|\m) + t \Ent(\mu_1 | \m) - \frac{K}{2} t(1 - t) ||\uptau||_{L^2(\pi)},
  \]
  where $\mu_t \coloneqq (e_t)_\sharp \nu$ and $\pi \coloneqq (e_0, e_1)_\sharp \nu$.
\end{definition}

Of course, one can show that the condition $\mathsf{TCD}_p(K, N)$ for a fixed $N \in [1, +\infty)$ implies $\mathsf{TCD}_p(K, +\infty)$, see \cite[Lemma 3.8]{Braun2023a}. The main motivation behind the definition of the $\mathsf{TCD}$ conditions is that it provides a complete characterization of lower bounds on Ricci curvature and upper bounds on dimension in the smooth Lorentzian setting. As shown in \cite[Theorem 3.1]{cavallettimondino2024}, \cite[Theorem 3.35, Theorem A.1]{Braun2023a} and \cite[Theorem 2.15]{clemenscones},
the following are equivalent on a Lorentzian manifold $(M, g)$ equipped with its natural causal structure, time-separation function $\uptau$, and volume measure $\mathsf{vol}_g$:
\begin{enumerate}[label=\normalfont(\roman*), topsep=4pt,itemsep=4pt,partopsep=4pt, parsep=4pt]
  \item $\mathrm{Ric}_g(v, v) \geq - K g(v, v)$ for every timelike $v \in \T(M)$, and $\mathrm{dim}(M) \leq N$;
  \item $(M, \dis_g, \mathsf{vol}_g, \ll, \leq, \uptau)$ satisfies $\mathsf{TCD}_{p}(K,N)$.
\end{enumerate}
In other words, \cref{def:TCD} generalizes the concept of Ricci lower bounds to abstract measured Lorentzian pre-length spaces.

Given  $A_0,A_1\subset X$ and $t\in [0,1]$, we define the set of timelike $t$-intermediate points between $A_0$ and $A_1$ as
\begin{equation}
  \label{eq:tintermediate}
  Z_t(A_0, A_1) \coloneqq A_t \coloneqq \{\gamma_t \mid \gamma\in \mathrm{TGeo}(X),\, \gamma_0 \in A_0,\, \gamma_1\in A_1\}.
\end{equation}
Furthermore, we also set
\begin{align}\label{Eq:THETA}
  \Theta \coloneqq
  \begin{cases} \sup\uptau(A_0\times A_1) & \textnormal{if }K<0,\\
    \inf\uptau(A_0\times A_1) & \textnormal{otherwise}.
  \end{cases}
\end{align}

\begin{definition}[Brunn-Minkowki inequality]
  \label[definition]{def:Brunn-Minkowski}
  Let $p \in (0, 1)$, $K \in \R$, and $N \in [1, +\infty]$. A measured Lorentzian pre-length space $(X, \dis, \m, \leq, \ll, \uptau)$ satisfies the timelike Brunn-Minkowki inequality $\mathsf{TBM}_p(K, N)$ if for every relatively compact Borel sets $A_0,A_1\subset X$ with  $\m(A_0)\,\m(A_1) >0$ such that
  \begin{align*}
    (\mu_0,\mu_1) \coloneqq (\m(A_0)^{-1}\,\m\mres A_0,\m(A_1)^{-1}\, \m\mres A_1).
  \end{align*}
  is timelike $p$-dualizable, it holds that for every $t\in [0,1]$ and every $N'\geq N$,
  \begin{equation}
    \label{Eq:BM}
    \m(A_t)^{1/N'} \geq \tau_{K,N'}^{(1-t)}(\Theta)\,\m(A_0)^{1/N'} + \tau_{K,N'}^{(t)}(\Theta)\,\m(A_1)^{1/N'},
  \end{equation}
  if $N < +\infty$, and, if $N = +\infty$,
  \begin{equation}
    \label{Eq:BMinfty}
    \begin{split}
      \log\left[ \frac{1}{\m(A_t)} \right] \leq (1 - t) \log\left[ \frac{1}{\m(A_0)} \right] &+ t \log\left[ \frac{1}{\m(A_1)} \right] - \frac{K t (1 - t)}{2} \inf_{(x, y) \in A_0 \times A_1} \uptau(x, y)^2.
    \end{split}
  \end{equation}
\end{definition}

It is well-known that the curvature-dimension condition implies the Brunn-Minkowski inequality, see \cite[Proposition 3.11]{Braun2023a} (and also \cite[Proposition 3.4]{cavallettimondino2024} for the entropic version).

\begin{proposition}[$\mathsf{TCD}_{p}(K,N) \Rightarrow \mathsf{TBM}_p(K,N)$]\label{prop:TCDtoTBM}
  Let $p \in (0, 1)$, $K \in \R$, and $N \in [1, +\infty]$. The  $\mathsf{TCD}_{p}(K,N)$ condition implies $\mathsf{TBM}_p(K,N)$ for measured Lorentzian pre-length spaces.
\end{proposition}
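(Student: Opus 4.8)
The plan is to run the classical ``curvature-dimension implies Brunn--Minkowski'' argument, now with the Lorentzian entropy functionals, and to let the case distinction in the definition of $\Theta$ absorb the sign of $K$. First I would reduce to normalised uniform measures. Given relatively compact Borel sets $A_0,A_1$ with $\m(A_0)\,\m(A_1)>0$, note that $\m(A_i)<\infty$ since $\m$ is Radon and $A_i$ is relatively compact, so $\mu_i\coloneqq \m(A_i)^{-1}\,\m\mres A_i=\rho_i\m$ are well-defined probability measures with $\rho_0\equiv\m(A_0)^{-1}$ on $A_0$ and $\rho_1\equiv\m(A_1)^{-1}$ on $A_1$. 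By the hypothesis of \cref{def:Brunn-Minkowski} the pair $(\mu_0,\mu_1)$ is timelike $p$-dualizable, so $\mathsf{TCD}_p(K,N)$ provides a plan $\nu\in\mathrm{OptTGeo}_p(\mu_0,\mu_1)$ for which, writing $\mu_t\coloneqq(e_t)_\sharp\nu$ and $\pi\coloneqq(e_0,e_1)_\sharp\nu$, the entropy inequality of \cref{def:TCD} holds for every $t\in[0,1]$ and every $N'\geq N$. Crucially, the \emph{same} $\nu$ serves all $N'$, so it suffices to extract the Brunn--Minkowski bound from this single inequality for each fixed $N'$.

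Second I would localise the interpolant. Since $\pi$ has marginals $\mu_0,\mu_1$, for $\nu$-a.e. geodesic $\gamma$ one has $\gamma_0\in A_0$ and $\gamma_1\in A_1$, whence $\gamma_t\in Z_t(A_0,A_1)=A_t$; thus $\mu_t(A_t)=1$. Applying Jensen's inequality to the concave map $r\mapsto r^{1-1/N'}$ over $A_t$ (on which $\mu_t$ has total mass $1$) controls the Rényi entropy $S_{N'}(\mu_t\mid\m)$ by $\m(A_t)^{1/N'}$ on the side opposite to the $\mathsf{TCD}$ bound, so that the two estimates combine; the case where $\mu_t$ is not absolutely continuous is disposed of directly by the convention fixing $S_{N'}$ on singular measures. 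For $N=\infty$ the identical scheme with the convex map $r\mapsto r\log r$ gives $\Ent(\mu_t\mid\m)\geq\log[1/\m(A_t)]$.

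Third I would evaluate the right-hand side. Because $\pi$ is concentrated on $A_0\times A_1$ and $\rho_0,\rho_1$ are positive constants there, one has $\rho_0(x)^{-1/N'}=\m(A_0)^{1/N'}$ and $\rho_1(y)^{-1/N'}=\m(A_1)^{1/N'}$ for $\pi$-a.e. $(x,y)$, so these factors pull out of the integral. It then remains to replace the variable argument $\uptau(x,y)$ inside the distortion coefficients by the single value $\Theta$: here I would invoke the monotonicity of $\theta\mapsto\tau_{K,N'}^{(t)}(\theta)$, which is nondecreasing for $K>0$, nonincreasing for $K<0$, and constant for $K=0$. The definition of $\Theta$ as $\inf\uptau(A_0\times A_1)$ when $K\geq 0$ and $\sup\uptau(A_0\times A_1)$ when $K<0$ is exactly what makes $\tau_{K,N'}^{(t)}(\uptau(x,y))\geq\tau_{K,N'}^{(t)}(\Theta)$ hold $\pi$-a.e.; integrating the resulting constant bound against the probability measure $\pi$ produces precisely the terms $\tau_{K,N'}^{(1-t)}(\Theta)\m(A_0)^{1/N'}+\tau_{K,N'}^{(t)}(\Theta)\m(A_1)^{1/N'}$, and chaining with the Jensen estimate of the previous step yields \eqref{Eq:BM}. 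The $N=\infty$ case is parallel: the linear interpolation reproduces $(1-t)\log[1/\m(A_0)]+t\log[1/\m(A_1)]$, while the same sign considerations in $K$ let one estimate $\lVert\uptau\rVert_{L^2(\pi)}^2$ by the extremal value of $\uptau^2$ over $A_0\times A_1$ appearing in \eqref{Eq:BMinfty}.

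The hard part is not the algebra but the measure-theoretic bookkeeping around $A_t$ and the singular part of the interpolant. The set $Z_t(A_0,A_1)$ need not be Borel, so one must argue that it is analytic --- hence universally measurable --- in order that $\m(A_t)$ be meaningful and the Jensen step legitimate; one must also check that the convention for $S_{N'}$ (resp. $\Ent$) on measures with a singular component still delivers the inequality in the required direction. Carefully tracking the orientation of every inequality in the regime $K<0$, where the distortion coefficients \emph{decrease} in $\theta$, is the other delicate point, and is exactly what the supremum in the definition of $\Theta$ is designed to handle.
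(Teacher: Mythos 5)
Your argument is correct in substance, and it is precisely the standard proof that the paper itself does not spell out: the paper quotes \cref{prop:TCDtoTBM} from \cite[Proposition 3.11]{Braun2023a} (and \cite[Proposition 3.4]{cavallettimondino2024} for the entropic version) without proof, and your three steps --- Jensen's inequality after localizing $\mu_t$ on $A_t = Z_t(A_0,A_1)$, constancy of the densities $\rho_0,\rho_1$ on $A_0,A_1$, and the monotonicity of $\theta \mapsto \tau_{K,N'}^{(t)}(\theta)$ combined with the case distinction built into $\Theta$ in \eqref{Eq:THETA} --- are exactly the ingredients of the cited proofs, including your (correct) observation that $Z_t(A_0,A_1)$ is in general only analytic, hence universally measurable rather than Borel. (One reading note: the paper's definition of $S_{N'}$ omits the minus sign present in \cite{Braun2023a}, so the $\mathsf{TCD}$ inequality must be read with the orientation you implicitly used; your Jensen step is on the correct side once that convention is fixed.)

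One step, however, does not go through as you stated it. For $N = \infty$ and $K < 0$, the map $\theta \mapsto -\tfrac{K}{2}\,t(1-t)\,\theta^2$ is \emph{increasing}, so from $\|\uptau\|_{L^2(\pi)}^2 \geq \inf_{A_0 \times A_1} \uptau^2$ you obtain $-\tfrac{K}{2}\,t(1-t)\,\|\uptau\|_{L^2(\pi)}^2 \geq -\tfrac{K}{2}\,t(1-t)\inf_{A_0\times A_1}\uptau^2$, an inequality pointing the wrong way: the $\mathsf{TCD}(K,\infty)$ upper bound on $\Ent(\mu_t|\m)$ cannot be dominated by the right-hand side of \eqref{Eq:BMinfty} as printed. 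What does follow when $K<0$ is \eqref{Eq:BMinfty} with $\sup_{A_0 \times A_1} \uptau^2$ in place of the infimum (equivalently, with $\Theta^2$ as in \eqref{Eq:THETA}), exactly mirroring the $K$-dependent $\inf$/$\sup$ switch you correctly invoked in the finite-$N'$ case. Your phrase ``the extremal value of $\uptau^2$'' is the right instinct, but as a derivation of the displayed \eqref{Eq:BMinfty}, whose infimum is unconditional in $K$, the step fails for $K<0$; the defect lies in the statement of \eqref{Eq:BMinfty} (which should carry the same case distinction as \eqref{Eq:THETA}) rather than in the transport-theoretic core of your argument, and for $K \geq 0$ your derivation is complete.
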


Next, we define the timelike measure contraction property, a variant of the timelike-curvature dimension. This condition was introduced in various forms in see \cite[Definition 3.7]{cavallettimondino2024}, \cite[Definition 4.1]{Braun2023a}, or \cite[Definition 2.12]{clemenscones}, for example.

\begin{definition}[Timelike measure-contraction properties]\label{Def: TMCP}
  Let $p\in(0,1)$, $K\in\R$, and $N\in[1, +\infty)$. We say that $(X, \dis, \m, \ll, \leq, \uptau)$ satisfies the \emph{future timelike measure contraction property} $\mathsf{TMCP}^+_p(K,N)$ if for every $\mu_0=\rho_0\,\m\in\mathscr{P}^{\mathrm{ac}}_{\mathrm{c}}(X)$ and every $x_1\in X$ with $\mu_0(I^-(x_1))=1$, there exists $\nu \in \mathrm{OptTGeo}_p(\mu_0, \mu_1)$ such that for all $t \in [0, 1]$ and $N' \geq N$, we have
  \begin{equation}
    \begin{aligned}\label{Eq:TMCP+}
      \int_X\rho_t^{1-\frac{1}{N'}} \diff \m\geq\int_{X^2}\tau_{K,N'}^{(1-t)}(\uptau(x,x_1))\rho_0(x)^{-\frac{1}{N'}} \diff \mu_0(x), \qquad \forall t \in [0, 1),
    \end{aligned}
  \end{equation}
  where we have written $(e_t)_\sharp \nu = \rho_t \m + \mu_t^{\mathrm{sing}}$ with $\mu_t^{\mathrm{sing}} \perp \m$. We say that $X$ satisfies the \emph{past} $\mathsf{TMCP}_p^-(K,N)$ if \cref{Eq:TMCP+} holds with $\mu_1=\rho_1\,\m$ and $\mu_0=\delta_{x_0}$ for some $x_0\in X$ with $\mu_1(I^+(x_0))=1$, i.e.,
  \begin{equation}
    \begin{aligned}\label{Eq:TMCP-}
      \int_X\rho_t^{1-\frac{1}{N'}}\,\diff \m\geq\int_{X^2}\tau_{K,N'}^{(t)}(\uptau(x_0,x))\rho_1(x)^{-\frac{1}{N'}} \diff \mu_1(x).
    \end{aligned}
  \end{equation}
  Finally, we say that $X$ satisfies the \emph{timelike measure contraction property} $\mathsf{TMCP}_p(K,N)$ if it satisfies both the conditions $\mathsf{TMCP}_p^+(K,N)$ and $\mathsf{TMCP}_p^-(K,N)$.
\end{definition}

The timelike measure contraction property is a \emph{weaker} variant of the timelike curvature-dimension condition, as shown in \cite[Proposition 4.9]{Braun2023a}.

\begin{proposition}[$\mathsf{TCD}_{p}(K,N) \Rightarrow \mathsf{TMCP}_p(K,N)$]\label{prop:TCDtoTMCP}
  Fix $p\in (0,1)$, $K\in \R$, $N\in [1,\infty)$. The $\mathsf{TCD}_{p}(K,N)$ condition implies $\mathsf{TMCP}_p(K,N)$ for globally hyperbolic Lorentzian geodesic spaces.
\end{proposition}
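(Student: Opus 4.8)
The plan is to obtain $\mathsf{TMCP}_p(K,N)$ as the degenerate case of $\mathsf{TCD}_p(K,N)$ in which one of the two endpoint marginals collapses to a Dirac mass, mirroring the classical metric scheme by which $\mathsf{CD}$ implies $\mathsf{MCP}$. I will only treat the future condition $\mathsf{TMCP}_p^+(K,N)$, since $\mathsf{TMCP}_p^-(K,N)$ follows by the same argument applied to the time-reversed structure. So fix $\mu_0 = \rho_0\m\in\Prob_{\mathrm c}^{\mathrm{ac}}(X)$ and $x_1\in X$ with $\mu_0(I^-(x_1))=1$, and aim to produce $\nu\in\mathrm{OptTGeo}_p(\mu_0,\delta_{x_1})$ realizing \eqref{Eq:TMCP+}. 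Reducing first to the case $\supp(\mu_0)\subset I^-(x_1)$ compact (the general case is recovered afterwards by exhausting the open set $I^-(x_1)$ by compacts and using $\mu_0(I^-(x_1))=1$), I would approximate $\delta_{x_1}$ by normalized restrictions $\mu_1^\epsilon=\rho_1^\epsilon\m$ of $\m$ to sets $E_\epsilon$ of vanishing $\m$-measure shrinking to $x_1$ inside $I^+(\supp\mu_0)$. Because $X$ is globally hyperbolic and $\supp(\mu_0)\times\supp(\mu_1^\epsilon)\subset X^2_\ll$, each pair $(\mu_0,\mu_1^\epsilon)$ is timelike $p$-dualizable by \cref{eq:compactsupportpdualizable}.

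Next I would apply $\mathsf{TCD}_p(K,N)$ to each pair $(\mu_0,\mu_1^\epsilon)$, obtaining optimal dynamical plans $\nu^\epsilon$ with $\mu_t^\epsilon=(e_t)_\sharp\nu^\epsilon$ and $\pi^\epsilon=(e_0,e_1)_\sharp\nu^\epsilon$, together with the defining entropy inequality controlling $S_{N'}(\mu_t^\epsilon|\m)$ by the two distortion terms built from $\tau_{K,N'}^{(1-t)}(\uptau(x,y))\rho_0(x)^{-1/N'}$ and $\tau_{K,N'}^{(t)}(\uptau(x,y))\rho_1^\epsilon(y)^{-1/N'}$. All geodesics involved remain in the fixed compact causal diamond $J(\supp\mu_0,\overline{\supp\mu_1^{\epsilon_0}})$, so $\{\nu^\epsilon\}$ is tight and, after passing to a subsequence, converges weakly to some $\nu\in\Prob(\mathrm{TGeo}(X))$; then $\mu_t^\epsilon\rightharpoonup\mu_t:=(e_t)_\sharp\nu$ and $\pi^\epsilon\rightharpoonup\pi:=(e_0,e_1)_\sharp\nu$. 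Since every $\pi^\epsilon$ is concentrated on $X^2_\ll$ and the second marginals concentrate at $x_1$, the limit $\pi$ is chronological with second marginal $\delta_{x_1}$, and stability of $p$-optimality under weak limits in globally hyperbolic spaces gives $\nu\in\mathrm{OptTGeo}_p(\mu_0,\delta_{x_1})$.

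It then remains to pass to the limit in the three terms. The term carrying $\rho_1^\epsilon$ is the one that dies: bounding $\tau_{K,N'}^{(t)}$ by its maximum over the compact diamond and integrating out, it is controlled by $S_{N'}(\mu_1^\epsilon|\m)=\m(E_\epsilon)^{1/N'}$, which tends to $0$ precisely because $N'<+\infty$. The remaining distortion term converges to $\int_X\tau_{K,N'}^{(1-t)}(\uptau(x,x_1))\rho_0(x)^{-1/N'}\diff\mu_0(x)$ by dominated convergence, using continuity of $\uptau$ and of $\tau_{K,N'}$ on the compact diamond together with the concentration of the $y$-marginal at $x_1$ and the fixed first marginal $\mu_0$. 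Finally, writing $\mu_t=\rho_t\m+\mu_t^{\mathrm{sing}}$, the upper semicontinuity of the Rényi functional $S_{N'}$ under weak convergence yields $S_{N'}(\mu_t|\m)\geq\limsup_\epsilon S_{N'}(\mu_t^\epsilon|\m)$; combining these three facts turns the family of $\mathsf{TCD}$ inequalities into exactly \eqref{Eq:TMCP+} for the limit plan $\nu$.

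The main obstacle is this limiting step for the entropy: one must secure the correct direction of semicontinuity for $S_{N'}$ along the concentrating sequence and, simultaneously, the stability of $p$-optimality and of the chronological relation under the weak limit, so that $\nu$ is a genuine timelike optimal dynamical plan for the degenerate pair $(\mu_0,\delta_{x_1})$. The dualizability bookkeeping needed to pass from compactly supported $\mu_0\subset I^-(x_1)$ to a general $\mu_0$ with $\mu_0(I^-(x_1))=1$ is a secondary technical point, handled by an inner exhaustion of $I^-(x_1)$.
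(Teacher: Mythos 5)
The paper does not actually prove this proposition: it is imported from \cite[Proposition 4.9]{Braun2023a}, so the only meaningful comparison is with that reference, and your proposal essentially reconstructs its argument (approximation of $\delta_{x_1}$ by normalized restrictions of $\m$, application of $\mathsf{TCD}_p(K,N)$ to the approximating pairs, compactness of the dynamical plans, and semicontinuity of the R\'enyi functional in the limit). Your scheme is correct, but three points deserve repair or emphasis. First, the distortion term carrying $\rho_1^\epsilon$ need not ``die'': in the positive-entropy form in which $\mathsf{TMCP}$ is stated here, the $\mathsf{TCD}$ inequality reads $\int_X (\rho_t^\epsilon)^{1-1/N'}\diff\m \geq \int \tau_{K,N'}^{(1-t)}(\uptau(x,y))\,\rho_0(x)^{-1/N'}\diff\pi^\epsilon + \int \tau_{K,N'}^{(t)}(\uptau(x,y))\,\rho_1^\epsilon(y)^{-1/N'}\diff\pi^\epsilon$ (the printed \cref{def:TCD} carries a sign slip inherited from the negative-entropy convention of \cite{Braun2023a}), and the second term is nonnegative, so it can simply be discarded; this is more robust than your vanishing argument, which fails when $\m$ has an atom at $x_1$, since then $\m(E_\epsilon)\not\to 0$. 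Second, your appeal to ``stability of the chronological relation under the weak limit'' is not valid as stated: $X^2_\ll$ is an open set, so concentration on it does not pass to weak limits. It is also unnecessary: once the second marginal is $\delta_{x_1}$, the limit coupling is forced to be $\mu_0\otimes\delta_{x_1}$, which is chronological precisely because, after your reduction, $\supp(\mu_0)\subseteq I^-(x_1)$, and it is trivially $p$-optimal because it is the only coupling of its marginals; what genuinely needs the globally hyperbolic machinery is tightness of $(\nu^\epsilon)$ (limit-curve theorem, after equi-Lipschitz reparametrization) and the fact that uniform limits of $\uptau$-affine curves remain $\uptau$-affine, which uses continuity of $\uptau$. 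Third, your construction requires $\m(E_\epsilon)>0$, i.e. $x_1\in\supp(\m)$; this is harmless under the standing convention $\supp(\m)=X$ of \cite{Braun2023a,cavallettimondino2024}, but since \cref{Def: TMCP} as stated quantifies over all $x_1 \in X$, this reduction should be made explicit. With these adjustments your proof is complete and coincides, in substance, with the cited one.
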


The $\mathsf{TMCP}$ is also compatible with the smooth Lorentzian setting. Indeed, given a globally hyperbolic Lorentzian manifold $(M, \dis_g, \mathsf{vol}_g, \ll, \leq, \uptau)$ with $\dim M = n$, then by \cite[Theorem A.4]{Braun2023a} and \cite[Appendix A]{cavallettimondino2024}:

\begin{enumerate}[label=\normalfont(\roman*), topsep=4pt,itemsep=4pt,partopsep=4pt, parsep=4pt]
  \item $(M, \dis_g, \mathsf{vol}_g, \ll, \leq, \uptau)$ satisfies $\mathsf{TMCP}_{p}(K,n)$ if and only if $\mathrm{Ric}_g(v, v) \geq - K g(v, v)$ for every timelike $v \in \T(M)$;
  \item If $(M, \dis_g, \mathsf{vol}_g, \ll, \leq, \uptau)$ satisfies $\mathsf{TCD}_{p}(K,N)$, then $n \leq N$.
\end{enumerate}

The timelike measure contraction property is related to a kind of ``half Brunn-Minkoswki inequality'', when one of the two sets in \cref{def:Brunn-Minkowski} is a point.

\begin{definition}[Half Brunn-Minkowki inequality]
  \label{def:halfBrunn-Minkowski}
  Let $p \in (0, 1)$, $K \in \R$, and $N \in [1, +\infty)$. A measured Lorentzian pre-length space $(X, \dis, \m, \leq, \ll, \uptau)$ satisfies the \emph{future} (resp. \emph{past}) half timelike Brunn-Minkowki inequality $\mathsf{TBM}^{1/2+}_p(K, N)$ (resp. $\mathsf{TBM}^{1/2-}_p(K, N)$) if for every relatively compact Borel sets $A\subset X$ with  $\m(A) >0$ and every $x \in I^+(A)$ (resp. $x \in I^-(A)$) such that
  \begin{align*}
    (\mu_0,\mu_1) \coloneqq (\m(A)^{-1}\,\m\mres A,\delta_{x}) \qquad \text{  (resp. $(\delta_{x}, \m(A)^{-1}\,\m\mres A)$)}
  \end{align*}
  is timelike $p$-dualizable, it holds that, for every $t\in [0,1)$,
  \begin{equation}
    \label{Eq:halfBM}
    \m(A_t)^{1/N} \geq \tau_{K,N'}^{(1-t)}(\Theta)\,\m(A)^{1/N} \qquad \text{  (resp. $\m(A_t)^{1/N} \geq \tau_{K,N}^{(t)}(\Theta)\,\m(A)^{1/N}$)}.
  \end{equation}
  We say that $X$ satisfies the \emph{half timelike Brunn-Minkowski inequality} $\mathsf{TBM}^{1/2}_p(K,N)$ if it satisfies both the conditions $\mathsf{TBM}^{1/2+}_p(K, N)$ and $\mathsf{TBM}^{1/2-}_p(K, N)$.
\end{definition}

The following implications can be easily proven, with an argument nearly identical to \cite[Proposition 3.11]{Braun2023a}.

\begin{proposition}[$\mathsf{TMCP}_{p}(K,N) \Rightarrow \mathsf{TBM}^{1/2}_p(K,N)$]\label{prop:TMCP->TBM1/2}
  Let $p \in (0, 1)$, $K \in \R$, and $N \in [1, +\infty)$. The  $\mathsf{TMCP}_{p}(K,N)$ condition (resp. $\mathsf{TMCP}^+_{p}(K,N)$, $\mathsf{TMCP}^-_{p}(K,N)$) implies $\mathsf{TBM}^{1/2}_p(K,N)$ (resp. $\mathsf{TBM}^{1/2+}_{p}(K,N)$, $\mathsf{TBM}^{1/2-}_{p}(K,N)$) for measured Lorentzian pre-length spaces.
\end{proposition}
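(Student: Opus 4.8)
The plan is to follow the strategy of \cite[Proposition 3.11]{Braun2023a}, testing the measure contraction property directly against the uniformly distributed measures that appear in the half Brunn--Minkowski inequality. I will establish the future implication $\mathsf{TMCP}^+_p(K,N) \Rightarrow \mathsf{TBM}^{1/2+}_p(K,N)$; the past implication is entirely analogous, reversing the roles of the two marginals and using the coefficient $\tau_{K,N}^{(t)}$ in place of $\tau_{K,N}^{(1-t)}$, and the two-sided statement then follows by combining the two. So fix a relatively compact Borel set $A \subset X$ with $\m(A) > 0$ and a point $x \in I^+(A)$ for which $(\mu_0, \mu_1) \coloneqq (\m(A)^{-1}\m\mres A, \delta_{x})$ is timelike $p$-dualizable. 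Since $x \in I^+(A)$ we have $\mu_0(I^-(x)) = 1$, so $\mathsf{TMCP}^+_p(K,N)$ applies with $\rho_0 = \m(A)^{-1}\mathds{1}_A$ and $x_1 = x$, producing $\nu \in \mathrm{OptTGeo}_p(\mu_0,\mu_1)$ for which \eqref{Eq:TMCP+} holds, where $(e_t)_\sharp\nu = \rho_t\m + \mu_t^{\mathrm{sing}}$.

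The two sides of \eqref{Eq:halfBM} are then controlled separately. For the left-hand side, note that $\nu$ is concentrated on timelike geodesics running from $A$ to $x$, so $\mu_t = (e_t)_\sharp\nu$ is concentrated on the set of $t$-intermediate points $A_t = Z_t(A,\{x\})$ of \eqref{eq:tintermediate}; in particular $\rho_t$ vanishes $\m$-almost everywhere off $A_t$. Hölder's inequality on $A_t$, with conjugate exponents $\tfrac{N'}{N'-1}$ and $N'$, then gives
\[
  \int_X \rho_t^{1-\frac{1}{N'}}\diff\m = \int_{A_t} \rho_t^{1-\frac{1}{N'}}\diff\m \leq \left(\int_{A_t}\rho_t\diff\m\right)^{1-\frac{1}{N'}}\m(A_t)^{\frac{1}{N'}} \leq \m(A_t)^{\frac{1}{N'}},
\]
since $\int_{A_t}\rho_t\diff\m \leq \mu_t(X) = 1$. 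For the right-hand side, $\rho_0 \equiv \m(A)^{-1}$ on $A$ gives $\rho_0(\cdot)^{-1/N'} = \m(A)^{1/N'}$, while the monotonicity of the distortion coefficient in its argument yields $\tau_{K,N'}^{(1-t)}(\uptau(y,x)) \geq \tau_{K,N'}^{(1-t)}(\Theta)$ for $\mu_0$-a.e.\ $y$: for $K \geq 0$ the map $\theta \mapsto \tau_{K,N'}^{(1-t)}(\theta)$ is nondecreasing and $\Theta = \inf\uptau(A\times\{x\}) \leq \uptau(y,x)$, whereas for $K < 0$ it is nonincreasing and $\Theta = \sup\uptau(A\times\{x\}) \geq \uptau(y,x)$. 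Substituting these bounds into \eqref{Eq:TMCP+} shows that its right-hand side is at least $\tau_{K,N'}^{(1-t)}(\Theta)\,\m(A)^{1/N'}$; combining with the displayed estimate and specializing to $N' = N$ produces exactly \eqref{Eq:halfBM}.

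The substantive content is thus a direct substitution, and the only delicate points are measure-theoretic or elementary. First, $A_t$ need not be Borel, but it is the image of a Borel set under the continuous evaluation map $e_t$, hence analytic and therefore universally measurable, so that $\m(A_t)$ is well defined (equivalently one reads it as an outer measure, which leaves the inequality intact). Second, one must verify the two monotonicity claims for $\theta \mapsto \tau_{K,N'}^{(t)}(\theta)$; these reduce to the elementary facts that $a \mapsto \sin(ta)/\sin(a)$ is increasing on $(0,\pi)$ and $a \mapsto \sinh(ta)/\sinh(a)$ is decreasing on $(0,\infty)$ for each fixed $t \in (0,1)$. I expect the main point requiring care—exactly as in the metric analogue—to be the correct matching of these two monotonicity regimes with the two branches in the definition of $\Theta$ in \eqref{Eq:THETA}, so that the single uniform estimate $\tau_{K,N'}^{(1-t)}(\uptau(y,x)) \geq \tau_{K,N'}^{(1-t)}(\Theta)$ is valid in all cases.
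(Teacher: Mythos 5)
Your proof is correct and is exactly the argument the paper intends: the paper does not write out a proof at all, but simply refers to \cite[Proposition 3.11]{Braun2023a}, and the strategy you carry out (test \eqref{Eq:TMCP+} against the uniform/Dirac pair, bound the R\'enyi entropy by $\m(A_t)^{1/N'}$ via H\"older on $A_t$, and bound the distortion coefficient uniformly by matching its monotonicity in $\theta$ with the two branches in the definition \eqref{Eq:THETA} of $\Theta$) is precisely the argument being cited.

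One step needs repair. You justify $\mu_0(I^-(x)) = 1$ by writing ``since $x \in I^+(A)$''; this is a non sequitur, as $x \in I^+(A)$ only asserts that \emph{some} point of $A$ chronologically precedes $x$, not that $\m$-almost every point of $A$ does. The correct source of this fact is the timelike $p$-dualizability hypothesis that you have already invoked: since $\mu_1 = \delta_x$, the only coupling of $\mu_0$ and $\mu_1$ is the product $\mu_0 \otimes \delta_x$, so the existence of a chronological coupling forces $(\mu_0 \otimes \delta_x)(X^2_\ll) = 1$, which is exactly $\mu_0(I^-(x)) = 1$ (note that $X^2_\ll = \{\uptau > 0\}$ is open by lower semicontinuity of $\uptau$, so $I^-(x)$ is Borel and this is meaningful). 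With that substitution the application of $\mathsf{TMCP}^+_p(K,N)$ is legitimate and the rest of your argument goes through unchanged.
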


We have the following properties when a measured Lorentzian pre-length space $(X, \dis, \m, \leq, \ll, \uptau)$ satisfies the $\mathsf{TCD}_p(K, N)$ condition (resp. the $\mathsf{TMCP}_p(K, N)$):

\begin{enumerate}[label=\normalfont(\roman*), topsep=4pt,itemsep=4pt,partopsep=4pt, parsep=4pt]
  \item \textbf{Consistency:} $(X, \dis, \m, \leq, \ll, \uptau)$ satisfies $\mathsf{TCD}_p(K', N')$ (resp. $\mathsf{TMCP}_p(K', N')$) for every $K' \leq K$ and $N' \geq N$;
  \item \textbf{Scaling:} The rescaled measured Lorentzian pre-length space $(X, a \cdot \dis, b \cdot \m, \leq, \ll, c \cdot \uptau)$ satisfies $\mathsf{TCD}_p(K/c^2, N)$ (resp. $\mathsf{TMCP}_p(K/c^2, N)$) for every $a, b, c > 0$.
\end{enumerate}

\subsection{The metric spacetime structure of the Heisenberg group}
\label{section:compatibility}

Recall that the \emph{sub-Riemannian structure} on $\heis$ is induced by considering the Riemannian metric $g$ on $\Delta$ uniquely determined by the conditions
\[
  g(X,X)=1, \quad g(X,Y) = 0, \quad g(Y,Y) = 1.
\]
The \emph{sub-Riemannian distance} is then obtained as
\[
  \dis(p, q) \coloneqq \inf\left\{ L_{\mathrm{sR}}(\gamma) \mid \gamma : [0, T] \to \heis \text{ is horizontal, joining $p$ to $q$} \right\},
\]
where
\[
  L_{\mathrm{sR}}(\gamma) \coloneqq \int_0^T \sqrt{g_{\mathrm{sR}}(\dot \gamma(s), \dot \gamma(s))} \diff s
\]
is the \emph{sub-Lorentzian length} of a horizontal curve $\gamma$. We refer the reader to \cite{Agrachev2020} for a comprehensive introduction to sub-Riemannian geometry and the Heisenberg group.

The sub-Lorentzian Heisenberg group, equipped with the relations $\leq$, $\ll$ and the time-separation function $\uptau$ introduced in \cref{section:subLgeometry}, and with the sub-Riemannian distance $\dis$ is a Lorentzian pre-length space. More precisely, we have the following.

\begin{theorem}
  The sub-Lorentzian Heisenberg group $(\heis, \dis, \ll, \leq, \uptau)$ is a $\mathcal{K}$-globally hyperbolic and timelike non-branching Lorentzian geodesic space.
\end{theorem}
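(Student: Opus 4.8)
The plan is to verify, one at a time, the properties in the definitions of \cref{def:lorentzianprelengthspace} and of the causal ladder, exploiting throughout the explicit description of the causal structure in \cref{eq:causalfutureof0,eq:chronologicalfutureof0}, the exponential map of \cref{prop:expdiffeom}, and left-invariance. First I would check that $(\heis,\dis,\ll,\leq,\uptau)$ is a Lorentzian pre-length space: $(\heis,\dis)$ is a metric space inducing the Euclidean topology, $\leq$ is a preorder, $\ll$ is transitive and contained in $\leq$, axiom (i) holds by definition of $\uptau$, and (iii) is the reverse triangle inequality noted in \cref{section:subLgeometry}. Axiom (ii), that $\uptau(p,q)>0$ if and only if $p\ll q$, together with lower semicontinuity of $\uptau$, I would read off the explicit time-separation obtained from the isoperimetric solution: by left-invariance $\uptau(\mathbf 0,\cdot)$ is continuous, vanishes on the null boundary $J^+(\mathbf 0)\setminus I^+(\mathbf 0)$ (where the maximizer is a null broken line of zero length), and is strictly positive exactly on $I^+(\mathbf 0)$.

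The technical engine for the remaining properties is a single elementary estimate. A future-directed causal curve $\gamma=(x,y,z)$ satisfies $\dot x=u>0$ and $\abs{\dot y}=\abs{v}\leq u=\dot x$ almost everywhere, so its sub-Riemannian speed obeys $\sqrt{u^2+v^2}\leq\sqrt 2\,\dot x$ and therefore
\[
  L_{\dis}(\gamma)\ \leq\ \sqrt 2\,\bigl(x(t_1)-x(t_0)\bigr).
\]
Because the $x$-displacement of a causal curve with image in a set $A$ is bounded by the $x$-diameter of $A$, this bound immediately gives both $\dis$-compatibility (choosing bounded neighbourhoods) and the non-total-imprisoning property (choosing $A$ compact).

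Next I would establish compactness of the causal diamonds $J^+(K_1)\cap J^-(K_2)$ for compact $K_1,K_2$, the main geometric step. For boundedness, if $p_1\leq q\leq p_2$ with $p_i\in K_i$, then monotonicity of the $x$-coordinate gives $x(p_1)\leq x(q)\leq x(p_2)$, the estimate above bounds $\abs{y(q)-y(p_1)}\leq x(q)-x(p_1)$ and hence $y(q)$, and writing $p_1^{-1}\ast q=(x',y',z')$ the description \cref{eq:causalfutureof0} yields $4\abs{z'}\leq x'^2-y'^2\leq x'^2$, after which the polynomial group law recovers $z(q)$ from bounded quantities. For closedness I would use that $J^+(\mathbf 0)$ is closed and the group operations are continuous: along $q_n\to q$ with $p_{1,n}\leq q_n\leq p_{2,n}$, compactness of $K_1,K_2$ extracts limits $p_{1,n}\to p_1$, $p_{2,n}\to p_2$, and closedness of $J^+(\mathbf 0)$ passes the relations to the limit; the same argument shows $\leq$ is a closed relation, so the space is (locally) causally closed. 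Together with non-total imprisonment this proves $\mathcal K$-global hyperbolicity, and in particular ordinary global hyperbolicity.

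It then remains to upgrade to a geodesic space and to verify timelike non-branching. For the former I would show the space is a Lorentzian length space: the identity $\uptau(p,q)=\sup_\gamma L(\gamma)$ matching the sub-Lorentzian $\uptau$ to the synthetic length is exactly the compatibility established in \cref{section:compatibility}, while $I^{\pm}(x)\neq\emptyset$ and timelike path-connectedness follow from left-invariance and nonemptiness of $I^+(\mathbf 0)$, and local causal closedness was just obtained. A globally hyperbolic Lorentzian length space is a Lorentzian geodesic space, which supplies localizability, the remaining two requirements ($\dis$-compatibility and the existence of maximizers between timelike-related points, the latter furnished directly by the geodesics $t\mapsto\exp_{\mathbf 0}^{(t)}(u,v,w)$ of \cref{prop:expdiffeom}) being already in hand. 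Finally, for timelike non-branching, suppose $\gamma_1,\gamma_2\in\mathrm{TGeo}(\heis)$ agree on $[0,t_0]$ with $t_0\in(0,1)$; translating so that both start at $\mathbf 0$, each is a constant-$\uptau$-speed reparametrization of a curve $t\mapsto\exp_{\mathbf 0}^{(t)}(u_i,v_i,w_i)$, which is real-analytic in $t$, so agreement on an interval forces $(u_1,v_1,w_1)=(u_2,v_2,w_2)$ and hence $\gamma_1=\gamma_2$; this is the dynamical counterpart of the empty cut locus of \cref{prop:cut_is_empty}. The one genuinely non-formal point is the compactness of the diamonds, specifically the control of the vertical coordinate $z$—the exact feature distinguishing the sub-Lorentzian from a smooth Lorentzian structure—which is handled by the explicit form of $J^+(\mathbf 0)$.
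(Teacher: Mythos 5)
Your proposal is correct, and its underlying strategy---exploit the explicit form \cref{eq:causalfutureof0,eq:chronologicalfutureof0} of the causal cones, the isoperimetric characterization of maximizers, and the exponential map---is the same as the paper's. The difference is one of self-containedness: the paper's proof is largely citation-based (it defers the whole of $\mathcal{K}$-global hyperbolicity ``word for word'' to \cite[Proposition 18]{borza2025}, and gets timelike non-branching and the geodesic property by pointing at \cref{maintheorem1:geodesicssubLHeisenberg}), whereas you actually supply the arguments: the estimate $L_{\dis}(\gamma)\le\sqrt{2}\,(x(t_1)-x(t_0))$ for future-directed causal curves, which simultaneously yields $\dis$-compatibility and non-total imprisonment; the boundedness-plus-closedness proof of compactness of $J^+(K_1)\cap J^-(K_2)$ via monotonicity of $x$, the cone inclusion for $y$, and the group law for $z$; and the injectivity/analyticity argument turning uniqueness of maximizers into non-branching of $\mathrm{TGeo}(\heis)$ (where, incidentally, analyticity is not even needed: $\exp_{\mathbf{0}}^{(t_0)}(u,v,w)=\exp_{\mathbf{0}}^{(1)}(t_0u,t_0v,t_0w)$ and injectivity of $\exp_{\mathbf{0}}^{(1)}$ on $U$ already force equality of the parameters). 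Two fine points deserve care in a final write-up. First, you assert that $\uptau(\mathbf{0},\cdot)$ is \emph{continuous}; for the pre-length-space axioms only \emph{lower} semicontinuity is required, and that is what you actually get cheaply ($\uptau(\mathbf{0},\cdot)$ vanishes outside the open set $I^+(\mathbf{0})$, is nonnegative, and equals the smooth function $\sqrt{u^2-v^2}\circ(\exp_{\mathbf{0}}^{(1)})^{-1}$ on $I^+(\mathbf{0})$); continuity up to the null boundary is a stronger statement that in this framework comes only afterwards, from global hyperbolicity. Second, your localizability step leans on the assertion, recalled in \cref{section:lorentzianmetricspacetime}, that a globally hyperbolic Lorentzian length space is a Lorentzian geodesic space; that is exactly how the paper closes this gap too (it checks localizability via $\dis$-compatibility, continuity of $\uptau$, and existence of maximizers), so you are consistent with the paper, but the step is an appeal to the synthetic machinery of \cite{kunzingersaemann2018} rather than something your explicit estimates prove.
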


\begin{proof}
  This result first appeared in \cite[Section 2.3]{borza2025}, where the argument relies on the characterization of geodesics via Pontryagin's Maximum Principle, as initiated in \cite{sachkovsachkova23}. We now briefly explain how the theorem also follows from our approach through the Minkowskian isoperimetric problem. That the structure $(\heis, \dis, \ll, \leq, \uptau)$ is a Lorentzian pre-length space follows directly from the construction in \cref{section:subLgeometry}, except for the ``if'' clause in \cref{def:lorentzianprelengthspace}. Specifically, one must rule out the existence of a pair of points that are positively $\uptau$-separated but cannot be connected by a timelike curve. This is excluded by \Cref{prop:future_set_origin}, which asserts that any two points at positive distance can indeed be joined by a timelike curve. Moreover, \cref{maintheorem1:geodesicssubLHeisenberg} shows that $(\heis, \dis, \ll, \leq, \uptau)$ is a timelike non-branching geodesic space. The proof of $\mathcal{K}$-global hyperbolicity proceeds word for word as in \cite[Proposition 18]{borza2025}, relying on the explicit expressions for the causal and chronological diamonds in \cref{eq:causalfutureof0,eq:chronologicalfutureof0}. The only remaining property to verify is localizability, which holds trivially since the space is $\dis$-compatible, $\uptau$ is continuous by $\mathcal{K}$-global hyperbolicity together with the proof of \cite[Theorem 3.28]{kunzingersaemann2018}, and length maximizers between causally related points always exist.
\end{proof}

The bridge between the synthetic and smooth settings is provided by \cite[Proposition 19]{borza2025}: a curve $\gamma : [0, T] \to \heis$ is future-directed causal (resp. timelike) if and only if it is locally Lipschitz with respect to $\dis$ and causal (resp. timelike) in the synthetic sense, i.e. $\gamma(s) \leq \gamma(t)$ (resp. $\gamma(s) \ll \gamma(t)$) for all $s, t \in [0, T]$ with $s < t$. The following theorem improves this result.

\begin{theorem}
  Let $\gamma : [0, T] \to \heis$ be a causal
  curve in the synthetic sense, i.e. $\gamma(s) \leq \gamma(t)$
  for all $s, t \in [0, T]$ with $s < t$. Then, $\gamma$ is differentiable almost everywhere and at every differentiability time $t \in [0, T]$, either $\dot \gamma(t)$ is a (horizontal) future-directed causal
  vector or it vanishes. If $\gamma$ is also continuous, then there exists a reparametrization that is Lipschitz in charts, so this reparametrization of $\gamma$ is causal
  in the sub-Lorentzian sense and locally Lipschitz with respect to $\dis$.
\end{theorem}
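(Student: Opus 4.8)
The plan is to translate the synthetic causality condition $\gamma(s) \le \gamma(t)$ into explicit inequalities on the coordinates of $\gamma = (x,y,z)$ and then read off the regularity. By left-invariance \cref{eq:leqLeftTranslations}, one has $\gamma(s) \le \gamma(t)$ if and only if $\gamma(s)^{-1} \ast \gamma(t) \in J^+(\mathbf{0})$. Writing $\Delta x \coloneqq x(t)-x(s)$ and similarly for $\Delta y, \Delta z$, a direct computation of the group product gives $\gamma(s)^{-1}\ast\gamma(t) = \bigl(\Delta x,\, \Delta y,\, \Delta z - \tfrac12(x(s)\Delta y - y(s)\Delta x)\bigr)$, so that the description \cref{eq:causalfutureof0} of $J^+(\mathbf{0})$ becomes, for all $s<t$,
\[
  \Delta x \ge 0 \quad\text{and}\quad -\Delta x^2 + \Delta y^2 + 4\,\abs{\Delta z - \tfrac12(x(s)\Delta y - y(s)\Delta x)} \le 0.
\]
The second inequality immediately yields $\abs{\Delta y} \le \Delta x$, so both $x+y$ and $x-y$ are nondecreasing; together with the monotonicity of $x$ this exhibits $x$ and $y$ as differences of monotone functions, hence both are differentiable almost everywhere by Lebesgue's theorem. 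Note that every differentiability point of $x$ is in particular a continuity point of $x$.

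Next I would upgrade this to differentiability of $z$. At a common differentiability point $t_0$ of $x$ and $y$, I set $s=t_0$ and let $t\to t_0^+$ (and symmetrically $t=t_0$, $s\to t_0^-$). The remainder in the area bound is controlled by $4\,\abs{\Delta z - \tfrac12(x(s)\Delta y - y(s)\Delta x)} \le \Delta x^2$, and since $x$ is continuous and differentiable at $t_0$ the quadratic term satisfies $\Delta x^2/\abs{t-t_0} = \Delta x\cdot(\Delta x/\abs{t-t_0}) \to 0$. Dividing by $t-t_0$ and passing to the limit shows that $z$ is differentiable at $t_0$ with $\dot z(t_0) = \tfrac12\bigl(x(t_0)\dot y(t_0) - y(t_0)\dot x(t_0)\bigr)$. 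This is exactly the horizontality identity, so $\dot\gamma(t_0) = \dot x(t_0)\,X + \dot y(t_0)\,Y$ is horizontal. Finally, dividing $\abs{\Delta y}\le\Delta x$ by $t-t_0>0$ gives $\abs{\dot y(t_0)}\le\dot x(t_0)$ with $\dot x(t_0)\ge 0$; hence either $\dot x(t_0)>0$, in which case $g(\dot\gamma,\dot\gamma)=-\dot x^2+\dot y^2\le 0$ and $g(\dot\gamma,X)=-\dot x<0$ so $\dot\gamma(t_0)$ is future-directed causal, or $\dot x(t_0)=0$, which forces $\dot y(t_0)=0$ and, through the horizontality identity, $\dot z(t_0)=0$, so that $\dot\gamma(t_0)=0$. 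This settles the first assertion.

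Assume now that $\gamma$ is continuous, so that $x$ is continuous and nondecreasing. On any interval where $x$ is constant the displayed inequalities force $\Delta y = \Delta z = 0$, hence $\gamma$ is constant there; this permits reparametrizing $\gamma$ by its first coordinate, setting $\tilde\gamma(\xi) \coloneqq \gamma(h(\xi))$ for $\xi \in [x(0), x(T)]$, where $h$ is a nondecreasing right inverse of $x$ (the value of $\tilde\gamma$ being independent of the choice of $h$ on the flat intervals, and $\gamma = \tilde\gamma \circ x$). By construction $\tilde x(\xi) = \xi$, and $\abs{\tilde y(\xi_2)-\tilde y(\xi_1)} \le \tilde x(\xi_2)-\tilde x(\xi_1) = \xi_2-\xi_1$, so the first two coordinates are $1$-Lipschitz. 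For the third coordinate, writing $s=h(\xi_1)$, $t=h(\xi_2)$ and using $\abs{\Delta y}\le\Delta x=\xi_2-\xi_1$, the area bound gives
\[
  \abs{\tilde z(\xi_2)-\tilde z(\xi_1)} \le \tfrac12\bigl(\abs{x(s)}+\abs{y(s)}\bigr)\Delta x + \tfrac14\Delta x^2 \le C\,(\xi_2-\xi_1),
\]
where $C$ depends only on $\max_{[0,T]}(\abs{x}+\abs{y})$ and on $x(T)-x(0)$. Hence $\tilde\gamma$ is Lipschitz in the chart $\R^3$.

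It remains to identify $\tilde\gamma$ as a sub-Lorentzian causal curve. Since $\tilde\gamma$ is synthetically causal (as $h$ is monotone and $\gamma$ is causal) and Lipschitz in charts, the first part of the theorem applies to $\tilde\gamma$ and shows its almost-everywhere derivative is horizontal; being Lipschitz with horizontal derivative a.e., $\tilde\gamma$ is an absolutely continuous horizontal curve, so on $[\xi_1,\xi_2]$ its sub-Riemannian length equals $\int\sqrt{\dot{\tilde x}^2+\dot{\tilde y}^2}\,\diff\xi\le\sqrt2\,(\xi_2-\xi_1)$, which makes $\tilde\gamma$ locally Lipschitz with respect to $\dis$. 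The characterization of \cite{borza2025} recalled above then yields that this reparametrization is causal in the sub-Lorentzian sense. I expect the main difficulty to lie in the $z$-coordinate: a priori the only control on $\Delta z$ is the quadratic area constraint, which naively suggests merely Hölder-$1/2$ behaviour. The key point is that this constraint pins $\Delta z$ to the bilinear area term $\tfrac12(x\Delta y - y\Delta x)$ up to a genuinely quadratic remainder, so that both the almost-everywhere horizontality identity and the Lipschitz estimate after reparametrizing by $x$ come out with the correct linear scaling.
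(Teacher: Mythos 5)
Your proof is correct. The pointwise computation at differentiability times---left-translating to get $-\gamma(s)\ast\gamma(t)\in J^+(\mathbf{0})$, dividing the resulting inequality by $t-s$, and letting $t\to s^+$ to extract both horizontality and the causal character of $\dot\gamma$---is the same as the paper's (its \cref{eq:ineqcausalsmoothsynthetic}). Where you genuinely diverge is in how the two regularity claims are obtained. The paper proves a.e.\ differentiability and the existence of a Lipschitz reparametrization by extending the sub-Lorentzian metric to a Lorentzian metric $g$ on all of $\T(\heis)$ with $g(Z,Z)=1$, observing that $(\heis,g)$ is strongly causal and that synthetically causal curves for the sub-Lorentzian order are also causal for $g$, and then invoking \cite[Theorem A.1]{octet} and \cite[Lemma 2.7]{cavmondmanini}. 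You avoid the auxiliary metric and both citations: monotonicity of $x\pm y$ (from $\abs{\Delta y}\le\Delta x$) plus Lebesgue's theorem gives a.e.\ differentiability of $x$ and $y$, and the quadratic pinching $\abs{\Delta z-\tfrac12(x(s)\Delta y-y(s)\Delta x)}\le\Delta x^2/4$ then forces $z$ to be differentiable at every common differentiability point of $x$ and $y$, which is even slightly stronger than the stated claim. Likewise, your reparametrization by the first coordinate (legitimate because $\gamma$ is constant on any interval where $x$ is) replaces the second citation by an explicit construction with a computable Lipschitz constant, and has the bonus that $\dot{\tilde\gamma}_1\equiv 1$ rules out the vanishing alternative, so sub-Lorentzian causality of the reparametrized curve follows at once (your appeal to the characterization from \cite{borza2025} works, though the first part of the theorem applied to $\tilde\gamma$ already suffices). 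What the paper's route buys is brevity and placement of the result inside general Lorentzian causality theory, so the regularity step transfers to other sub-Lorentzian structures admitting such a compatible Lorentzian extension; what yours buys is a fully elementary, self-contained, quantitative argument, tied to the explicit description of $J^+(\mathbf{0})$ in the Heisenberg group---which, however, the horizontality step of both proofs uses anyway.
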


\begin{proof}

  Denote by $g$ the Lorentzian metric on $\heis$ satisfying $g(X, X) = -1$, $g(Y, Y) = 1$ and $g(Z, Z) = 1$. The Lorentzian metric $g$ naturally extends the sub-Lorentzian metric. It is not difficult to see that the Lorentzian manifold $(\heis, g)$ is strongly causal. If $\gamma$ is a causal curve, in the synthetic sense, with respect to the sub-Lorentzian structure $(\heis, \leq, \ll)$, it is also causal in the synthetic sense with respect to the Lorentzian one, i.e. with respect to the causal structure $(\heis, \leq_g, \ll_g)$ induced from $g$. By \cite[Theorem A.1]{octet}, we deduce that $\gamma$ is differentiable at almost every time. The fact that there is a Lipschitz-in-charts reparametrization also follows from this argument using $g$, by \cite[Lemma 2.7]{cavmondmanini}.

  Let \(s\) be a point where \(\gamma\) is differentiable (and thus continuous), and consider any $t > s$. Since $\gamma(s)\le \gamma(t)$ by assumption, left invariance and \cref{eq:causalfutureof0} implies that
  \[
    -\gamma(s)\ast \gamma(t)\in\{x\ge 0\}\cap\{-x^2 + y^2 +4\abs{z}\le 0\},
  \]
  which, written out explicitly, means that
  \begin{equation*}
    -(\gamma_1(t)-\gamma_1(s) )^2 + (\gamma_2(t)-\gamma_2(s))^2 +4 \abs{\gamma_3(t) -\gamma_3(s) + \frac{1}{2} \left(\gamma_1(t)\gamma_2(s) -\gamma_1(s)\gamma_2(t) \right)} \leq 0.
  \end{equation*}
  Dividing this expression by \(t-s\), which is a positive number, does not alter the inequality. We can algebraically manipulate the expression to get
  \begin{equation}\label{eq:ineqcausalsmoothsynthetic}
    \begin{split}
      -(t-s) &\left(  \frac{\gamma_1(t) -\gamma_1(s)}{t-s} \right)^2 + (t-s) \left( \frac{\gamma_2(t) -\gamma_2(s)}{t-s} \right)^2 \\
      &+4 \abs{ \frac{\gamma_3(t) - \gamma_3(s)}{t-s} +  \frac{1}{2} \left( \gamma_2(t)\frac{\gamma_1(t) - \gamma_1(s)}{t-s} - \gamma_1(t) \frac{\gamma_2(t) - \gamma_2(s)}{t-s} \right) } \leq 0.
    \end{split}
  \end{equation}
  The first two terms have a similar structure: they have a first factor \((t-s)\) which goes to zero as \(t \to s^+\), whereas the second factors go to \(\dot{\gamma_1}(s)^2\) and \(\dot{\gamma_2}(s)^2\) respectively, so overall the first two terms vanish. Hence sending \(t \to s^+\) in \cref{eq:ineqcausalsmoothsynthetic} yields that
  \[
    4\abs{\dot{\gamma_3}(s) + \frac{1}{2} \left( \gamma_2(s)\dot{\gamma_1}(s) - \gamma_1(s)\dot{\gamma_2}(s) \right)} \leq 0,
  \]
  which implies, as the absolute value is non negative, that
  \[
    \dot{\gamma_3}(s) = \frac{1}{2} \left( \gamma_1(s)\dot{\gamma_2}(s) - \gamma_2(s)\dot{\gamma_1}(s) \right).
  \]
  But we know this condition is equivalent to horizontality for curves starting at the origin, so the curve is horizontal.

  It remains to prove that $\gamma$ is future-directed and causal. The last term in \cref{eq:ineqcausalsmoothsynthetic} is non-negative, thus
  \[
    -(-\gamma_1(s) + \gamma_1(t))^2 + (-\gamma_2(s) + \gamma_2(t))^2 \leq 0.
  \]
  Dividing by \((t-s)^2\) and sending \(t \to s^+\), we immediately see that
  \[
    -\dot{\gamma_1}(s)^2 + \dot{\gamma_2}(s)^2 \leq 0,
  \]
  which is the needed condition for causality. The point \(-\gamma(s) \ast \gamma(t)\) must be in the set \(\{x \geq 0\}\), so in particular
  \[
    -\gamma_1(s) + \gamma_1(t) \geq 0,
  \]
  with equality if and only if \(\gamma(s) = \gamma(t)\), by \eqref{eq:ineqcausalsmoothsynthetic}.  Dividing by \(t - s\) and sending \(t \to s^+\), we get that
  \[
    \dot{\gamma_1}(s) \geq 0,
  \]
  which means that the derivative of \(\gamma_1\) either vanishes, in which case by causality and horizontality we have that \(\dot{\gamma}(s)=0\), or it is future-directed and thus concludes the proof.
\end{proof}

If we supplement $(\heis, \dis, \ll, \leq, \uptau)$ with a measure $\m$, we obtain a measured Lorentzian structure. The question of which natural measures to consider on the sub-Lorentzian Heisenberg group is addressed in the next section.

\section{Lorentzian Hausdorff measure of the Heisenberg group}
\label{section:hausdorffdimension}

This section is devoted to the study of the Lorentzian Hausdorff measure for Lorentzian pre-length spaces, introduced in \cite{McCann2022}, in the specific case of the sub-Lorentzian Heisenberg group.

\begin{definition}[{\cite[Definitions 2.3 \& 3.1]{McCann2022}}]
  Let $(X, \dis, \m, \leq, \ll, \uptau)$ be a Lorentzian pre-length space, $d \geq 0$, and $\delta > 0$. The {\em $d$-dimensional $\delta$-Lorentzian Hausdorff pre-measure $\mathcal{H}^d_{\uptau,\delta}$} is defined, for any Borel subset $E \subset X$, by
  \[
    \mathcal{H}^d_{\uptau,\delta}(E)  \coloneqq \,  \sup \left\{ \sum_i \omega_d \, \uptau(p_i,q_i)^d \ \big| \ \{J(p_i,q_i)\}_{i \in \mathbb{N}} \ \text{is a cover of} \ E,\, \operatorname{diam}(J(p_i,q_i))<\delta \right\},
  \]
  where \(\omega_d\) is the volume of the unit-diamond in the \(d\)-dimensional Minkowski spacetime. The {\em \(d\)-dimensional Lorentzian Hausdorff $\mathcal{H}^d_\uptau(E)$ measure} is given, for any Borel subset $E \subset X$, by
  \[
    \mathcal{H}^d_\uptau(E) \coloneqq \lim_{\delta \to 0^+} \mathcal{H}^d_{\uptau,\delta}(E).
  \]
  The \emph{Lorentzian Hausdorff dimension} is then defined as
  \[
    \dim_{\uptau}(X) \coloneqq \inf\left\{ d \geq 0 \ \mid \ \mathcal{H}^d_\uptau(X) < +\infty \right\}.
  \]
\end{definition}

\begin{remark}
  By \cite[Proposition 2.4]{McCann2022}, we know that the map $\delta \mapsto \mathcal{H}^d_{\uptau,\delta}(E)$ is monotonically nonincreasing, that $\mathcal{H}^d_{\uptau,\delta}$ is an outer measure, and that $\mathcal{H}^d_\uptau$ is a Borel measure. It is also proven in \cite[Theorem 4.8]{McCann2022} that, on a smooth Lorentzian manifold $(M, g)$, the measure $\mathcal{H}^d_{\uptau,\delta}$ coincides with the natural volume measure $\mathrm{vol}_g$. Further properties of the Lorentzian Hausdorff dimension can be found in \cite[Section 3]{McCann2022}.
\end{remark}

A measure $\m$ on $\heis$ is \emph{left-invariant} if $(L_q)_\sharp \m = \m$ for all $q \in \heis$. On the sub-Lorentzian Heisenberg group, we thus have the following three natural measures:
\begin{enumerate}[label=\normalfont(\roman*), topsep=4pt,itemsep=4pt,partopsep=4pt, parsep=4pt]
  \item the 3-dimensional Lebesgue measure \(\mathcal{L}^3\), which is a Haar measure;
  \item the \(d\)-dimensional Hausdorff measure \(\mathcal{H}^d\) induced by the sub-Riemannian distance \(\dis\), for which it is known that the (metric) Hausdorff dimension is \(\dim_{\dis}(\heis) = 4\) and that for such choice of dimension it is a Haar measure (i.e \(\mathcal{H}^4 = c\mathcal{L}^3\) for some positive constant \(c>0\), see \cite[Section 2.2.3]{Capogna2007} or \cite[Section 3.3]{LeDonneBook});
  \item the \(d\)-dimensional Lorentzian Hausdorff measure \(\mathcal{H}^d_\uptau\).
\end{enumerate}

The aim of this section is to prove the following result.
\begin{theorem}\label{thm:lorentzian_hausdorff}
  The Lorentzian Hausdorff dimension of the sub-Lorentzian Heisenberg group $(\heis, \dis, \leq, \ll, \uptau)$ is $4$. Moreover, the measure \(\hausdorff^4\) is a Haar measure, i.e. there exists a constant \(c>0\) such that \(\hausdorff^4 = c\mathcal{L}^3\).
\end{theorem}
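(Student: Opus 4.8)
The plan is to identify $\hausdorff^4$ with a Haar measure. Concretely, I would show that $\hausdorff^4$ is a nonzero, left-invariant Radon measure that is finite on bounded sets; since $\heis$ is a connected Lie group whose Haar measure is $\mathcal{L}^3$ (unique up to a multiplicative constant), this forces $\hausdorff^4 = c\,\mathcal{L}^3$ for some $c \in (0,+\infty)$. The equality $\dim_{\uptau}(\heis)=4$ then falls out of the same two-sided comparison $\hausdorff^4 \asymp \mathcal{L}^3$.

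First I would record the symmetries of the pre-measure $\prehausdorff{\delta}^4$. Each left translation $L_a$ is simultaneously a $\uptau$-isometry and a $\dis$-isometry, and it maps diamonds to diamonds, $L_a(J(p,q)) = J(a\ast p, a\ast q)$, preserving both the weights $\omega_4\,\uptau(p,q)^4$ and the $\dis$-diameters; hence $\prehausdorff{\delta}^4(L_a E) = \prehausdorff{\delta}^4(E)$, and $\hausdorff^4$ is left-invariant. Likewise the dilations obey $\uptau(\delta_\lambda p, \delta_\lambda q) = \lambda\,\uptau(p,q)$, $\dis(\delta_\lambda p, \delta_\lambda q) = \lambda\,\dis(p,q)$ and $\delta_\lambda(J(p,q)) = J(\delta_\lambda p, \delta_\lambda q)$, so a $\delta$-admissible cover of $E$ is carried to a $(\lambda\delta)$-admissible cover of $\delta_\lambda E$ with all weights multiplied by $\lambda^4$. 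This gives the scaling law $\hausdorff^4(\delta_\lambda E) = \lambda^4\,\hausdorff^4(E)$, which matches the Jacobian $\lambda^4$ of $\delta_\lambda$ against $\mathcal{L}^3$ and already singles out $d=4$ as the only exponent admitting a nonzero, locally finite, dilation-equivariant comparison with $\mathcal{L}^3$.

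The crux is then the non-degeneracy $0 < \hausdorff^4(\Omega) < +\infty$ for a bounded open $\Omega$, and this is precisely where the Diamond-Box theorem enters. For the lower bound, since $\prehausdorff{\delta}^4$ is a supremum over covers it suffices to exhibit one efficient cover: invoking the second part of the Diamond-Box theorem together with left-invariance and dilations, I would cover $\Omega$ by roughly $\mathcal{L}^3(\Omega)/r^4$ translated and dilated diamonds, each containing a box $\mathsf{Box}(C_2 r)$, with $\uptau \asymp r$ and $\dis$-diameter below $\delta$; their weights sum to $\sum \omega_4\,\uptau^4 \asymp \mathcal{L}^3(\Omega)$, uniformly as $\delta \to 0^+$. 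For the upper bound I would use the first part, $J(p_i,q_i) \subseteq L_{p_i}\mathsf{Box}(C_1\,\dis(p_i,q_i))$, together with the elementary estimate $\uptau(p_i,q_i) \le C\,\dis(p_i,q_i)$ (which follows from the causal constraint $\dot x \ge \abs{\dot y}$, whence $\uptau \le \Delta x \le C\,\dis$), to bound each weight $\omega_4\,\uptau(p_i,q_i)^4$ by a multiple of $\mathcal{L}^3(\mathsf{Box}(C_1\dis(p_i,q_i)))$, and thereby convert the weighted cover-sum into a controlled volume-sum.

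Once $\hausdorff^4 \asymp \mathcal{L}^3$ is established, $\hausdorff^4$ is a nonzero, left-invariant, locally finite Borel measure on $\heis$, hence a Haar measure, and uniqueness yields $\hausdorff^4 = c\,\mathcal{L}^3$ with $0<c<+\infty$; the same comparison shows $\mathcal{H}^d_{\uptau}(\Omega)$ is finite and positive exactly at $d=4$, so $\dim_{\uptau}(\heis)=4$. The main obstacle I expect is the upper bound: because $\uptau$ is genuinely not comparable to $\dis$ -- the excerpt recalls that no constant $c_2$ controls $\uptau$ from above by the Lorentzian box-content -- one cannot simply transplant the sub-Riemannian Ball-Box computation, and the $\uptau$-weighted sums over arbitrary diamond covers must be controlled against Lebesgue measure in spite of this anisotropy. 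Here the Diamond-Box theorem supplies the missing geometric input, and I would combine it with the outer-measure machinery of McCann--Sämann (monotonicity in $\delta$, Borel regularity, and the reduction to essentially non-overlapping diamond covers) to tame the overlaps and close the estimate.
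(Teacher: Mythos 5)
Your proposal is built on reading the pre-measure $\prehausdorff{\delta}^{4}$ as a \emph{supremum} over covers, and this breaks the argument at its core. (The ``sup'' in the paper's stated definition is a typo: with a supremum, the pre-measure of any nonempty set is $+\infty$, since a cover may be padded with arbitrarily many redundant diamonds of positive time-separation; the monotonicity in $\delta$ recalled from \cite{McCann2022}, and every inequality in the paper's own proofs, only make sense for the \emph{infimum} over covers.) With the correct definition, your two halves are inverted. Your ``lower bound'' --- exhibiting one efficient cover of $\Omega$ by roughly $\mathcal{L}^3(\Omega)/r^4$ diamonds with $\uptau \asymp r$ --- is in fact an upper bound for the infimum and, once the counting is done via a maximal $\delta$-separated set and disjoint half-radius balls, it is essentially the paper's finiteness argument (which rests on \cref{prop:controlling_balls_with_diamonds}). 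Your ``upper bound'', which estimates the weights of an \emph{arbitrary} cover, is the half that would have to become the lower bound, and here lies the genuine gap.

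For the lower bound one must show that \emph{every} admissible cover is expensive: $\sum_i \uptau(p_i,q_i)^4 \gtrsim \mathcal{L}^3(B)$. Since $\sum_i \mathcal{L}^3(J(p_i,q_i)) \geq \mathcal{L}^3(B)$ by $\sigma$-subadditivity (overlaps are harmless here, so no ``reduction to essentially non-overlapping diamond covers'' is needed --- nor does such a reduction exist), the required ingredient is an estimate of the form $\mathcal{L}^3(J(p,q)) \leq K\,\uptau(p,q)^4$, i.e.\ the weight of a diamond dominates its Lebesgue volume. This is exactly the paper's \cref{prop:volume_growth_estimate}, proved by an explicit computation of diamond volumes together with a limit analysis showing that nearly-null diamonds (tiny $\uptau$, possibly large $\dis$-diameter) have correspondingly tiny volume. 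Your proposed ingredients point in the opposite direction: $\uptau(p,q) \leq C\,\dis(p,q)$ and $J(p,q) \subseteq p \ast \mathsf{Box}\left(C_1\,\dis(p,q)\right)$ bound $\uptau^4$ \emph{above} by a box volume, and no such bound can rule out cheap covers by long, thin, almost-null diamonds --- precisely the phenomenon that makes the Lorentzian problem harder than the sub-Riemannian Ball-Box computation. The symmetry, scaling, and Haar-uniqueness part of your proposal is correct and coincides with \cref{prop:hausdorff_dimension_must_be_4}, but without a substitute for the volume-growth estimate the positivity of $\hausdorff^4$, and hence the theorem, is not established.
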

The proof roughly follows the one given for the standard Hausdorff measure in the sub-Riemannian Heisenberg group, see \cite[Theorem 2.1.34]{LeDonneBook}. The main idea is that diamonds can be approximated by sub-Riemannian balls, and vice versa, uniformly with respect to the diameter. In particular, we will prove a Lorentzian \say{ball-box theorem}, which ensures that diamonds grow at the same rate as sub-Riemannian boxes \say{along the time axis}.

We will also explicitly compute the Lebsegue volume of diamonds, which will then lead to an estimate on the volume growth in terms of the fourth power of the time separation function between the vertices of the diamond; this is also an indication of the fact that the correct dimension choice is 4.
\begin{proposition}\label{prop:hausdorff_dimension_must_be_4}
  On the space $(\heis, \dis, \leq, \ll, \uptau)$, the Lorentzian Hausdorff measure $\hausdorff^d$ is a left-invariant measure for any \(d \geq 0\). It is also homogeneous with respect to dilations, i.e. \( \hausdorff^d(\delta_\lambda(E)) = \lambda^d \hausdorff^d(E)\) for any Borel set \(E\) and \( \lambda >0\). If \(\hausdorff^d(B) \in (0, +\infty)\) for some \(d \geq 0\) and for some metric ball \(B\), then \(d=4\) and \(\hausdorff^4\) is a Haar measure.
\end{proposition}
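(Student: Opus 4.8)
The plan is to handle the three assertions in order, and then extract the value $d=4$ from the clash between the two homogeneity properties via the uniqueness of Haar measure. Throughout, the mechanism is the same: every symmetry we use is simultaneously a $\dis$-isometry (so it preserves the $\operatorname{diam}$-constraint, up to a controlled factor) and a causal-structure-preserving map acting predictably on $\uptau$ (so it rescales each summand $\omega_d\,\uptau(p_i,q_i)^d$), hence it induces a bijection between admissible covers in the premeasure $\prehausdorff{\delta}^d$.

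For left-invariance, I would use that each left translation $L_q$ is both a $\dis$-isometry and a future-preserving isometry of the sub-Lorentzian structure. By \cref{rmk:translations_preserve_geodesics} we have $\uptau(L_q p_1, L_q p_2) = \uptau(p_1,p_2)$, and by \cref{eq:leqLeftTranslations} the relation $\leq$ is preserved, so $L_q$ carries $J(p_1,p_2)$ onto $J(L_q p_1, L_q p_2)$ and preserves $\dis$-diameters. Thus $\{J(p_i,q_i)\}_i$ is an admissible $\delta$-cover of $E$ if and only if $\{J(L_q p_i, L_q q_i)\}_i$ is an admissible $\delta$-cover of $L_q E$, with identical summands; taking the supremum gives $\prehausdorff{\delta}^d(L_q E) = \prehausdorff{\delta}^d(E)$, and letting $\delta \to 0^+$ yields left-invariance of $\hausdorff^d$.

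For the dilation homogeneity, the key computation is that $(\delta_\lambda)_* X = \lambda X$ and $(\delta_\lambda)_* Y = \lambda Y$, so that any horizontal vector $v$ satisfies $g\bigl((\diff\delta_\lambda)v,(\diff\delta_\lambda)v\bigr) = \lambda^2\, g(v,v)$. Hence $\delta_\lambda$ multiplies Lorentzian length by $\lambda$, preserves the causal character and time-orientation (since $\lambda>0$), and satisfies $\uptau(\delta_\lambda p, \delta_\lambda q) = \lambda\,\uptau(p,q)$ together with $\delta_\lambda J(p,q) = J(\delta_\lambda p, \delta_\lambda q)$. As $\dis$ also scales by $\lambda$ under $\delta_\lambda$, an admissible $\delta$-cover of $E$ is carried to an admissible $(\lambda\delta)$-cover of $\delta_\lambda E$, each summand being multiplied by $\lambda^d$, giving $\prehausdorff{\lambda\delta}^d(\delta_\lambda E) = \lambda^d\,\prehausdorff{\delta}^d(E)$; passing to the limit $\delta\to 0^+$ establishes $\hausdorff^d(\delta_\lambda E) = \lambda^d\,\hausdorff^d(E)$. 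Now suppose $\hausdorff^d(B)\in(0,+\infty)$ for some metric ball $B$. Since left translations are $\dis$-isometries, $B$ is a translate of some $B(\zero,r)$, so by left-invariance we may assume $B = B(\zero,r)$. Because $B$ has nonempty interior and finitely many left translates cover any compact set, left-invariance forces $\hausdorff^d$ to be locally finite, hence a nonzero left-invariant Radon measure on $\heis$. By uniqueness of the left Haar measure together with the fact that $\mathcal{L}^3$ is Haar, we obtain $\hausdorff^d = c\,\mathcal{L}^3$ for some $c>0$. Applying the two scalings to $B(\zero,r)$ gives $\lambda^d\,\hausdorff^d(B) = \hausdorff^d(\delta_\lambda B) = c\,\mathcal{L}^3(\delta_\lambda B) = c\lambda^4\,\mathcal{L}^3(B) = \lambda^4\,\hausdorff^d(B)$, where $\lambda^4$ is the Jacobian determinant of $\delta_\lambda$; since $\hausdorff^d(B)\in(0,\infty)$, dividing yields $\lambda^d = \lambda^4$ for all $\lambda>0$, whence $d=4$ and $\hausdorff^4 = c\,\mathcal{L}^3$ is a Haar measure.

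I expect the main obstacle to be the careful bookkeeping of the dilation scaling for the premeasure $\prehausdorff{\delta}^d$: unlike the classical Hausdorff measure, the Lorentzian version is a \emph{supremum} over covers, so one must verify that $\{J(p_i,q_i)\}\mapsto\{J(\delta_\lambda p_i,\delta_\lambda q_i)\}$ is a bijection of admissible covers which simultaneously rescales the diameter threshold by $\lambda$ and every summand by $\lambda^d$, so that the two suprema match after dividing by $\lambda^d$. The only other delicate point is promoting finiteness of $\hausdorff^d$ on a single ball to the local finiteness needed to invoke Haar uniqueness, which the covering argument above resolves.
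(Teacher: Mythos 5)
Your proposal is correct and follows essentially the same route as the paper: left-invariance because translations preserve $\dis$ and $\uptau$ and carry diamonds to diamonds, dilation homogeneity from the correspondence between admissible $\delta$-covers of $E$ and $\lambda\delta$-covers of $\delta_\lambda(E)$ with every summand scaled by $\lambda^d$, and then $d=4$ by combining Haar uniqueness ($\hausdorff^d = c\,\mathcal{L}^3$) with the $4$-homogeneity of $\mathcal{L}^3$ under $\delta_\lambda$. The only cosmetic differences are that the paper proves homogeneity via a one-sided inequality applied twice (substituting $E=\delta_{1/\lambda}(F)$) instead of your bijection of covers, and gets local finiteness by translating and dilating one ball into any other rather than by your finite-covering argument; your worry about the supremum convention is immaterial precisely because your cover correspondence is a bijection (and indeed the paper's own proof, following the cited reference, treats the pre-measure as an infimum over covers).
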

\begin{proof}
  The fact that measure is invariant under left translations easily follows from the definition.
  Indeed, left translations preserve the sub-Riemannian distance and the time separation function, so they map diamonds to diamonds with the same diameter and time separation. The \(\delta\)-pre-measures are thus invariant under left translations, and so is \(\hausdorff^d\).

  To prove homogeneity with respect to dilations, consider a Borel set \(E\) and a positive number \(\lambda > 0\). If \(\delta > 0\) and \(\{J(p_i,q_i)\}_i\) is a \(\delta\)-cover of diamonds of \(E\), then for any \(i \) it holds \(\delta_\lambda(J(p_i,q_i)) = J(\delta_\lambda(p_i), \delta_\lambda(q_i))\) and \(\{\delta_\lambda(J(p_i,q_i))\}_i\) is a \(\delta\lambda\)-cover of \(\delta_\lambda(E)\) with
  \[
    \prehausdorff{\delta\lambda}^d(\delta_\lambda(E)) \leq \sum_i \omega_d \uptau(\delta_\lambda(p_i),\delta_\lambda(q_i))^d = \lambda^d \sum_i \omega_d \uptau(p_i,q_i)^d.
  \]
  Taking the infimum over all \(\delta\)-covers of diamonds of \(E\), we get
  \[
    \prehausdorff{\lambda\delta}^d(\delta_\lambda(E)) \leq \lambda^d \prehausdorff{\delta}^d(E).
  \]
  Moreover, taking the limit \(\delta \to 0^+\) yields \(\hausdorff^d(\delta_\lambda(E)) \leq \lambda^d \hausdorff^d(E)\). Since this holds for any \(E\), we can set \(E := \delta_{1/\lambda}(F)\) for some Borel set \(F\) to get the opposite inequality.

  We now assume the existence of a metric ball \(B\) with positive and finite \(\hausdorff^d\)-measure, for some \(d \geq 0\). By the previous reasoning, since any ball can be translated and dilated into any other, the measure \(\hausdorff^d\) is finite and positive on every ball. This also shows that \(\hausdorff^d\) is finite on the closure of balls, hence a Radon measure. Moreover, note that \(\hausdorff^d\) is non-trivial, since it takes positive values on balls, and its left-invariance guarantees that it is a Haar measure. Thus, \(\hausdorff^d = c \mathcal{L}^3\) for some constant \(c > 0\). We now consider any ball \(B(\mathbf{0},r)\) centered at the origin. Since \(\delta_r(B(\mathbf{0},1)) = B(\mathbf{0},r)\) we have that
  \[
    \hausdorff^d(B(\mathbf{0},r)) = r^d \hausdorff^d(B(\mathbf{0},1)) = cr^d \mathcal{L}^3(B(\mathbf{0},1)).
  \]
  Since we know that the Lebesgue measure is 4-homogeneous with respect to dilations of \(\heis\), we get
  \[
    cr^d \mathcal{L}^3(B(\mathbf{0},1)) = cr^{d-4} \mathcal{L}^3(B(\mathbf{0},r)) = r^{d-4}\hausdorff^d(B(\mathbf{0},r)),
  \]
  therefore,
  \[
    \hausdorff^d(B(\mathbf{0},r)) = r^{d-4}\hausdorff^d(B(\mathbf{0},r)).
  \]
  Since \(\hausdorff^d\) is finite and positive on any ball, this implies \(r^{d-4} = 1\) for every \(r > 0\), thus \(d=4\).
\end{proof}

\cref{prop:hausdorff_dimension_must_be_4} tells us that it is sufficient to prove that there exists a dimensional exponent \(d\) such that balls have positive and finite measure to conclude that the 4-dimensional Lorentzian Hausdorff measure equals, up to a constant, the Lebesgue measure.

We now prove the ``ball-box'' type theorem mentioned earlier, starting with an upper bound on diamonds in terms of sub-Riemannian boxes. For $r \geq 0$, we use the notation
\[
  \operatorname{Box}(r) \coloneqq [-r,r] \times [-r,r] \times [-r^2,r^2] \subseteq \heis.
\]
\begin{theorem}\label{thm:ball_box_lorentzian}
  Given \(p, q \in \heis\), we have
  \[
    J(p,q) \subset p \ast \operatorname{Box}(\dis(p,q)).
  \]
  In particular, there exists a constant \(C > 0\), independent of \(p\) and \(q\), such that
  \[
    J(p,q) \subset B(p, C \cdot \dis(p,q)),
  \]
  so that \(\operatorname{diam}(J(p,q)) \leq 2C \cdot \dis(p,q)\).
\end{theorem}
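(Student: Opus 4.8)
The plan is to reduce everything to the base point $p=\mathbf{0}$ by left invariance, and then read off the three box constraints directly from the defining inequalities of the causal future $J^+(\mathbf{0})$ in \cref{eq:causalfutureof0}, together with a single sharp distance estimate. Since left translations preserve $\leq$ (see \cref{eq:leqLeftTranslations}) and the sub-Riemannian distance, one checks directly from the group law that $J(p,q)=p\ast J(\mathbf{0},p^{-1}\ast q)$ and $\dis(p,q)=\dis(\mathbf{0},p^{-1}\ast q)$. Hence it suffices to prove $J(\mathbf{0},q)\subset\operatorname{Box}(\dis(\mathbf{0},q))$ for every $q=(a,b,c)\in\heis$; if the diamond is empty there is nothing to show, so I may assume $\mathbf{0}\leq q$, which by \cref{eq:causalfutureof0} forces $a\geq 0$ and $b^2+4\abs{c}\leq a^2$.

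Next I would fix an arbitrary $(x,y,z)\in J(\mathbf{0},q)$ and set $r:=\dis(\mathbf{0},q)$. From $(x,y,z)\in J^+(\mathbf{0})$ and \cref{eq:causalfutureof0} one gets $x\geq 0$ and $y^2+4\abs{z}\leq x^2$; in particular $\abs{y}\leq x$ and $4\abs{z}\leq x^2$. From the other constraint $(x,y,z)\leq q$, i.e. $(x,y,z)^{-1}\ast q=(-x,-y,-z)\ast(a,b,c)\in J^+(\mathbf{0})$, the sign condition on the first coordinate of this product reads $a-x\geq 0$, so that $0\leq x\leq a$. The remaining ingredient is the sharp bound $a\leq r$, which I would obtain from the standard fact that the sub-Riemannian length of a horizontal curve equals the Euclidean length of its projection to the $xy$-plane; consequently $\dis(\mathbf{0},q)\geq\sqrt{a^2+b^2}\geq a$. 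Combining these, $\abs{x}\leq a\leq r$, $\abs{y}\leq x\leq r$, and $\abs{z}\leq x^2/4\leq r^2$, so $(x,y,z)\in\operatorname{Box}(r)$. Translating back yields $J(p,q)\subset p\ast\operatorname{Box}(\dis(p,q))$.

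For the ``in particular'' assertion I would invoke the classical upper ball-box bound in \cref{eq:distanceboxHeisenberg}: for $(x,y,z)\in\operatorname{Box}(r)$ one has $x^2+y^2+\abs{z}\leq 3r^2$, hence $\dis(\mathbf{0},(x,y,z))\leq c_2\sqrt{3}\,r$, i.e. $\operatorname{Box}(r)\subset B(\mathbf{0},Cr)$ with $C:=\sqrt{3}\,c_2$. By left invariance $p\ast\operatorname{Box}(r)\subset B(p,Cr)$, and combining with the previous paragraph gives $J(p,q)\subset B(p,C\dis(p,q))$; the diameter bound $\operatorname{diam}(J(p,q))\leq 2Cr=2C\dis(p,q)$ is then immediate from $\operatorname{diam}(B(p,Cr))\leq 2Cr$.

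The proof is essentially a bookkeeping exercise once the reduction to the origin is in place, so I do not anticipate a serious obstacle. The one genuinely sharp point — and thus the crux of the argument — is the estimate $\dis(\mathbf{0},q)\geq a$ obtained by projecting horizontal curves to the plane: it is exactly what makes the box inclusion hold \emph{with no extra multiplicative constant}, in contrast to the failure of a uniform $\uptau$-based upper bound recalled in the introduction. The only computation requiring minor care is verifying the first coordinate of $(x,y,z)^{-1}\ast q$ to extract $x\leq a$, which is routine from the explicit group law.
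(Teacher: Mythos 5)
Your proposal is correct and follows essentially the same route as the paper's proof: reduction to the origin by left-invariance, extraction of the box constraints from the explicit description of $J^+(\mathbf{0})$ and the bound $x \leq a$ from $J^-(q)$, the sharp estimate $a \leq \dis(\mathbf{0},q)$ via projection of horizontal curves to the plane, and the classical sub-Riemannian Ball-Box theorem for the final inclusion into a metric ball. The only cosmetic differences are that you verify $x \leq a$ by an explicit group-law computation and make the constant $C = \sqrt{3}\,c_2$ explicit, whereas the paper states these steps more succinctly.
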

\begin{proof}
  Of course, \(J(p, q) = \emptyset\) if \(p \not\leq q\), so we assume \(p \leq q\). By left translating by \(-p\), it is enough to consider \(p = \mathbf{0}\).
  We let \( q = (a,b,c) \), and for any \( (x,y,z) \in J(\mathbf{0},q) \), we have
  \[
    \begin{split}
      (x,y,z) \in J^+(\mathbf{0}) \cap J^-(q) &\subset \{x \geq 0\} \cap \{-x^2+y^2+4\abs{z} \leq 0\} \cap \{x \leq a\} \\
      &=\{0\leq x \leq a\} \cap \{y^2 + 4\abs{z} \leq x^2\}\\
      & \subset \{-a \leq x \leq a\} \cap \{y^2 +4 \abs{z} \leq a^2\} \\
      &\subset \{-a\leq x \leq a\} \cap \{y^2 \leq a^2\} \cap \{4\abs{z} \leq a^2\}\\
      &\subset \{-a\leq x \leq a\} \cap \{-a\leq y \leq a\} \cap \{-a^2\leq z \leq a^2\}\\
      &= \operatorname{Box}(a).
    \end{split}
  \]
  Note that \(a \leq \dis(\mathbf{0}, q)\) since the length of any horizontal curve from $\mathbf{0}$ to \(q\) equals the Euclidean length of its projection on the \(xy\)-plane, which joins $\mathbf{0}$ to \((a,b)\) and is therefore at least \(\sqrt{a^2 + b^2}\), the length of the straight segment between them. We have therefore proven that
  \[
    J(\mathbf{0},q) \subset \operatorname{Box}(\dis(p,q)).
  \]
  Applying the standard sub-Riemannian ball-box theorem yields the existence of a constant \(C > 0\), independent of \(p\) and \(q\), such that
  \[
    \operatorname{Box}(\dis(p,q)) \subset B(\mathbf{0},C \cdot \dis(p,q)),
  \]
  which concludes the proof.
\end{proof}
\begin{remark} \label{rmk:tau_diameter_equals_tau_distance_vertices}
  In any Lorentzian pre-length space, every diamond is completely determined by its two vertices. Moreover, for any diamond \(J(p,q)\), it is always true that
  \[
    \operatorname{diam}_\uptau(J(p,q)) = \uptau(p,q),
  \]
  where \(\operatorname{diam}_\uptau(E) \coloneqq \sup_{p,q \in E} \uptau(p,q)\). Hence, we always have control of the \say{time-diameter} of diamonds in terms of the time separation between their vertices. The previous theorem allows us, in the special case of the Heisenberg group \(\heis\) endowed with its sub-Riemannian distance, to control the \say{metric diameter} of diamonds as well, in terms of the sub-Riemannian distance between their vertices.
\end{remark}

We continue our discussion by proving the \say{reverse} bound, namely that for any ball, we can find a suitable diamond that contains it and whose diameter is controlled by the radius of the ball.
\begin{proposition}\label{prop:controlling_balls_with_diamonds}
  Consider any metric ball \(B(p,r)\) in \(\heis\). There exists a constant \(D>0\), independent of \(p\) and \(r\), and a diamond \(J\) with the following properties:
  \begin{enumerate}[label=\normalfont(\roman*), topsep=4pt,itemsep=4pt,partopsep=4pt, parsep=4pt]
    \item \(B(p,r) \subset J\);
    \item \(\operatorname{diam}_\uptau(J) = 2Dr\);
    \item \(\operatorname{diam}_{\dis}(J) \leq 4CDr\),
  \end{enumerate}
  where \(C\) is the constant given by \cref{thm:ball_box_lorentzian}.
\end{proposition}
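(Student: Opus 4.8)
The plan is to exploit left-invariance to reduce to a ball centred at the origin, and then to enclose such a ball inside an explicitly chosen causal diamond whose two vertices lie on the time axis. Since left translations are isometries for $\dis$ and preserve $\uptau$ (\cref{rmk:translations_preserve_geodesics}), they map balls to balls, diamonds to diamonds, and preserve both the $\dis$- and $\uptau$-diameters; hence it suffices to treat $B(\mathbf{0}, r)$. Once a suitable diamond $J_0$ with $B(\mathbf{0}, r) \subset J_0$ is found, the translate $L_p(J_0) = J(p \ast p_0, p \ast q_0)$ works for $B(p, r)$ with identical diameters. By the sub-Riemannian ball-box theorem \eqref{eq:distanceboxHeisenberg} one has $B(\mathbf{0}, r) \subset \operatorname{Box}(c_2 r)$, so the task reduces to enclosing the box $\operatorname{Box}(R)$, with $R := c_2 r$, inside a single diamond.

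Next I would take the symmetric diamond $J_0 := J(p_0, q_0)$ with $p_0 := (-\ell, 0, 0)$ and $q_0 := (\ell, 0, 0)$, where $\ell$ is a multiple of $R$ to be fixed. A direct check using \eqref{eq:causalfutureof0} shows $-p_0 \ast q_0 = (2\ell, 0, 0) \in I^+(\mathbf{0})$, so $p_0 \ll q_0$ and $J_0$ is a genuine nonempty timelike diamond. Translating the membership conditions $(x,y,z) \in J^+(p_0)$ and $(x,y,z) \in J^-(q_0)$ back to the origin via \eqref{eq:causalfutureof0}, the two constraints become
\[
  -(\ell + x)^2 + y^2 + 4\abs{z + \tfrac{\ell y}{2}} \le 0 \quad\text{and}\quad -(\ell - x)^2 + y^2 + 4\abs{-z + \tfrac{\ell y}{2}} \le 0,
\]
together with $\abs{x} \le \ell$. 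For a point of $\operatorname{Box}(R)$ one has $\abs{x}, \abs{y} \le R$ and $\abs{z} \le R^2$, so each left-hand side is bounded above by $5R^2 + 2\ell R - (\ell - R)^2$; requiring this to be nonpositive amounts to the quadratic inequality $\ell^2 - 4\ell R - 4R^2 \ge 0$, which holds as soon as $\ell \ge 2(1 + \sqrt{2})R$. Fixing $\ell := 2(1 + \sqrt{2}) R = 2(1 + \sqrt{2}) c_2 r$ then guarantees $\operatorname{Box}(R) \subset J_0$, hence $B(\mathbf{0}, r) \subset J_0$, which is property (i).

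Finally I would compute the two diameters of $J_0$. By left-invariance $\uptau(p_0, q_0) = \uptau(\mathbf{0}, (2\ell, 0, 0))$ and $\dis(p_0, q_0) = \dis(\mathbf{0}, (2\ell, 0, 0))$; since $(2\ell, 0, 0)$ lies on the time axis with zero $z$-coordinate, both the length maximizer (the $c = 0$ case of \cref{thm:isoperimetric_solution}) and the sub-Riemannian minimizer are the straight segment, giving $\uptau(p_0, q_0) = \dis(p_0, q_0) = 2\ell$. By \cref{rmk:tau_diameter_equals_tau_distance_vertices}, $\operatorname{diam}_\uptau(J_0) = \uptau(p_0, q_0) = 2\ell$, so setting $D := 2(1 + \sqrt{2}) c_2$ yields $\ell = Dr$ and property (ii), namely $\operatorname{diam}_\uptau(J_0) = 2Dr$. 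For property (iii), \cref{thm:ball_box_lorentzian} gives $\operatorname{diam}_{\dis}(J_0) \le 2C \dis(p_0, q_0) = 4C\ell = 4CDr$. I expect the only delicate point to be the box-containment computation: the shear term $\tfrac{\ell y}{2}$ in the $z$-constraint scales like $R^2$, the same order as $\abs{z}$ and $y^2$, so one must verify that the quadratic gain $(\ell \pm x)^2 \sim \ell^2$ genuinely dominates. This is precisely what forces the choice of $\ell$ to be linear in $R$ and keeps $D$ a purely dimensional constant, independent of $p$ and $r$.
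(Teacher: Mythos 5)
Your proof is correct, and it takes a genuinely different route from the paper's. The paper's argument is soft: it fixes the unit diamond $J((-1,0,0),(1,0,0))$, notes that $\mathbf{0}$ lies in its interior so that \emph{some} ball $B(\mathbf{0},\rho)$ fits inside it, and then dilates by $r/\rho$, setting $D \coloneqq 1/\rho$; the constant $D$ is never computed, and the paper explicitly remarks afterwards that the proof is non-quantitative. You instead bypass the interior-point argument entirely: you reduce $B(\mathbf{0},r)$ to $\operatorname{Box}(c_2 r)$ via the sub-Riemannian Ball-Box theorem and then verify algebraically, using the left-translated form of \cref{eq:causalfutureof0}, that $\operatorname{Box}(R) \subset J((-\ell,0,0),(\ell,0,0))$ whenever $\ell^2 - 4\ell R - 4R^2 \geq 0$, i.e. $\ell \geq 2(1+\sqrt{2})R$. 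I checked your membership conditions ($-p_0 \ast (x,y,z)$ and $(x,y,z)^{-1} \ast q_0$ both computed correctly, including the shear term $\ell y/2$), the bound $5R^2 + 2\ell R - (\ell - R)^2$ (valid since $\ell > R$ makes $(\ell \pm x)^2 \geq (\ell - R)^2$), and the resulting quadratic; all are right, and the remaining steps (the $c=0$ case of \cref{thm:isoperimetric_solution} giving $\uptau(p_0,q_0) = \dis(p_0,q_0) = 2\ell$, \cref{rmk:tau_diameter_equals_tau_distance_vertices} for (ii), and \cref{thm:ball_box_lorentzian} for (iii)) mirror the paper. What your approach buys is precisely what the paper's remark asks for: an explicit admissible constant, $D = 2(1+\sqrt{2})\,c_2$, expressed in terms of the classical ball-box constant — though not necessarily the optimal one, since you enclose the ball in a box before enclosing it in a diamond. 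The trade-off is a longer computation where the paper gets away with two lines of topology plus dilation homogeneity.
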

\begin{proof}
  By left-translating everything by \(-p\), it is again not restrictive to assume that \(p = \mathbf{0}\). Given \(J \coloneqq J((-1,0,0),(1,0,0))\), we claim that
  \[
    \dis((-1,0,0),(1,0,0)) = \uptau((-1,0,0),(1,0,0)) = 2.
  \]
  It is enough to note that the segment joining the two points is length-minimizing for the sub-Riemannian distance and length-maximizing for \(\uptau\); in both cases, its length is 2. Applying \cref{thm:ball_box_lorentzian}, we get \(\operatorname{diam}_{\dis}(J) \leq 4C\).

  Since $\mathbf{0}$ lies in the interior of \(J\), there there exists \(\rho >0\) such that \(B(\mathbf{0},\rho) \subset J\). Notice that up to this point there is no dependency on \(p\) nor on \(r\). Dilating by a factor $r/\rho$, the ball \(B(\mathbf{0},\rho)\) is sent to \(B(\mathbf{0},r)\), while the diamond \(J\) is sent to \(J(x,y)\), with \(x = - y= (-r/\rho, 0 , 0)\). We set \(D \coloneqq 1/\rho\) and obtain that \(\dis(x,y) = \uptau(x,y) = 2Dr\) since both \(\dis\) and \(\uptau\) are homogeneous with respect to dilations. In particular, we know by \cref{rmk:tau_diameter_equals_tau_distance_vertices} that \(\operatorname{diam}_\uptau(J(x,y)) = 2Dr\). Since \(\operatorname{diam}_{\dis}(J) \leq 4C\), we obtain \(\operatorname{diam}_{\dis}(J(x,y)) \leq 4CDr\).
\end{proof}
\begin{remark}
  The previous proof is non-quantitative, as we only proved the existence of the constant \(D\) without knowing its value. One could try to find more explicitly the maximal radius \(\rho\) such that the sub-Riemannian ball \(B(\mathbf{0},\rho)\) is contained in \(J\).
\end{remark}

The last ingredient needed to prove that the 4-dimensional Lorentzian Hausdorff measure is a Haar measure is an estimate on the growth of the Lebesgue volume of diamonds in terms of the time separation function between their vertices. We compute explicitly the volume of diamonds of the form \(J(\mathbf{0},q)\) (this is not restrictive by left-translation invariance) and estimate it in the following proposition.
\begin{proposition}\label{prop:volume_growth_estimate}
  We have $\mathcal{L}^3(J(\mathbf{0},q)) = 0$ if \(q \notin I^+(\mathbf{0})\), and
  \begin{equation}
    \label{eq:volume_diamonds_general_case}
    \mathcal{L}^3(J(\mathbf{0},q)) = -\frac{(a^2 - b^2)^2}{8} \left(mM + m^2 \ln m + M^2 \ln M\right),
  \end{equation}
  if \(q= (a,b,c) \in I^+(\mathbf{0})\), where $m \coloneqq \tfrac{1}{2}\left(1 + \tfrac{4c}{a^2-b^2} \right)$ and $M \coloneqq \tfrac{1}{2}\left(1 - \tfrac{4c}{a^2-b^2} \right)$. Furthermore, there exists a constant \(K>0\), independent of \(q\), such that
  \begin{equation}
    \label{eq:volume_diamonds_growth}
    \mathcal{L}^3(J(\mathbf{0},q)) \leq K \uptau(\mathbf{0},q)^4.
  \end{equation}
\end{proposition}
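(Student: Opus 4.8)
The plan is to handle the degenerate case by a causality argument, to trivialize the main computation via the symmetries of $\heis$, and then to read off the growth bound as a scale-invariant one-parameter estimate. First, if $q \notin J^+(\mathbf{0})$, then $J(\mathbf{0},q) = J^+(\mathbf{0}) \cap J^-(q) = \emptyset$ by transitivity of $\leq$, so its measure is $0$. If instead $q \in J^+(\mathbf{0}) \setminus I^+(\mathbf{0})$, then $\uptau(\mathbf{0},q) = 0$, and for every $x \in J(\mathbf{0},q)$ the reverse triangle inequality forces $\uptau(\mathbf{0},x) = \uptau(x,q) = 0$; hence $x \in J^+(\mathbf{0}) \setminus I^+(\mathbf{0})$, which by \cref{eq:causalfutureof0,eq:chronologicalfutureof0} is the light-cone boundary $\{-x^2+y^2+4\abs{z}=0,\ x \geq 0\}$, a Lipschitz surface and therefore $\mathcal{L}^3$-null. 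This also makes \cref{eq:volume_diamonds_growth} trivially true there. For $q = (a,b,c) \in I^+(\mathbf{0})$ I would apply the automorphism of $\heis$ lifting the Minkowski isometry $R$ of \cref{eq:matrixLorentzRotation}: since $\det R = 1$ and $R$ preserves the symplectic area, its lift preserves both $\mathcal{L}^3$ and the causal structure and sends $q$ to $(T,0,c)$ with $T = \sqrt{a^2-b^2}$; the reflection $(x,y,z) \mapsto (x,-y,-z)$ is a further measure- and $\uptau$-preserving automorphism, so I may also assume $c \geq 0$.

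For the volume formula I would write the diamond as $\{(x,y,z) : \mathbf{0} \leq (x,y,z)\ \text{and}\ (x,y,z)^{-1} \ast q \in J^+(\mathbf{0})\}$; using \cref{eq:causalfutureof0} this becomes the two cone conditions $x^2 - y^2 \geq 4\abs{z}$, $x \geq 0$ and $(T-x)^2 - y^2 \geq 4\abs{c - z + \tfrac{Ty}{2}}$, $x \leq T$. Integrating by Fubini and slicing at fixed $z$, each cross-section is the planar region lying to the right of the hyperbola $x = \sqrt{y^2+4\abs{z}}$ and to the left of $x = T - \sqrt{y^2 + 4\abs{c-z+Ty/2}}$; integrating the $x$-width in $y$ produces primitives of the form $\int \sqrt{y^2 + \mathrm{const}}\,\diff y$, whose $\arcsinh$ terms are exactly the source of the logarithmic contributions. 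Performing the remaining $z$-integration and regrouping in terms of $m$ and $M$ yields the closed form \cref{eq:volume_diamonds_general_case}; the bookkeeping of the absolute values (the sign changes of $z$ and of $c - z + Ty/2$) is the most laborious but entirely elementary part.

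For the growth estimate, both $\mathcal{L}^3(J(\mathbf{0},q))$ and $\uptau(\mathbf{0},q)^4$ are invariant under the reductions above and homogeneous of degree $4$ under the dilations $\delta_\lambda$, so the ratio $\Phi := \mathcal{L}^3(J(\mathbf{0},q))/\uptau(\mathbf{0},q)^4$ depends only on the scale-invariant parameter $\rho := 4c/T^2 \in (-1,1)$ and is even in $\rho$; \cref{eq:volume_diamonds_growth} is then equivalent to $\sup_{\rho}\Phi < +\infty$. As $\Phi$ is continuous and positive on $(-1,1)$, only the null limit $\rho \to \pm 1$ must be controlled. The decisive observation is that, although $\uptau$ has no elementary expression in $c$, the rapidity $\sigma := \arcsinh\!\big(\tfrac{T/2}{\beta}\big)$ of the maximizing hyperbola (with $\beta = \sqrt{y_C^2 - T^2/4}$) gives, from the arc length computed for the exponential map, the exact formula $\uptau(\mathbf{0},q) = T\sigma/\sinh\sigma$ together with an explicit $\rho = \rho(\sigma)$. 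As $\sigma \to 0$ one recovers the timelike segment, with $\uptau \to T$ and $\rho \to 0$, so $\Phi$ has a finite limit; as $\sigma \to \infty$ one finds $\uptau \sim 2T\sigma\, e^{-\sigma}$ while the volume is smaller by a factor $\sim 1/\sigma$, so $\Phi \to 0$. Hence $\Phi$ extends continuously to $[-1,1]$ and is bounded, giving $K := \sup_\rho \Phi$.

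The genuine difficulty is this last comparison. A crude containment such as $J(\mathbf{0},q) \subseteq \operatorname{Box}(T)$ yields only $\mathcal{L}^3 \lesssim T^4$, which is blind to the faster decay of the volume near the null boundary, where $\uptau \ll T$; matching the two decay rates there is exactly what forces a precise understanding of $\uptau$ in the degenerate regime. The rapidity substitution $\sigma$, which linearizes the otherwise transcendental relation between $\uptau$ and $(T,c)$, is what renders that asymptotic comparison tractable and is, in my view, the crux of the argument.
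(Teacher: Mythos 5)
Your plan is correct, and its quantitative claims check out against the paper: your rapidity $\sigma$ is exactly $w/2$ in the parametrization of \cref{def:exponential_map}, the identity $\uptau(\mathbf{0},q)=T\sigma/\sinh\sigma$ is right, and your null-limit rates (with unit $\uptau$ one gets $M\sim 2\sigma\me^{-2\sigma}$, the bracket in \cref{eq:volume_diamonds_general_case} is dominated by $M^2\ln M$, and the ratio $\Phi$ decays like $1/(16\sigma)$) agree with what the paper establishes by a brute-force expansion in \cref{lemma:volume_limit_is_zero}. The overall architecture is the same as the paper's (symmetry reduction to $q=(T,0,c)$, Fubini, then scaling plus the exponential map for the growth bound), but three of your choices genuinely differ. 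First, for $q\in J^+(\mathbf{0})\setminus I^+(\mathbf{0})$ you trap $J(\mathbf{0},q)$ inside the null cone $\{-x^2+y^2+4\abs{z}=0,\ x\geq 0\}$ via the reverse triangle inequality, a Lipschitz surface hence $\mathcal{L}^3$-null; the paper instead uses \cref{prop:lightlike_case} and \cref{cor:maximum_area_and_timelike_line_case} to conclude the diamond is a single null curve. Your argument is more synthetic and would survive in settings where uniqueness of the connecting causal curve is unavailable. Second, for the volume you slice at fixed $z$ and integrate the $x$-width in $y$, producing $\arcsinh$ primitives; the paper integrates $z$ first (each fibre is an interval whose length is a min/max of affine expressions) and then passes to null coordinates $(s,t)$ on the Minkowski diamond (\cref{lemma:integral_computations}), so its integrand is piecewise bilinear and the logarithms arise only from the rational region boundaries $t=M/(1-s)$ and $t=1-m/s$ — markedly less bookkeeping than your route, though both are elementary. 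Third, for the growth bound your scale-invariant formulation $\Phi(\rho)$ with the substitution $\sigma$ is the cleaner presentation of the same argument, and you correctly identify that the crux is precisely the degenerate regime $\rho\to\pm 1$, which is what the paper's appendix lemma handles. The one caveat: both computational cores — the explicit integral behind \cref{eq:volume_diamonds_general_case} and the asymptotics of $\Phi$ — are sketched rather than executed, so as written this is a well-calibrated and correct plan rather than a finished proof; every intermediate claim you do assert, however, is verifiably true.
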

\begin{proof}
  We firstly tackle the case \(q \in J^+(\mathbf{0}) \setminus I^+(\mathbf{0})\). In this case, we have proven in \cref{prop:lightlike_case} and \cref{cor:maximum_area_and_timelike_line_case} that the only causal curve (up to parametrization) between $\mathbf{0}$ and \(q\) is the null geodesic joining them. Hence, \(J(\mathbf{0},q)\) is a one dimensional (piecewise) smooth compact curve, and thus \(\mathcal{L}^3(J(\mathbf{0},q)) = 0\).

  If \(q \in I^+(\mathbf{0})\), we reduce the problem to the case where \(b = 0\). To do so, it is sufficient to notice that the linear map defined by
  \[
    R \coloneqq
    \begin{pmatrix}
      \dfrac{a}{\sqrt{a^2-b^2}} & - \dfrac{b}{\sqrt{a^2-b^2}} & 0\\
      - \dfrac{b}{\sqrt{a^2-b^2}} & \dfrac{a}{\sqrt{a^2-b^2}} & 0 \\
      0 & 0 & 1
    \end{pmatrix}
  \]
  preserves the time separation function, sends \(q\) to \((\sqrt{a^2-b^2}, 0, c)\), and has determinant equal to 1, so that it also preserves the volume of the diamond.
  The map is well defined since \(q \in I^+(\mathbf{0})\) insures that \(a^2 - b^2 > 0\). Hence, from now onwards we will assume that \(q = (a,0,c)\).

  We want to find equations characterizing the diamond \(J(\mathbf{0},q)\). Since
  \[
    J^+(\mathbf{0}) = \{x \geq 0\} \cap \left\{ \abs{z} \leq \frac{x^2 - y^2}{4} \right\},
  \]
  we can find the equations for \(J^-(q)\) by noticing that
  \[
    p \in J^-(q) \iff q \in J^+(p) \iff -p \ast q \in J^+(0),
  \]
  which tells us that the equations for \(J^-(q)\) are
  \[
    J^-(q) = \{x \leq a\} \cap \left\{ \abs{z - c - \frac{a y}{2}} \leq \frac{(x - a)^2 - y^2}{4} \right\}.
  \]
  We wish to apply Fubini-Tonelli's Theorem to compute the volume of \(J(\mathbf{0},q)\). Since any causal curve from \(\mathbf{0}\) to \(q\) in \(\heis\) projects to a causal curve from $\mathbf{0}$ to \((a,0)\) in the Minkowski plane, the projection of \(J(\mathbf{0},q)\) onto the \(xy\)-plane lies in the Minkowski diamond
  \[
    Q \coloneqq \{0 \leq x \leq a\} \cap \{ \abs{y} \leq x \leq a - \abs{y} \}.
  \]
  Hence a point \((x,y,z)\) belongs to \(J(\mathbf{0},q)\) if and only if
  \[
    (x,y) \in Q \text{, } \ \abs{z - c - \frac{a y}{2}} \leq \frac{(x - a)^2 - y^2}{4} \text{, and } \ \abs{z} \leq \frac{x^2 - y^2}{4}.
  \]
  The last two conditions are equivalent to
  \[
    z \in \left( \max \left( -\tfrac{x^2 - y^2}{4},\ c + \tfrac{a y}{2} - \tfrac{(x - a)^2 - y^2}{4} \right),\ \min \left( \tfrac{x^2 - y^2}{4},\ c + \tfrac{a y}{2} + \tfrac{(x - a)^2 - y^2}{4} \right) \right),
  \]
  where the interval is empty if the lower bound exceeds the upper bound. We can now write the integral as
  \begin{equation}\label{eq:integral_with_max_min}
    \begin{split}
      \mathcal{L}^3(J(\mathbf{0},q)) ={}& \int_Q \mathcal{L}^1\{ z \in \R \ \colon \ (x,y,z) \in J(\mathbf{0},q) \} \, dz \, d\mathcal{L}^2(x,y) \\
      ={}& \int_Q \max \left[ 0,\ \min \left( \tfrac{x^2 - y^2}{4},\ c + \tfrac{a y}{2} + \tfrac{(x - a)^2 - y^2}{4} \right) \right. \\
      &\qquad \qquad \left. - \max \left( -\tfrac{x^2 - y^2}{4},\ c + \tfrac{a y}{2} - \tfrac{(x - a)^2 - y^2}{4} \right) \right] \, d\mathcal{L}^2(x,y),
    \end{split}
  \end{equation}
  where the outer \(\max\) accounts for the case when the interval is empty. To explicitly solve this integral, one must analyze the integrand case by case. We give the detailed computation in the appendix, see \cref{lemma:integral_computations}. The final result is
  \begin{equation}\label{eq:volume_diamonds_simplified_case}
    \mathcal{L}^3(J(\mathbf{0},q)) = -\frac{a^4}{8} \left( mM + M^2 \ln(M) + m^2 \ln(m) \right),
  \end{equation}
  where
  \[
    m \coloneqq \frac{1}{2} \left( 1 + \frac{4c}{a^2} \right), \quad M \coloneqq \frac{1}{2} \left( 1 - \frac{4c}{a^2} \right).
  \]
  This formula holds for points of the form \((a,0,c)\). To get \cref{eq:volume_diamonds_general_case}, it is enough to recall that any point \((a,b,c)\) can be reduced to \((\sqrt{a^2 - b^2}, 0, c)\), and then apply \cref{eq:volume_diamonds_simplified_case}.

  It remains to prove the growth estimate \cref{eq:volume_diamonds_growth}. For any point \(q \in I^+(\mathbf{0})\) (the case \(q \in J^+(\mathbf{0}) \setminus I^+(\mathbf{0})\) is trivial since the volume vanishes), set \(\lambda \coloneqq 1 / \uptau(\mathbf{0},q)\) and consider the dilation map \(\delta_\lambda\). This map has Jacobian determinant \(1 / \uptau(\mathbf{0},q)^4\), so we get
  \[
    \mathcal{L}^3(\delta_\lambda(J(\mathbf{0},q))) = \mathcal{L}^3(J(\mathbf{0},\delta_\lambda(q))) = \frac{1}{\uptau(\mathbf{0},q)^4} \mathcal{L}^3(J(\mathbf{0},q)).
  \]
  Consider the point \(\tilde{q} = \delta_\lambda(q)\), we have \(\uptau(0,\tilde{q}) = 1\) and
  \[
    \mathcal{L}^3(J(\mathbf{0},q)) = \uptau(\mathbf{0},q)^4 \mathcal{L}^3(J(\mathbf{0},\tilde{q})),
  \]
  so to prove the result it is enough to bound the volume of diamonds with unit time-diameter. We can use \cref{eq:volume_diamonds_simplified_case}, since the reduction procedure does not change the time-diameter of diamonds. Hence we may assume \(\tilde{q} = (a,0,c)\). Because \(\tilde{q}\) is at \(\uptau\)-distance 1 from the origin, it lies in the image of the exponential map described in \cref{def:exponential_map} for \(t = 1\), with parameters \(u,v,w\) satisfying \(u^2 - v^2 = 1\). Moreover, the \(y\)-coordinate of \(\tilde{q}\) vanishes. This fixes all but one of the parameters, and from the expression of the exponential map it is straightforward to see that \(\tilde{q}\) must be of the form
  \[
    \tilde{q} =
    \begin{pmatrix}
      \dfrac{\strut 2\sinh \left( \frac{w}{2}\right)}{\strut w}\\
      \strut 0\\
      \dfrac{\strut\sinh(w)-w}{2w^2}
    \end{pmatrix}, \qquad \text{ for some \(w \in \R\).}
  \]
  By plugging this expression into \cref{eq:volume_diamonds_simplified_case}, we get a continuous function of \(w\). So it is enough to prove that the limit of the volume as \(w \to \pm\infty\) is finite to obtain a global bound. Also, replacing \(w\) by \(-w\) does not change \cref{eq:volume_diamonds_simplified_case}, so we can restrict to \(w \to +\infty\). We observe that \(a^4 \sim e^{2w}/w^4\) as \(w \to +\infty\), so the explicit limit we want to compute is
  \[
    \begin{split}
      \lim_{w \to +\infty} \frac{e^{2w}}{w^4} &\left[ \frac{-e^{-w} + e^{-2w} + w e^{-w}}{(e^{-w} - 1)^2} - \left( \frac{-e^{-w} + e^{-2w} + w e^{-w}}{(e^{-w} - 1)^2} \right)^2 \right. \\
        &+ \left( \frac{-e^{-w} + e^{-2w} + w e^{-w}}{(e^{-w} - 1)^2} \right)^2 \ln \left( \frac{-e^{-w} + e^{-2w} + w e^{-w}}{(e^{-w} - 1)^2} \right) \\
      &\left. + \left( 1 + \frac{e^{-w} - e^{-2w} - w e^{-w}}{(e^{-w} - 1)^2} \right)^2 \ln \left( 1 + \frac{e^{-w} - e^{-2w} - w e^{-w}}{(e^{-w} - 1)^2} \right) \right],
    \end{split}
  \]
  which we show equals 0 in \cref{lemma:volume_limit_is_zero} of the appendix. The volume of \(J(\mathbf{0},\tilde{q})\) therefore goes to zero as \(w \to \pm\infty\). Since it is a positive continuous function, it must then admit a finite upper bound \(K > 0\), which proves the last claim.
\end{proof}
\begin{remark}
  We believe that the function \(\mathcal{L}^3(J(\mathbf{0},\tilde{q}))\) attains its maximum at \(\tilde{q} = (1,0,0)\), i.e. when \(w = 0\). This would imply that the optimal constant in \cref{eq:volume_diamonds_growth} is
  \[
    K = \mathcal{L}^3(J(\mathbf{0},(1,0,0))) = \frac{2\ln 2 - 1}{32}.
  \]
  Since a sharp bound is not needed here, we do not pursue a full proof of this claim.
\end{remark}
We are now set to prove \cref{thm:lorentzian_hausdorff} (compare with \cite[Theorem 2.1.34]{LeDonneBook}).
\begin{proof}[Proof of \cref{thm:lorentzian_hausdorff}]
  By \cref{prop:hausdorff_dimension_must_be_4}, it is enough to show that some ball \(B\) has positive and finite measure. We take any ball \(B\) and prove the two bounds separately.

  Fix any \(\delta\)-cover of \(B\) by diamonds \(\{J(p_i, q_i)\}_{i \in \N}\), then
  \[
    \sum_i \uptau(p_i, q_i)^4 = \frac{1}{K} \sum_i K \uptau(p_i, q_i)^4,
  \]
  where \(K > 0\) is the constant from the growth estimate in \cref{prop:volume_growth_estimate}.
  Therefore, we have
  \[
    \sum_i \uptau(p_i,q_i)^4 = \frac{1}{K} \sum_i K \uptau(p_i,q_i)^4 \geq \frac{1}{K}\sum_i \mathcal{L}^3(J(p_i,q_i)).
  \]
  Since \(\mathcal{L}^3\) is \(\sigma\)-subadditive and since the union of all of the diamonds covers \(B\), we get
  \[
    \sum_i \uptau(p_i,q_i)^4 \geq \frac{1}{K}\mathcal{L}^3\left(\bigcup_i J(p_i,q_i)\right) \geq \frac{1}{K} \mathcal{L}^3(B) > 0.
  \]
  The rightmost term does not depend on the cover or on \(\delta\), so taking the infimum over all \(\delta\)-covers by diamonds and letting \(\delta \to 0^+\) gives
  \[
    \hausdorff^4(B) \geq \frac{1}{K} \mathcal{L}^3(B) > 0,
  \]
  which shows that \(B\) has positive measure.

  To prove the other bound, let \(B = B(p, r)\), consider \(B(p, r/2)\) and \(\delta \in (0, r/2)\). Following \cite[proof of Theorem 2.1.34]{LeDonneBook}, we claim there exists a maximal finite set of points \(\{p_1, \dots, p_l\}\) in \(B(p, r/2)\) such that \(\dis(p_i, p_j) > \delta\). Indeed, if \(p_1, \dots, p_l\) are such points, then the balls \(B(p_i, \delta/2)\) are disjoint and contained in \(B\) by the triangle inequality. Since \(\mathcal{L}^3(B(q, \rho)) = M \rho^4\) for any \(q \in \heis\) and \(\rho > 0\), we have
  \[
    l\left(\frac{\delta}{2}\right)^4 = \sum_{i=1}^l \left(\frac{\delta}{2}\right)^4 = \frac{1}{M} \sum_{i=1}^l \mathcal{L}^3\left(B\left(p_i, \frac{\delta}{2}\right)\right) = \frac{1}{M} \mathcal{L}^3\left( \bigcup_{i=1}^l B\left(p_i, \frac{\delta}{2}\right) \right) \leq \frac{1}{M} \mathcal{L}^3(B),
  \]
  which implies
  \[
    l \leq \frac{2^4}{M \delta^4} \mathcal{L}^3(B) < \infty,
  \]
  so for fixed \(\delta\), a maximal \(l\) exists. If we now take a maximal set of such points \(p_1, \dots, p_m\), then \(\{B(p_i, \delta)\}_{i=1}^m\) covers \(B(p, r/2)\). Applying \cref{prop:controlling_balls_with_diamonds}, we get a collection of diamonds \(\{J(x_i, y_i)\}_{i=1}^m\) such that for each \(i \in \{1, \dots, m\}\),
  \[
    B(p_i, \delta) \subset J(x_i, y_i), \quad \uptau(x_i, y_i) = 2D\delta, \quad \text{and} \quad \operatorname{diam}_{\dis}(J(x_i, y_i)) \leq 4CD\delta.
  \]
  This family of diamonds consists of a \(4CD\delta\)-cover of \(B(p, r/2)\), so we have
  \[
    \begin{split}
      \prehausdorff{4CD\delta}^4\left(B\left(p, \frac{r}{2}\right)\right) &\leq \sum_{i=1}^m \uptau(x_i, y_i)^4 = 2^4 D^4 \sum_{i=1}^m \delta^4 = 4^4 D^4 \sum_{i=1}^m \left(\frac{\delta}{2}\right)^4 \\
      &= \frac{(4D)^4}{M} \sum_{i=1}^m \mathcal{L}^3\left(B\left(p_i, \frac{\delta}{2}\right)\right) \\
      &= \frac{(4D)^4}{M} \mathcal{L}^3\left( \bigcup_{i=1}^m B\left(p_i, \frac{\delta}{2}\right) \right) \leq \frac{(4D)^4}{M} \mathcal{L}^3(B).
    \end{split}
  \]
  The last estimate does not depend on \(\delta\) or on the specific cover, so taking the infimum over all \(4CD\delta\)-covers and letting \(\delta \to 0^+\), we obtain
  \[
    \hausdorff^4 \left( B \left( p, \frac{r}{2} \right) \right) \leq \frac{(4D)^4}{M} \mathcal{L}^3(B) < +\infty.
  \]
  By performing a final dilation of factor 2, we get
  \[
    \hausdorff^4(B) \leq \frac{(8D)^4}{M} \mathcal{L}^3(B) < +\infty.
  \]

  This shows that \(B\) has finite and positive measure, so the whole space satisfies \(\dim_{\uptau}(\heis) = 4\). Moreover, by \cref{prop:hausdorff_dimension_must_be_4}, we know that \(\hausdorff^4\) is a Haar measure, and thus must agree with \(\mathcal{L}^3\) and \(\mathcal{H}^4\) up to non-zero constant factors.
\end{proof}

\section{Synthetic timelike curvature bounds in the Heisenberg group}\label{sect:heis_tcd_tmcp}

We are now prepared to study synthetic curvature-dimension conditions in the sub-Lorentzian Heisenberg group. As explained in the introduction, we will show that the timelike curvature-dimension conditions and the timelike measure contraction properties (along with all their variants) fail in the Lorentzian geodesic space $(\heis, \dis, \m, \leq, \ll, \uptau)$. Here, we make the following extra assumptions on the Radon measure $\m$ attached to our structure. For the rest of this section, we assume that
\begin{enumerate}[label=\normalfont(\Alph*), ref=\normalfont(\Alph*), topsep=4pt,itemsep=4pt,partopsep=4pt, parsep=4pt]
  \item\label{item:assumptionA} there exists a point $q \in \heis$ and a neighbourhood $U \ni q$ such that $\m\mres U \in \mathscr{P}^{\mathrm{ac}}(\heis, \mathcal{L}^3)$, $f \coloneqq \diff \m/\mathrm{d} \mathcal{L}^3$ is continuous at $q$ with $f(q)>0$.
\end{enumerate}
It is possible that \labelcref{item:assumptionA} could be further weakened, but it is sufficient for proving the next result. Note that $p \in (0,1)$ is fixed for the remainder of this section; accordingly, we omit the subscript in the notation $\mathsf{TCD}_p$, $\mathsf{TMCP}_p$, and so on.

\begin{proposition}
  \label{prop:KNto0N}
  If $(\heis, \dis, \m, \leq, \ll, \uptau)$, with $\mathfrak{m}$ veryfying \labelcref{item:assumptionA}, satisfies the $\mathsf{TMCP}(K, N)$ (resp. $\mathsf{TCD}(K, N)$) condition for some $K \in \mathbb{R}$ and $N \geq 1$ (resp. $N \in [1, +\infty]$), then $(\heis, \dis, \mathcal{L}^3, \leq, \ll, \uptau)$ satisfies the $\mathsf{TMCP}(0, N)$ (resp. $\mathsf{TCD}(0, N)$).
\end{proposition}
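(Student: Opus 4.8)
The plan is to exploit the homogeneity of $\heis$ under dilations to perform a blow-up at the point $q$, simultaneously replacing the reference measure $\m$ by a positive multiple of $\mathcal{L}^3$ and driving the curvature parameter to $0$. Two structural facts drive everything. First, the left-translations $L_p$ and the dilations $\delta_\lambda$ are isomorphisms of the \emph{unmeasured} space $(\heis,\dis,\leq,\ll,\uptau)$: the $L_p$ preserve $\dis$, $\uptau$ and $\mathcal{L}^3$, while $\delta_\lambda$ preserves the causal relations and rescales $\dis(\delta_\lambda a,\delta_\lambda b)=\lambda\,\dis(a,b)$ and $\uptau(\delta_\lambda a,\delta_\lambda b)=\lambda\,\uptau(a,b)$. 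Second, I will combine the abstract Scaling property recalled above with the invariance of $\mathsf{TMCP}$ and $\mathsf{TCD}$ under multiplication of the reference measure by a positive constant (both sides of each defining inequality scale by the same power of that constant, so the conditions are unchanged).

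First I would reduce to $q=\zero$. Since $L_{q^{-1}}$ is an isomorphism preserving $\mathcal{L}^3$, the measured space with reference $(L_{q^{-1}})_\sharp\m$ still satisfies the hypothesis, and near $\zero$ this measure equals $\tilde f\,\mathcal{L}^3$ with $\tilde f(p)=f(q\ast p)$ continuous at $\zero$ and $\tilde f(\zero)=f(q)>0$. So assume $\m=f\mathcal{L}^3$ near $\zero$, $f$ continuous at $\zero$, $f(\zero)>0$. For $\lambda>0$ the dilation $\delta_\lambda$ is an isometry of $(\heis,\dis,\uptau)$ onto $(\heis,\lambda^{-1}\dis,\lambda^{-1}\uptau)$, so $(\heis,\lambda^{-1}\dis,(\delta_\lambda)_\sharp\m,\leq,\ll,\lambda^{-1}\uptau)$ still satisfies $\mathsf{TMCP}(K,N)$ (resp.\ $\mathsf{TCD}(K,N)$); applying the Scaling property with $a=c=\lambda$ and $b=\lambda^4$ then yields that
\[
  (\heis,\dis,\m_\lambda,\leq,\ll,\uptau),\qquad \m_\lambda:=\lambda^4(\delta_\lambda)_\sharp\m,
\]
satisfies $\mathsf{TMCP}(K/\lambda^2,N)$ (resp.\ $\mathsf{TCD}(K/\lambda^2,N)$). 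A change of variables, using that $\delta_{1/\lambda}$ has Jacobian $\lambda^{-4}$, gives $\m_\lambda=f(\delta_{1/\lambda}\,\cdot)\,\mathcal{L}^3$ on any fixed compact set once $\lambda$ is large, and by continuity of $f$ at $\zero$ the density $f(\delta_{1/\lambda}\,\cdot)$ converges to the constant $f(\zero)$ uniformly on compacta as $\lambda\to\infty$, so $\m_\lambda\to f(\zero)\,\mathcal{L}^3$ locally while $K/\lambda^2\to 0$.

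The main obstacle is the passage to the limit $\lambda\to\infty$, i.e.\ the stability of the condition under this strong local convergence of reference measures together with $K/\lambda^2\to0$; the simplification here is that the underlying Lorentzian length space stays fixed. I would fix a test configuration: for $\mathsf{TMCP}$, a compactly supported $\mu_0=\rho_0\mathcal{L}^3$ and $x_1$ with $\mu_0(I^-(x_1))=1$ (for $\mathsf{TCD}$, a timelike $p$-dualizable pair). The crucial point is that $\mathrm{OptTGeo}_p$ depends only on the marginals and on $\uptau$, \emph{not} on the reference measure, and it is compact by $\mathcal{K}$-global hyperbolicity since all competitors lie in the fixed compact diamond $J(\operatorname{supp}\mu_0,x_1)$; on this diamond $f(\delta_{1/\lambda}\,\cdot)$ is, for large $\lambda$, bounded and bounded away from $0$, so $\mu_0$ is also absolutely continuous with respect to $\m_\lambda$ and the $\mathsf{TMCP}(K/\lambda^2,N)$ inequality applies, producing $\nu_\lambda\in\mathrm{OptTGeo}_p(\mu_0,\delta_{x_1})$. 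Extracting a narrowly convergent subsequence $\nu_\lambda\to\nu$ in this compact set, I pass to the limit on both sides: the coefficients satisfy $\tau^{(t)}_{K/\lambda^2,N'}(\theta)\to\tau^{(t)}_{0,N'}(\theta)=t$ by continuity in $K$; the right-hand side converges since $f(\delta_{1/\lambda}\,\cdot)\to f(\zero)$ uniformly on $\operatorname{supp}\mu_0$; and the left-hand side is controlled by upper semicontinuity of the (absolutely continuous) Rényi functional $\mu\mapsto\int\rho^{1-1/N'}\diff\mathcal{L}^3$ under narrow convergence (for $N=\infty$, lower semicontinuity of $\Ent(\,\cdot\mid\mathcal{L}^3)$). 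Rewriting $\m_\lambda$-densities in terms of $\mathcal{L}^3$-densities produces an overall factor $f(\zero)^{1/N'}$ on both sides (resp.\ an additive $-\log f(\zero)$ that cancels across the affine right-hand side of the $\mathsf{TCD}(0,\infty)$ inequality).

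The surviving inequality is exactly $\mathsf{TMCP}(0,N)$ (resp.\ $\mathsf{TCD}(0,N)$) for the reference measure $f(\zero)\,\mathcal{L}^3$, and since $f(\zero)>0$ is constant and the conditions are invariant under scaling the reference measure, this is equivalent to the same condition for $\mathcal{L}^3$, as claimed. I expect the delicate point to be precisely this final limit paragraph: one must check that the optimizers $\nu_\lambda$ can be extracted to a limit in the correct class and that the semicontinuity inequalities point in the right direction, keeping in mind that the $\mathsf{TMCP}$ inequality involves only the absolutely continuous part of $(e_t)_\sharp\nu_\lambda$. The remaining ingredients—homogeneity of $\dis$, $\uptau$ and $\mathcal{L}^3$, the change of variables, and continuity of $\tau_{K,N'}^{(t)}$ in $K$—are routine.
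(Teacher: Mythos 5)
Your blow-up strategy coincides with the paper's: reduce to $q=\zero$ by left translation, combine the scaling property with the fact that $\delta_\lambda$ is an isomorphism of the unmeasured structure to conclude that $(\heis,\dis,\lambda^4(\delta_\lambda)_\sharp\m,\leq,\ll,\uptau)$ satisfies $\mathsf{TMCP}(K/\lambda^2,N)$ (resp.\ $\mathsf{TCD}(K/\lambda^2,N)$), and identify the blown-up reference measures as $f(\delta_{1/\lambda}\,\cdot)\,\mathcal{L}^3$ converging to $f(\zero)\,\mathcal{L}^3$. Where the two proofs part ways is the final limit: the paper does \emph{not} re-prove stability by hand, but embeds all the rescaled spaces and the limit space into the fixed background $(\heis,\dis,\mathcal{L}^3,\ll,\leq,\uptau)$ via identity maps, verifies convergence of the measures in duality with $C_{\mathrm{c}}(\heis)$, and then invokes the stability theorem \cite[Theorem 4.11]{Braun2023a} (see also \cite[Theorem 3.13]{cavallettimondino2024}).

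Your replacement of that citation by a direct compactness-and-semicontinuity argument is where the gap lies. For $\mathsf{TCD}$ you must treat \emph{every} timelike $p$-dualizable pair $(\mu_0,\mu_1)$, and after extracting a narrow limit $\nu$ of the plans $\nu_\lambda$, the pair-marginal $(e_0,e_1)_\sharp\nu$ is only guaranteed to charge the closed set $\heis^2_\leq$: since $\heis^2_\ll$ is open, the portmanteau theorem allows mass to leak onto null-related pairs in the limit, so $\nu$ need not belong to $\mathrm{OptTGeo}_p(\mu_0,\mu_1)$, nor need it be concentrated on timelike geodesics. (Your $\mathsf{TMCP}$ configuration avoids this, because $\supp\mu_0\times\{x_1\}$ is a compact subset of $\heis^2_\ll$.) In addition, the narrow compactness of causal dynamical plans supported in a fixed compact diamond (a limit-curve argument requiring uniform $\dis$-Lipschitz parametrizations via non-total imprisonment), and the joint semicontinuity of the entropy functionals when the \emph{reference measure} varies along with the plan — with only the absolutely continuous part entering the $\mathsf{TMCP}$ inequality — are exactly the nontrivial ingredients packaged in the stability theorem. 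Your semicontinuity directions are stated correctly, but these steps are asserted rather than proved; either cite the stability theorem, as the paper does, or the chronology-preservation issue in particular must be resolved before the argument closes.
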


\begin{proof}
  We only show the result for $\mathsf{TMCP}$ since the case for $\mathsf{TCD}$ is entirely analogous.
  We also observe that it is not restrictive to assume the point $q \in \heis$, around which $\mathfrak{m}$ is absolutely continuous with respect to the Lebesgue measure, to be the origin. Indeed, we can simply consider the translation map given by $L_{-q}$: this map is an isomorphism of Lorentzian metric measure spaces between the spaces $(\heis, \dis, \m, \leq, \ll, \uptau)$ and $(\heis, \dis, (L_{-q})_\sharp\mathfrak{m}, \ll,\leq, \uptau)$, so the $\mathsf{TMCP}(K, N)$ holds in the latter space as well.
  The pushforward measure $\mathfrak{m}' \coloneqq (L_{-q})_\sharp \mathfrak{m}$ satisfies \labelcref{item:assumptionA}, with $q' = \mathbf{0}$, \(U' = L_{-q}(U)\) and $f' = f \circ L_{-q}^{-1} \cdot \det(JL_{-q}^{-1}) = f \circ L_q$. Hence we assume that $q = \mathbf{0}$.

  By scaling invariance of the measure contraction property recalled in \cref{section:TCDTMCP}, we see that the space defined by
  \[
    \tilde{X}_n \coloneqq (\heis, \dis, n^4\mathfrak{m}, \ll,\leq, n\uptau,)
  \]
  satisfies the $\mathsf{TMCP}(K/n^2, N)$ condition. Since the Heisenberg's group dilation
  \[
    \delta_n : \heis \to \heis : (x, y, z) \mapsto (n x, n y, n^2 z)
  \]
  is an isomorphism of measured Lorentzian pre-length spaces between the structures $\tilde X_n$ and $X_n \coloneqq (\heis, \dis, (\delta_n)_\sharp(n^4\mathfrak{m}), \ll, \leq,\uptau)$, we deduce that $X_n$ satisfies $\mathsf{TMCP}(K/n^2, N)$ too.
  We aim to show that
  \[
    X_n \to X_{\infty} \coloneqq (\heis, \dis, f(\mathbf{0}) \mathcal{L}^3, \ll,\leq,\uptau), \qquad \text{ as } n \to +\infty,
  \]
  in the sense of \cite[Definition 3.24]{Braun2023a}. By the stability theorem \cite[Theorem 4.11]{Braun2023a} (see also \cite[Theorem 3.13]{cavallettimondino2024}), we would be able to conclude that $X_\infty$ satisfies $\mathsf{TMCP}(0, N)$, showing the statement since $f(0) > 0$ by hypothesis.

  The identity maps $\iota_n : X_n \to (\heis, \dis, \mathcal{L}^3, \ll, \leq, \uptau)$, and $\iota_\infty : X_\infty \to (\heis, \dis, \mathcal{L}^3, \ll, \leq, \uptau)$ are obviously Lorentzian isometric embeddings from $X_{n}$ and $X_\infty$ onto the causally closed, $\mathcal{K}$-globally hyperbolic (see \cite[Proposition 18]{borza2025}),
  measured Lorentzian geodesic space, according to \cite[Definition 3.23]{Braun2023a}. It remains to show that that
  \[
    (\iota_n)_\sharp (\delta_n)_\sharp(n^4\mathfrak{m}) \to f(\mathbf{0}) \mathcal{L}^3, \qquad \text{ as } n \to +\infty,
  \]
  in duality with $C_{\mathrm{c}}(\heis)$. Consider $\varphi \in C_{\mathrm{c}}(\heis)$ and $n$ sufficiently large so that $\supp(\varphi) \subset \delta_n(U)$, where $U$ is a neighbourhood of $\mathbf{0}$ such that $\mathfrak{m} \mres U$ is absolutely continuous with respect to $\mathcal{L}^3$; we can also assume that \(U\) is such that \(f\) is bounded on it, since \(f\) is continuous at $\mathbf{0}$. Notice that this condition, along with the fact that $\delta_n$ is a homeomorphism, implies that
  \[
    \supp(\varphi \circ \delta_n) = \delta_n^{-1} (\supp(\varphi)) \subset U.
  \]
  Therefore, setting $f(y) = 0$ if $y \notin U$, we have that
  \begin{align*}
    \int_{\heis} \varphi \diff (\iota_n)_\sharp (\delta_n)_\sharp(n^4\mathfrak{m}) & = \int_{\supp(\varphi)} \varphi(x) \diff (\delta_n)_\sharp(n^4\mathfrak{m})(x)\\
    & = \int_{\delta_n(U)} \varphi(x) \diff (\delta_n)_\sharp(n^4\mathfrak{m})(x) && \text{($\supp(\varphi) \subset \delta_n(U)$)}\\
    & = \int_{U}n^4\varphi(\delta_n(y)) \diff \mathfrak{m}(y) && \text{(pushforward)}\\
    & = \int_U n^4 \varphi(\delta_n(y)) f(y) \diff \mathcal{L}^3(y) && \text{(aboslute continuity)}\\
    &= \int_{\R^3} n^4 \varphi(\delta_n(y))f(y) \diff \mathcal{L}^3(y) && \text{($\supp(\varphi\circ \delta_n) \subset U)$} \\
    & = \int_{\R^3} n^4 \varphi(z) f(\delta_{1/n}(z)) \frac{1}{n^4} \diff \mathcal{L}^3(z) && \text{(smooth area formula)}.
  \end{align*}
  By Lebesgue's Dominated Convergence Theorem and since $f$ is continuous at $\mathbf{0}$ by Assumption \labelcref{item:assumptionA}, we get
  \[
    \lim_{n \to \infty} \int_{\heis} \varphi \diff (\iota_n)_\sharp (\delta_n)_\sharp(n^4\mathfrak{m}) = \int_{\R^3} \lim_{n \to \infty} \varphi(z) f(\delta_{1/n}(z)) \diff \mathcal{L}^3(z) = \int_{\R^3} \varphi(z) f(\mathbf{0}) \diff \mathcal{L}^3(z),
  \]
  concluding the proof.
\end{proof}

\begin{theorem}
  The Lorentzian space $(\heis, \dis, \m, \leq, \ll, \uptau)$ does not satisfy the $\mathsf{TCD}(K, N)$ condition for any $K \in \R$ and any $N \in [1, +\infty]$.
\end{theorem}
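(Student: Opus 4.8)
The plan is to argue by contradiction and to reduce, through the chain of implications assembled in \cref{section:TCDTMCP}, to a single Brunn--Minkowski inequality for the Lebesgue measure which an explicit construction will then violate. Suppose $(\heis, \dis, \m, \le, \ll, \uptau)$ satisfied $\mathsf{TCD}(K,N)$ for some $K \in \R$ and some $N \in [1,+\infty]$. By \cref{prop:KNto0N} the Lebesgue-measured space $(\heis, \dis, \mathcal{L}^3, \le, \ll, \uptau)$ would then satisfy $\mathsf{TCD}(0,N)$; invoking \cite[Lemma 3.8]{Braun2023a} this forces $\mathsf{TCD}(0,+\infty)$, and \cref{prop:TCDtoTBM} yields the timelike Brunn--Minkowski inequality $\mathsf{TBM}(0,+\infty)$. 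Since for $K=0$ the distortion coefficients degenerate to $\tau^{(t)}_{0,N'}\equiv t$, this inequality reads, for every timelike $p$-dualizable pair of relatively compact Borel sets $A_0,A_1$ of positive measure and every $t\in[0,1]$,
\[
  \mathcal{L}^3\big(Z_t(A_0,A_1)\big)\ \ge\ \mathcal{L}^3(A_0)^{1-t}\,\mathcal{L}^3(A_1)^{t}.
\]
This multiplicative bound is the \emph{weakest} instance of the family: by the weighted arithmetic--geometric mean inequality every finite $N'$ produces a strictly larger right-hand side, and any $\mathsf{TCD}(0,N)$ implies this $N'=\infty$ version. Hence a single violation of the displayed inequality simultaneously rules out $\mathsf{TCD}(K,N)$ for all admissible $K$ and $N$, and it is on this inequality that I would concentrate.

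It then suffices to produce sets $A_0,A_1$ for which the timelike $t$-intermediate set $Z_t(A_0,A_1)$ has Lebesgue volume strictly below the geometric mean on the right. Here the emptiness of the cut locus (\cref{prop:cut_is_empty}) and the fact that $\exp^{(1)}_{\mathbf{0}}$ is a global diffeomorphism onto $I^+(\mathbf{0})$ (\cref{prop:expdiffeom}) are decisive: every chronologically related pair is joined by a \emph{unique} timelike geodesic and the space is timelike non-branching, so $Z_t(A_0,A_1)$ is exactly the image of the endpoint data under the geodesic flow, and its volume can be computed by a change of variables governed by the Jacobian of the exponential map (\cref{lemma:jacobian_exponential}). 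Concretely, I would fix $t\in(0,1)$, take $A_0$ a small ball around a fixed point and $A_1 \coloneqq B(q_w,\epsilon)$ a small ball around $q_w \coloneqq \exp^{(1)}_{\mathbf{0}}(u_w,v_w,w)$, and let the geodesic parameter $w\to+\infty$, i.e.\ push the configuration towards the null cone --- precisely the ``volumes at any distance from the origin'' mechanism anticipated after \cref{prop:cut_is_empty}. Timelike $p$-dualizability of the induced pair is guaranteed by $\mathcal{K}$-global hyperbolicity together with \cref{eq:compactsupportpdualizable}, provided the radii are chosen small enough that $A_0\times A_1 \subset \heis^2_{\ll}$.

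The heart of the argument, and the step I expect to be the main obstacle, is to show that the intermediate volume collapses super-polynomially along this family. For the fixed $t$ one must estimate $\mathcal{L}^3(Z_t(A_0,A_1))$ via the ratio of Jacobians $\mathcal{J}(t;\cdot)/\mathcal{J}(1;\cdot)$ of $\exp^{(t)}_{\mathbf{0}}$ and establish that it tends to $0$ faster than any power, so that
\[
  \frac{\mathcal{L}^3\big(Z_t(A_0,A_1)\big)}{\mathcal{L}^3(A_0)^{1-t}\,\mathcal{L}^3(A_1)^{t}}\ \xrightarrow[w\to+\infty]{}\ 0,
\]
which contradicts the displayed bound once $w$ is large. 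The exponential growth of the hyperbolic terms $\cosh(wt),\sinh(wt)$ appearing in \cref{def:exponential_map} is what drives this collapse, and the explicit diamond-volume asymptotics of \cref{prop:volume_growth_estimate} should serve both as a sanity check and, if convenient, as a direct route to the estimate. The delicate point is to make the Jacobian asymptotics uniform and to control the \emph{density} of $\mu_t$, not merely the support $Z_t$; this is exactly where the heuristic ``volumes contract too strongly for any finite dimensional bound'' is converted into a quantitative statement. The same construction, specialised to the case where one endpoint set degenerates to a point mass, is what I would use to obtain the companion failure of $\mathsf{TMCP}(K,N)$.
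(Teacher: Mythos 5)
Your reduction step is exactly the paper's: \cref{prop:KNto0N}, consistency, and \cref{prop:TCDtoTBM} bring everything down to the inequality $\mathcal{L}^3(Z_t(A_0,A_1)) \geq \mathcal{L}^3(A_0)^{1-t}\mathcal{L}^3(A_1)^{t}$ for the Lebesgue measure, and that part is correct. The gap is in the violating construction. With \emph{two} sets of positive measure you cannot compute $\mathcal{L}^3(Z_t(A_0,A_1))$ ``by a change of variables governed by the Jacobian of the exponential map'': that works only when one endpoint is a single point (this is precisely the $\mathsf{TMCP}$/half-Brunn--Minkowski situation, and it is how the paper handles $\mathsf{TMCP}$). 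When both endpoints vary over balls $B_\epsilon(p_0)$ and $B_\epsilon(q_w)$, the set $Z_t$ is a union over both variables, and as $\epsilon \to 0$ its volume behaves like that of a Minkowski sum $S(B_\epsilon) + T(B_\epsilon)$, where $S = D_x Z_t(\cdot,q_w)$ and $T = D_y Z_t(p_0,\cdot)$. This volume is \emph{not} controlled by the two determinants: if the contracted directions of $S$ and $T$ are transverse (model case $S = \mathrm{diag}(1,1,\delta)$, $T = \mathrm{diag}(\delta,\delta,1)$), the sum has volume of order $\mathcal{L}^3(B_\epsilon)$ even though $\abs{\det S}, \abs{\det T} \to 0$. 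So the exponential decay of the Jacobian \emph{determinant} ratios --- which is all that \cref{lemma:jacobian_exponential} provides --- does not yield the claimed collapse; one would need the full singular-value structure of $D\exp^{(t)}_{\mathbf{0}}$, which you do not compute (and neither does the paper). This is exactly the step you flag as ``the main obstacle,'' and it is the heart of the proof, so the argument as proposed is incomplete and possibly unfixable along this asymptotic route.

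The paper avoids this entirely by using a single \emph{fixed, symmetric} configuration rather than an asymptotic family: $q_{-1}=(-1,0,0)$, $q_0=\mathbf{0}$, $q_1=(1,0,0)$, $A_0^\epsilon = B_\epsilon(q_{-1})$ and $A_1^\epsilon = I_{q_0}(A_0^\epsilon)$, the image under the geodesic-inversion map $I_{q_0} = \exp^{(-1)}_{\mathbf{0}}\circ(\exp^{(1)}_{\mathbf{0}})^{-1}$. Two things then happen: (a) $I_{q_0}$ has Jacobian determinant identically $1$ (chain rule plus \cref{lemma:jacobian_exponential}), so the right-hand side of the Brunn--Minkowski inequality at $t=1/2$ equals $\mathcal{L}^3(A_0^\epsilon)$; and (b) Juillet's shrinking-sets theorem \cite[Theorem 1]{Juillet2010} --- whose hypotheses are tailored to precisely this midpoint/inversion configuration, where the identity $D_x M + D_y M \circ DI_{q_0} = 0$ makes the two spreads coincide up to sign and yields the clean factor $2^3$ --- bounds the limit ratio by $2^3\abs{\det D_{q_{-1}}M_{q_1}} = 2^3 \cdot \tfrac{1}{32} = \tfrac14 < 1$, a contradiction with no asymptotics needed. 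If you want to salvage your construction, note that your one-point (Dirac mass) version does go through by a genuine change of variables and disproves $\mathsf{TMCP}(K,N)$ for all finite $N$; combined with \cref{prop:TCDtoTMCP} this rules out $\mathsf{TCD}(K,N)$ for all $N<+\infty$, but the case $N=+\infty$ still requires a two-ball Brunn--Minkowski violation, so the gap above would remain for that case.
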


\begin{proof}
  The argument will follow Juillet's orginal idea, see \cite{juillet2009}, that we adapt to the Lorentzian setting. By \cref{prop:KNto0N}, it is enough to disprove the $\mathsf{TCD}(0, N)$ condition in $(\heis, \dis, \mathcal{L}^3, \leq, \ll, \uptau)$. We argue by contradiction, assuming that $\mathsf{TCD}(0, N)$ holds for some $N \in [1, +\infty]$. The consistency of the $\mathsf{TCD}$ implies that $\mathsf{TCD}(0, \infty)$ holds and, by \cref{prop:TCDtoTBM}, also $\mathsf{TBM}(0, \infty)$ is verified.

  Choosing $K = 0$ and $t = 1/2$ in \cref{Eq:BMinfty} means that for every relatively compact Borel sets $A_0,A_1\subset X$ with $\m(A_0)\,\m(A_1) >0$ such that
  \begin{align*}
    (\mu_0,\mu_1) \coloneqq (\mathcal{L}^3(A_0)^{-1}\,\mathcal{L}^3\mres A_0,\mathcal{L}^3(A_1)^{-1}\, \mathcal{L}^3\mres A_1).
  \end{align*}
  is timelike $p$-dualizable, it holds that
  \begin{equation}
    \label{eq:BMKinftyt1/2}
    \mathcal{L}^3(A_t) \geq \sqrt{\mathcal{L}^3(A_0) \mathcal{L}^3(A_1)}.
  \end{equation}
  We set
  \[
    q_{-1} \coloneqq (-1, 0, 0), \ q_{0} \coloneqq (0, 0, 0), \ q_1 \coloneqq (1, 0, 0),
  \]
  and we note that $q_{-1} \in I^{-}(q_0)$ while $q_1 \in I^+(q_0)$. We also define the so-called geodesic-inversion map as
  \[
    I_{q_0} : I^+(q_{-1}) \cup I^-(q_1) \to \heis : q \mapsto \exp_{\mathbf{0}}^{(-1)} \circ (\exp_{\mathbf{0}}^{(1)})^{-1} (q).
  \]
  Clearly, we have that
  \[
    I_{q_0}(\exp_{\mathbf{0}}^s(u, v, w)) = I_{q_0}(\exp_{\mathbf{0}}(s u, s v, s w)) = \exp_{\mathbf{0}}^{(-1)}(s u, s v, s w) = \exp_{\mathbf{0}}(-s u, -s v, -s w).
  \]
  In other words, the map $I_{q_0}$ takes a point joined by a unique geodesic through $q_0$ and sends it to a point the other side of the geodesic so that $q_0$ is their midpoint. For a small enough $\epsilon > 0$, consider a small compact ball $A^\epsilon_0 \coloneqq B_\epsilon(q_{-1})$ with centre $q_{-1}$ such that it is contained in \(I^-(q_0)\) and $A^\epsilon_1 \coloneqq I_{q_0}(A^\epsilon_{0})$. Note that the corresponding $(\mu_0, \mu_1)$ is $p$-timelike dualizable thanks to \cref{eq:compactsupportpdualizable}. Indeed, every point in $\mathrm{supp}(\mu_1)$ is in the chronological future of $q_0$, which is again in the chronological future of every point of $\mathrm{\mu_0}$.

  The inequality \cref{eq:BMKinftyt1/2} thus implies that
  \begin{equation}
    \label{eq:BMKinftyt1/2epsilon}
    \frac{\mathcal{L}^3(Z_{1/2}(A^\epsilon_0, I_{q_0}(A^\epsilon_{0})))}{\mathcal{L}^3(A^\epsilon_0)} \geq \sqrt{\frac{\mathcal{L}^3(I_{q_0}(A^\epsilon_{0}))}{\mathcal{L}^3(A^\epsilon_0)}}.
  \end{equation}

  We are going to take the limit in \cref{eq:BMKinftyt1/2epsilon} as $\epsilon$ tends to 0. Notice that by the chain rule and \Cref{lemma:jacobian_exponential}, we have that the Jacobian determinant of \(I_{q_0}\) is constantly equal to 1. By the area formula, this implies that \(I_{q_0}\) is a volume preserving map and thus the right hand side of \eqref{eq:BMKinftyt1/2epsilon} is equal to 1.
  To estimate the limit of the left hand-side of \cref{eq:BMKinftyt1/2epsilon} when $\epsilon$ goes to 0, we use the
  following result of Juillet, e.g. \cite[Theorem 1]{Juillet2010}:
  \[
    \lim_{\epsilon \to 0} \frac{\mathcal{L}^3(Z_{1/2}(A^\epsilon_0, I_{q_0}(A^\epsilon_{0})))}{\mathcal{L}^3(A^\epsilon_0)} \leq 2^3 \abs{D_{q_{-1}} M_{q_1}},
  \]
  where
  \[
    M_{q_1}(p) = \exp^{(1/2)}_{p} \circ (\exp_{p}^{(1)})^{-1}(q_1) = \exp_{q_1}^{(-1/2)} \circ (\exp_{q_1}^{(-1)})^{-1} (p)
  \]
  is a geodesic midpoint map. Since \(\exp_q^{(t)} =L_q \circ \exp_{\mathbf{0}}^{(t)}\) for any \(t \in \R\) and any \(q \in \heis\) and \(L_q\) are maps with Jacobian determinant equal to one, applying the chain rule immediately yields
  \[
    \abs{\det(D_{q_{-1}}M_{q_1})} = \abs{\frac{\det(D_{\tilde q}\exp^{(-1/2)}_{\mathbf{0}})}{\det(D_{\tilde q}\exp^{(-1)}_{\mathbf{0}})}}
  \]
  where
  \[
    \tilde q \coloneqq (\exp_{\mathbf{0}}^{(-1)})^{-1}(L_{q_1}^{-1}  (q_{-1})) = (\exp_{\mathbf{0}}^{(-1)})^{-1}(-2,0,0) = (2,0,0).
  \]
  We apply \cref{lemma:jacobian_exponential} to see that
  \[
    \abs{\det (D M_{q_1})_{\vert q_{-1}}} = \frac{(1/2)^5 *4/12}{4/12} = \frac{1}{32},
  \]
  and we conclude that $\mathsf{TBM}(0, +\infty)$ is not satisfied.
\end{proof}

\begin{theorem}
  The Lorentzian space $(\heis, \dis, \m, \leq, \ll, \uptau)$ does not satisfy the $\mathsf{TMCP}(K, N)$ for any $K \in \R$ and any $N \in [1, +\infty)$.
\end{theorem}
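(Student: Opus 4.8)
The plan is to run, for the $\mathsf{TMCP}$, the same chain of reductions as in the $\mathsf{TCD}$ argument, and then to exhibit a single target point toward which the time-midpoint contraction is arbitrarily strong. Suppose, for contradiction, that $(\heis,\dis,\m,\leq,\ll,\uptau)$ satisfies $\mathsf{TMCP}(K,N)$ for some $K\in\R$ and $N\in[1,+\infty)$. By \cref{prop:KNto0N} the structure $(\heis,\dis,\mathcal{L}^3,\leq,\ll,\uptau)$ then satisfies $\mathsf{TMCP}(0,N)$, in particular its future version $\mathsf{TMCP}^+(0,N)$, and by \cref{prop:TMCP->TBM1/2} the future half Brunn--Minkowski inequality $\mathsf{TBM}^{1/2+}(0,N)$ of \cref{def:halfBrunn-Minkowski}. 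Since $K=0$ forces $\tau^{(1-t)}_{0,N}(\Theta)=1-t$ irrespective of $\Theta$, evaluating at $t=1/2$ gives, for every relatively compact $A$ with $\mathcal{L}^3(A)>0$ and every $x_1\in I^+(A)$ making the pair timelike $p$-dualizable,
\[
  \mathcal{L}^3\bigl(Z_{1/2}(A,\{x_1\})\bigr)\ \geq\ 2^{-N}\,\mathcal{L}^3(A).
\]

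First I would turn this into a pointwise Jacobian bound. Fixing $x_1\in I^+(\mathbf{0})$ and taking $A=A_\epsilon:=B_\epsilon(\mathbf{0})$, the pair $(\mathcal{L}^3(A_\epsilon)^{-1}\mathcal{L}^3\mres A_\epsilon,\delta_{x_1})$ is timelike $p$-dualizable for small $\epsilon$ by \cref{eq:compactsupportpdualizable}. Because the cut locus is empty (\cref{prop:cut_is_empty}) and $\exp_{\mathbf{0}}^{(\pm 1)}$ are diffeomorphisms onto $I^{\pm}(\mathbf{0})$ (\cref{prop:expdiffeom}), each point near $\mathbf{0}$ is joined to $x_1$ by a unique timelike geodesic, so $Z_{1/2}(A_\epsilon,\{x_1\})$ is exactly the image of $A_\epsilon$ under the smooth midpoint map $M_{x_1}(p):=\exp_{x_1}^{(-1/2)}\bigl((\exp_{x_1}^{(-1)})^{-1}(p)\bigr)$, and the area formula yields $\lim_{\epsilon\to 0}\mathcal{L}^3(Z_{1/2}(A_\epsilon,\{x_1\}))/\mathcal{L}^3(A_\epsilon)=\bigl|\det D_{\mathbf{0}}M_{x_1}\bigr|$. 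Thus $\mathsf{TBM}^{1/2+}(0,N)$ forces $|\det D_{\mathbf{0}}M_{x_1}|\geq 2^{-N}$ for every admissible $x_1$. Using $\exp_{q}^{(s)}=L_q\circ\exp_{\mathbf{0}}^{(s)}$ and $\det DL_q=1$, the chain rule expresses this Jacobian, exactly as in the $\mathsf{TCD}$ computation, as the ratio
\[
  \bigl|\det D_{\mathbf{0}}M_{x_1}\bigr|=\frac{\bigl|\det D_{(u,v,w)}\exp_{\mathbf{0}}^{(-1/2)}\bigr|}{\bigl|\det D_{(u,v,w)}\exp_{\mathbf{0}}^{(-1)}\bigr|},
\]
where $(u,v,w)\in U$ is determined by $\exp_{\mathbf{0}}^{(-1)}(u,v,w)=x_1^{-1}$.

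The decisive step is the asymptotic analysis of this ratio. By \cref{lemma:jacobian_exponential}, together with the homogeneity of $\exp_{\mathbf{0}}^{(t)}$ under $(u,v,w)\mapsto(\lambda u,\lambda v,w)$ and the reparametrisation identity $\exp_{\mathbf{0}}^{(st)}(u,v,w)=\exp_{\mathbf{0}}^{(t)}(su,sv,sw)$, the Jacobian factors as $\det D_{(u,v,w)}\exp_{\mathbf{0}}^{(t)}=(u^2-v^2)\,t^5\,h(wt)$, with $h(w)=\dfrac{w\sinh w-2\cosh w+2}{w^4}$ an even, strictly positive function. Since $(u,v,w)\in U$ gives $u^2-v^2>0$, this factor cancels and the contraction ratio depends on $w$ alone:
\[
  \bigl|\det D_{\mathbf{0}}M_{x_1}\bigr|=\frac{1}{32}\,\frac{h(w/2)}{h(w)}.
\]
As $w\to+\infty$ one has $h(w)\sim e^{w}/(2w^3)$, so the ratio behaves like $\tfrac14 e^{-w/2}$ and tends to $0$. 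Hence, for any fixed $N<+\infty$, choosing $x_1\in I^+(\mathbf{0})$ corresponding to a sufficiently large $w$ (a genuinely hyperbolic geodesic enclosing large area relative to its endpoint displacement) makes $|\det D_{\mathbf{0}}M_{x_1}|<2^{-N}$, contradicting the bound forced by $\mathsf{TBM}^{1/2+}(0,N)$. This disproves $\mathsf{TMCP}^+(0,N)$, hence $\mathsf{TMCP}(K,N)$ for all $K\in\R$ and all finite $N$.

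I expect the main obstacle to lie in the reduction of the second paragraph rather than in the algebra: one must ensure that the emptiness of the cut locus makes $Z_{1/2}(A_\epsilon,\{x_1\})$ \emph{genuinely} the diffeomorphic image of $A_\epsilon$, so that the volume ratio converges to the midpoint Jacobian with no stray constant, in contrast with the factor $2^3$ that appears in the two–set $\mathsf{TCD}$ computation. The conceptual reason behind the degeneration $h(w/2)/h(w)\to 0$ is that, unlike the sub-Riemannian Heisenberg group where $\mathsf{MCP}(0,N)$ holds for $N\geq 5$, here the hyperbolic (rather than oscillatory) exponential map has no conjugate points, so the contraction toward a receding target becomes exponentially strong and no finite dimensional parameter $N$ can absorb it.
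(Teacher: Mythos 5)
Your proposal is correct and takes essentially the same route as the paper's proof: reduce via \cref{prop:KNto0N} and \cref{prop:TMCP->TBM1/2} to disproving $\mathsf{TBM}^{1/2+}(0,N)$ for $(\heis, \dis, \mathcal{L}^3, \leq, \ll, \uptau)$, turn that inequality on shrinking balls into a pointwise lower bound on a ratio of Jacobian determinants of exponential maps (using uniqueness of geodesics so that the intermediate set is a diffeomorphic image), and contradict it by letting $w \to \pm\infty$ in \cref{lemma:jacobian_exponential}. The only differences are cosmetic: you fix $t = 1/2$ and phrase the bound through the midpoint map with the ball at $\mathbf{0}$ and the target $x_1 \in I^+(\mathbf{0})$, while the paper keeps $t \in (0,1)$ general and places the ball at $q \in I^-(\mathbf{0})$ with target $\mathbf{0}$; your closed-form factorization $\det D\exp_{\mathbf{0}}^{(t)} = (u^2-v^2)\,t^5\,h(wt)$ and the resulting asymptotic $\tfrac{1}{4}e^{-w/2}$ are a clean (and correct) repackaging of the paper's direct computation.
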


\begin{proof}
  It is again enough to disprove the $\mathsf{TMCP}(0, N)$ condition in $(\heis, \dis, \mathcal{L}^3, \leq, \ll, \uptau)$, thanks to \cref{prop:KNto0N}. As before, we argue by contradiction, assuming that the $\mathsf{TMCP}(0, N)$ holds for some $N \in [1, +\infty)$. By \cref{prop:TMCP->TBM1/2}, it follows that both $\mathsf{TBM}^{1/2+}_{p}(0,N)$ and $\mathsf{TBM}^{1/2-}_{p}(0,N)$ must hold. We will disprove $\mathsf{TBM}^{1/2+}_{p}(0,N)$, which suffices to establish the theorem; the argument for the failure of $\mathsf{TBM}^{1/2-}_{p}(0,N)$ is entirely analogous.

  Let $q \in I^{-}(\mathbf{0})$ and $\epsilon > 0$ sufficiently small so that $A_\epsilon \coloneqq B_{\epsilon}(q) \subseteq I^{-}(\mathbf{0})$. Clearly, $\mathcal{L}^3(A_\epsilon) > 0$ and $\mathbf{0} \in I^{+}(A_\epsilon)$ for all small $\epsilon > 0$. We claim that the pair given by $(\mathcal{L}^3(A_\varepsilon)^{-1}\,\mathcal{L}^3 \mres A_\varepsilon,\delta_{\mathbf{0}})$ is $p$-dualizable by \cref{eq:compactsupportpdualizable}: writing $\mu_0 \coloneqq \mathcal{L}^3(A_\varepsilon)^{-1}\,\mathcal{L}^3$, we observe that $\operatorname{supp}(\mu_0 \times \delta_{\mathbf{0}}) = A_\epsilon \times \{\mathbf{0}\}$, which is entirely contained in $\heis^2_\ll$ since $A_\epsilon \subseteq I^{-}(\mathbf{0})$.

  The $\mathsf{TBM}^{1/2+}_{p}(0,N)$ condition implies that
  \begin{equation}
    \label{eq:TBM0Nepsilon}
    \mathcal{L}^3(Z_t(A_\epsilon, \mathbf{0})) \geq (1 - t)^N \mathcal{L}^3(A_\epsilon), \qquad \text{ for all } t \in [0, 1).
  \end{equation}
  We know that $\exp_{\mathbf{0}}^{(-1)}$ is a diffeomorphism from \(\{u \geq \abs{v} \times \R\}\) onto $I^{-}(\mathbf{0})$, thus there is an open set $\mathcal{U}_\epsilon \subseteq \T_{\mathbf{0}}^*(\heis)$ such that $A_\epsilon = \exp_{\mathbf{0}}(\mathcal{U})$. Actually, by definition of the $t$-intermediate set appearing in \cref{eq:tintermediate} and by the uniqueness property of geodesics in the sub-Lorentzian Heisenberg group, we also have that $Z_t(A_\epsilon, \mathbf{0}) = \exp_{\mathbf{0}}^{(t - 1)}(\mathcal{U}_\epsilon)$. Therefore, we can obtain
  \begin{align*}
    \frac{\abs{D_{(u, v, w)} \exp_{\mathbf{0}}^{(t-1)}}}{\abs{D_{(u, v, w)} \exp_{\mathbf{0}}^{(-1)}}} & = \lim_{\epsilon \to 0} \frac{\int_{\mathcal{U}_\epsilon}(\exp_{\mathbf{0}}^{(t - 1)})^* \diff u \wedge \diff v \wedge \diff w}{\int_{\mathcal{U}_\epsilon}(\exp_{\mathbf{0}}^{(-1)})^* \diff u \wedge \diff v \wedge \diff w} = \lim_{\epsilon \to 0} \frac{\mathcal{L}^3(\exp_{\mathbf{0}}^{(t - 1)}(\mathcal{U}_\epsilon))}{\mathcal{L}^3(\exp_{\mathbf{0}}^{(-1)}(\mathcal{U}_\epsilon))} \\
    & = \lim_{\epsilon \to 0} \frac{\mathcal{L}^3(Z_t(A_\epsilon, \mathbf{0}))}{\mathcal{L}^3(A_\epsilon)} \geq t^N, \qquad \text{ for all } t \in [0, 1),
  \end{align*}
  where $(u, v, w)$ is the unique vector in \(\{u \geq \abs{v} \times \R\}\) such that $q = \exp_{\mathbf{0}}^{(-1)}(u, v, w)$ and where the last inequality follows from \cref{eq:TBM0Nepsilon}. By the arbitraryness of $q \in I^{-}(\mathbf{0})$, this inequality holds for all \((u, v, w) \in \{u \geq \abs{v} \times \R\}\). By using \cref{lemma:jacobian_exponential}, one can see that, for all $t \in (0, 1)$,
  \begin{equation}
    \label{eq:notMCPcomputation}
    \begin{aligned}
      \frac{\abs{D_{(u, v, w)} \exp_{\mathbf{0}}^{(t-1)}}}{\abs{D_{(u, v, w)} \exp_{\mathbf{0}}^{(-1)}}} = \frac{(t - 1) \sinh(\tfrac{w(t - 1)}{2}) (\tfrac{w(t - 1)}{2} \cosh(\tfrac{w(t - 1)}{2}) - \sinh(\tfrac{w(t - 1)}{2}))}{t \sinh(\tfrac{w t}{2})(\tfrac{w t}{2} \cosh(\tfrac{w t}{2}) - \sinh(\tfrac{w t}{2}))} \longrightarrow 0,
    \end{aligned}
  \end{equation}
  as $w \to - \infty$, just by noting that \(e^{(t-1)w}\) grows slower than \(e^{-w}\).
\end{proof}

\begin{remark}
  The limit \cref{eq:notMCPcomputation} is stronger than the failure of the $\mathsf{TMCP}$. It implies that the sub-Lorentzian structure would fail to satisfy any reasonable timelike version of the notion of \emph{qualitative non-degeneracy} introduced in \cite[Assumption 1]{Cavalletti2015}.
\end{remark}

\appendix
\section*{\appendixname}
\addcontentsline{toc}{section}{\appendixname}
\stepcounter{section}
\renewcommand{\thesection}{A}
\renewcommand{\thefigure}{\thesection.\arabic{figure}}
In this short appendix, we collect some of the more technical steps and computations carried out throughout the paper.

The following lemma provides a formula for the Jacobian determinant of the exponential map introduced in \Cref{def:exponential_map}. The key idea, which simplifies several of the computations, follows an argument similar to that presented in \cite{juillet2009}:
\begin{lemma}\label{lemma:jacobian_exponential}
  The sub-Lorentzian exponential map of the Heisenberg group, that appeared in \cref{def:exponential_map}, satisfies
  \[
    {D_{(u,v,w)} \exp^{(t)}_\mathbf{0}} = 4t(u^2-v^2) \, \frac{\sinh(wt/2)}{w} \, \frac{wt/2 \cosh(wt/2) - \sinh(wt/2)}{w^3},
  \]
  which admits a smooth extension at \(w=0\), given by
  \[
    {D_{(u,v,0)} \exp^{(t)}_\mathbf{0}} = \frac{t^5}{12}(u^2-v^2).
  \]
\end{lemma}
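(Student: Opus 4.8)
The plan is to reduce the computation to the single value $t = 1$ by exploiting a scaling symmetry, and then to evaluate one $3\times 3$ determinant in which the hyperbolic identities force a clean cancellation. The key first observation --- this is the simplification borrowed from \cite{juillet2009} --- is that the formula in \Cref{def:exponential_map} depends on $(t,u,v,w)$ only through the products $(tu,tv,tw)$: substituting $(tu,tv,tw)$ for $(u,v,w)$ and $1$ for $t$ leaves each of the three components unchanged, so that
\[
  \exp_{\zero}^{(t)}(u,v,w) = \exp_{\zero}^{(1)}(tu,tv,tw).
\]
Since the inner map $(u,v,w)\mapsto(tu,tv,tw)$ is scalar multiplication by $t$ with Jacobian determinant $t^3$, the chain rule gives $D_{(u,v,w)}\exp_{\zero}^{(t)} = t^3\,\bigl(D\exp_{\zero}^{(1)}\bigr)(tu,tv,tw)$. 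Thus it suffices to compute $J(u,v,w)\coloneqq D_{(u,v,w)}\exp_{\zero}^{(1)}$ and substitute; one checks directly that this substitution turns the factored expression below into precisely the stated formula for general $t$.

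Next I would compute $J$ by a cofactor expansion of the Jacobian matrix along the $z$-row. Writing $S\coloneqq\sinh w$ and $P\coloneqq\cosh w-1$, the $u$- and $v$-derivatives are immediate: the upper-left $2\times2$ block has rows $(S/w,\,P/w)$ and $(P/w,\,S/w)$, while $\partial_u z = u\,\tfrac{S-w}{w^2}$ and $\partial_v z = -v\,\tfrac{S-w}{w^2}$ share a common factor. The only laborious entries are $\partial_w x$, $\partial_w y$, $\partial_w z$, which I would record once. Expanding along the third row, the minor multiplying $\partial_w z$ collapses at once to $2P/w^2$ via the identity $\sinh^2 w-(\cosh w-1)^2 = 2(\cosh w-1)$ (equivalently $P^2-S^2=-2P$), and the two remaining minors, multiplied against the shared factor of $\partial_u z,\partial_v z$, assemble into an overall factor of $(u^2-v^2)$.

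The crux of the argument --- and the step I expect to be the main obstacle --- is the algebraic cancellation inside the bracket that survives this expansion. After repeated use of $P^2-S^2=-2P$, the two remaining contributions $(S-w)(2P-wS)$ and $P(wP+2w-2S)$ add up, modulo a factor $(u^2-v^2)/w^5$, to $w(wS-2P)$, giving $J(u,v,w)=(u^2-v^2)(wS-2P)/w^4$. I would then rewrite the factor $wS-2P$ through the half-angle identities $\sinh w = 2\sinh(w/2)\cosh(w/2)$ and $\cosh w-1 = 2\sinh^2(w/2)$ as
\[
  wS - 2P = 4\sinh(w/2)\bigl((w/2)\cosh(w/2) - \sinh(w/2)\bigr),
\]
which puts $J$ in exactly the factored form $4(u^2-v^2)\tfrac{\sinh(w/2)}{w}\tfrac{(w/2)\cosh(w/2)-\sinh(w/2)}{w^3}$ of the statement at $t=1$; feeding this back through the scaling relation recovers the general formula.

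Finally I would treat the removable singularity at $w=0$ by Taylor expansion. Since $\sinh(wt/2) = \tfrac{wt}{2}+O(w^3)$ and $(wt/2)\cosh(wt/2)-\sinh(wt/2) = \tfrac{1}{3}(wt/2)^3+O(w^5)$, the powers of $w$ in the two denominators cancel, so the expression extends to an even analytic function of $w$; evaluating the leading terms gives the value $4t(u^2-v^2)\cdot\tfrac{t}{2}\cdot\tfrac{t^3}{24} = \tfrac{t^5}{12}(u^2-v^2)$ at $w=0$, which establishes the claimed smooth extension.
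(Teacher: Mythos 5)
Your proposal is correct, and it follows the paper's strategy in all essentials: the same Juillet-style scaling identity $\exp_{\mathbf{0}}^{(t)}(u,v,w)=\exp_{\mathbf{0}}^{(1)}(tu,tv,tw)$ producing the $t^3$ factor via the chain rule, followed by a $3\times 3$ determinant computation, and the same half-angle identities to reach the factored form; the Taylor-expansion treatment of the removable singularity at $w=0$ also matches, giving $\tfrac{t^5}{12}(u^2-v^2)$. The only divergence is in how the determinant is evaluated: the paper first exploits the equivariance $\varphi\circ R=R\circ\varphi$, where $R$ is the unit-determinant Lorentz boost acting in the $(u,v)$-plane, to reduce to the case $v=0$, so that the Jacobian matrix acquires a zero entry and the Laplace expansion along the last row is very short; you instead expand the full matrix at general $(u,v,w)$. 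Your key cancellation checks out: with $S=\sinh w$ and $P=\cosh w-1$, the third-row expansion gives $\frac{u^2-v^2}{w^5}\left[(S-w)(2P-wS)+P(wP+2w-2S)\right]$, and the bracket equals $-wS^2+w^2S+wP^2=w(P^2-S^2)+w^2S=w(wS-2P)$, yielding $(u^2-v^2)(wS-2P)/w^4$, which is exactly the paper's formula at $t=1$. The boost reduction buys a sparser matrix and records a symmetry of the exponential map, while your direct expansion avoids introducing $R$ at the cost of somewhat heavier algebra; both routes are sound.
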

\begin{proof}
  We consider the map \(T \colon \R^3 \to \R^3\) defined by $T(u, v, w) := (t u, t v, t w)$
  and the map \(\varphi \colon \R^3 \to \R^3\) given by
  \[
    \begin{pmatrix}
      a\\
      b\\
      c
    \end{pmatrix}
    \mapsto
    \renewcommand\arraystretch{2}
    \begin{pmatrix}
      \dfrac{b(\cosh(c)-1) +a\sinh(c)}{c}\\
      \dfrac{b\sinh(c) +a(\cosh(c)-1)}{c}\\
      \dfrac{a^2-b^2}{2} \cdot \dfrac{\sinh(c)-c}{c^2}
    \end{pmatrix}.
  \]
  It is immediate to see that \(\exp^{(t)}_\mathbf{0} = \varphi \circ T\). Hence we have that
  \[
    {D_{(u,v,w)}\exp_\mathbf{0}^{(t)}} = D_{(ut,vt,wt)}\varphi \cdot D_{(u,v,w)}T = t^3 D_{(ut,vt,wt)}\varphi,
  \]
  so it remains only to compute \(D\varphi\). We also observe that for any pair of real numbers \(\alpha, \beta\) with \(\alpha > \abs{\beta}\) one can define the linear map \(R \colon \R^3 \to \R^3\) by
  \[
    R(a,b,c) \coloneqq
    \begin{pmatrix}
      \dfrac{\alpha}{\sqrt{\alpha^2 - \beta^2}} & \dfrac{\beta}{\sqrt{\alpha^2 - \beta^2}} & 0\\[1em]
      \dfrac{\beta}{\sqrt{\alpha^2 - \beta^2}} & \dfrac{\alpha}{\sqrt{\alpha^2 - \beta^2}} & 0\\[1em]
      0 & 0 & 1
    \end{pmatrix}
    \begin{pmatrix}
      a\\
      b\\
      c
    \end{pmatrix}.
  \]
  Its Jacobian determinant is equal to \(1\). Moreover, a straightforward computation shows that \(\varphi \circ R = R \circ \varphi\), which in turn implies
  \[
    D_{(a,b,c)}\varphi = D_{R(a,b,c)}\varphi.
  \]
  Since we want to compute the determinant of the exponential map, we can restrict attention to points \((a,b,c)\) with \(a > \abs{b}\). The previous equation then allows us to use \(\alpha = a\) and \(\beta = b\) to get that
  \[
    D_{(a,b,c)}\varphi = D_{(\sqrt{a^2-b^2}, 0, c)}\varphi,
  \]
  which tells us that it is not restrictive to assume that \(b=0\). We now compute the Jacobian matrix \(J\) of \(\varphi\) and evaluate it at \((a,0,c)\); we get
  \[
    J =
    \renewcommand\arraystretch{2}
    \begin{pmatrix}
      \dfrac{\sinh(c)}{c} & \dfrac{\cosh(c)-1}{c} & a \cdot \dfrac{c\cosh(c)-\sinh(c)}{c^2}\\
      \dfrac{\cosh(c)-1}{c} & \dfrac{\sinh(c)}{c} & a \cdot \dfrac{c\sinh(c)-(\cosh(c)-1)}{c^2}\\
      a \cdot\dfrac{\sinh(c)-c}{c^2} & 0 & \dfrac{a^2}{2} \cdot \dfrac{c^2(\cosh(c)-1) -2c(\sinh(c)-c)}{c^4}
    \end{pmatrix}.
  \]
  To compute its determinant, we can collect a factor \(a\) from the last row and the last column. We can also collect a factor \(1/c\) from the first two columns and the last row and a factor \(1/c^2\) from the last column. Overall, we get that
  \begin{align*}
    \det(J) &= \frac{a^2}{c^5}
    \begin{vmatrix}
      \sinh(c) & \cosh(c)-1 & c\cosh(c)-\sinh(c)\\
      \cosh(c)-1 & \sinh(c) & c\sinh(c) -(\cosh(c)-1)\\
      \sinh(c) -c & 0 & \dfrac{c^2(\cosh(c)-1) -2c(\sinh(c)-c)}{2c}
    \end{vmatrix}\\
    &= \dfrac{a^2}{c^4}
    \begin{vmatrix}
      \sinh(c) & \cosh(c)-1 & 1\\
      \cosh(c)-1 & \sinh(c) & 0 \\
      \sinh(c) -c & 0 & \dfrac{\cosh(c)-1}{2}
    \end{vmatrix},
  \end{align*}
  where the second equality is obtained with elementary algebraic simplifications.
  We now expand the determinant using Laplace's formula on the last row:
  \[
    \begin{split}
      \det(J) &= \frac{a^2}{c^4} \left[ (\sinh(c)-c)(-\sinh(c)) +\frac{\cosh(c)-1}{2}(-2+2\cosh(c)) \right]\\
      &= \frac{a^2}{c^4} (c\sinh(c) -2(\cosh(c)-1)).
    \end{split}
  \]
  Finally, we
  use the identities
  \[
    \sinh^2(c/2) = \frac{\cosh(c)-1}{2}, \quad \sinh(c) = 2\sinh(c/2)\cosh(c/2),
  \]
  to rewrite the previous expression as
  \[
    \det(J) = \frac{4a^2}{c^4}\sinh(c/2) (c/2\cosh(c/2) -\sinh(c/2))
  \]
  Retracing the simplification, to obtain the Jacobian determinant of the exponential map we multiply the preceding expression by \(t^3\) and substitute $a = \sqrt{(tu)^{\smash{2}} - (tv)^{\smash{2}}}$ and \(c = tw\), which yields the claim.
\end{proof}

The following lemma concludes the proof of \Cref{prop:volume_growth_estimate} by showing that the volume of unitary diamonds with one vertex being the origin goes to zero as the other vertex gets further from the origin:
\begin{lemma}\label{lemma:volume_limit_is_zero}
  It holds that
  \[
    \begin{split}
      \lim_{w \to +\infty} \frac{e^{2w}}{w^4} &\left[ \dfrac{-e^{-w} + e^{-2w} +we^{-w}}{(e^{-w}-1)^2} -  \left(\dfrac{-e^{-w} + e^{-2w} +we^{-w}}{(e^{-w}-1)^2}\right)^2 \right.\\
        & + \left(\dfrac{-e^{-w} + e^{-2w} +we^{-w}}{(e^{-w}-1)^2}\right)^2
        \ln \left( \dfrac{-e^{-w} + e^{-2w} +we^{-w}}{(e^{-w}-1)^2}\right)\\
        & \left.
        +\left(1 + \frac{e^{-w} -e^{-2w}-we^{-w}}{(e^{-w}-1)^2}\right)^2 \cdot  \ln \left( 1 + \frac{e^{-w} -e^{-2w}-we^{-w}}{(e^{-w}-1)^2}\right)
      \right]=0
    \end{split}
  \]
\end{lemma}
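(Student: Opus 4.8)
The plan is to recognise the bracketed expression as the volume density $g(m) = mM + m^2\ln m + M^2\ln M$ from \cref{prop:volume_growth_estimate}, now regarded as a function of $w$ through a single auxiliary quantity. First I would abbreviate
\[
  m = m(w) \coloneqq \frac{-e^{-w} + e^{-2w} + w e^{-w}}{(e^{-w}-1)^2} = \frac{e^{-w}\bigl(w - 1 + e^{-w}\bigr)}{(1 - e^{-w})^2},
\]
and observe that the two fractions appearing in the statement are exactly $m$ and $1-m$, since $\frac{e^{-w} - e^{-2w} - w e^{-w}}{(e^{-w}-1)^2} = -m$. Hence the bracket equals
\[
  G(w) = m(1-m) + m^2\ln m + (1-m)^2\ln(1-m),
\]
and the claim reduces to showing $\tfrac{e^{2w}}{w^4}\,G(w)\to 0$ as $w\to+\infty$.

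Next I would record the asymptotics of $m$. From the closed form above, $(1-e^{-w})^2\to 1$ and the numerator is $e^{-w}(w-1+e^{-w})$, so $m\to 0^+$ with the sharp estimate $e^{w}m/w\to 1$, i.e.\ $m\sim w e^{-w}$. Taking logarithms gives $\ln m = -w + \ln\!\bigl(w - 1 + e^{-w}\bigr) - 2\ln(1 - e^{-w}) = -w + \ln w + o(1)$, so in particular $w^{-1}\ln m\to -1$ and $e^{2w}m^2/w^2\to 1$. These are the only two facts about $m$ that I will use.

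The third step is a Taylor expansion of $G$ near $m=0$. Since $m(1-m) = m - m^2$ exactly and $(1-m)^2\ln(1-m) = -m + \tfrac32 m^2 + O(m^3)$ is an ordinary (logarithm-free) power series, the two linear terms cancel and I obtain
\[
  G(w) = m^2\ln m + \tfrac12 m^2 + O(m^3), \qquad m\to 0.
\]
Multiplying by $\tfrac{e^{2w}}{w^4}$ and splitting into three pieces finishes the argument: using $e^{2w}m^2/w^2\to 1$, the term $\tfrac{e^{2w}}{w^4}\cdot\tfrac12 m^2\sim\tfrac{1}{2w^2}\to 0$; the term $\tfrac{e^{2w}}{w^4}\,m^2\ln m = \tfrac{e^{2w}m^2}{w^2}\cdot\tfrac{\ln m}{w^2}$ tends to $0$ because $\tfrac{\ln m}{w^2} = \tfrac1w\cdot\tfrac{\ln m}{w}\to 0$; and the remainder $\tfrac{e^{2w}}{w^4}O(m^3) = \tfrac{e^{2w}m^2}{w^2}\cdot O\!\bigl(\tfrac{m}{w^2}\bigr)$ is $O(e^{-w}/w)\to 0$.

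The only delicate point is the middle term $m^2\ln m$, which is the dominant contribution and whose vanishing hinges on the logarithmic refinement $\ln m = -w + O(\ln w)$; the expansion of $G$ and the control of the cubic remainder are routine once the cancellation of the linear terms is noted. I expect this logarithmic asymptotic of $m$ to be the main (and essentially only) obstacle.
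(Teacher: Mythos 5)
Your proof is correct and follows essentially the same route as the paper's: both arguments hinge on the asymptotics $m \sim w e^{-w}$ and $\ln m \sim -w$, together with a Taylor expansion of $\ln(1-m)$ whose leading term cancels the linear contribution $m(1-m)$, leaving only $m^2\ln m + \tfrac12 m^2 + O(m^3)$ to be killed by the prefactor $e^{2w}/w^4$. Your substitution of the single variable $m$ and the clean second-order expansion of $(1-m)^2\ln(1-m)$ simply package more transparently what the paper carries out term by term with the explicit exponential expressions.
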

\begin{proof}
  Distributing the outer \(e^{2w}/w^4\) yields
  \[
    \begin{split}
      \lim_{w \to +\infty}  & \dfrac{-e^{w} + 1 +we^{w}}{w^4(e^{-w}-1)^2} -  \left(\dfrac{-1 + e^{-w} +w}{w^2(e^{-w}-1)^2}\right)^2 \\
      & + \left(\dfrac{-1 + e^{-w} +w}{w^2(e^{-w}-1)^2}\right)^2
      \ln \left( \dfrac{-e^{-w} + e^{-2w} +we^{-w}}{(e^{-w}-1)^2}\right)\\
      &
      +\left(\frac{e^w}{w^2} + \frac{1 -e^{-w}-w}{w^2(e^{-w}-1)^2}\right)^2 \cdot  \ln \left( 1 + \frac{e^{-w} -e^{-2w}-we^{-w}}{(e^{-w}-1)^2}\right).
    \end{split}
  \]
  We can already see that the limit of the second term is zero. The third term is also vanishing: indeed we have that the first factor is asymptotic to \(1/w^2\) as \(w \to +\infty\), so we may compute the limit as
  \[
    \begin{split}
      \lim_{w \to + \infty}& \dfrac{1}{w^2}
      \ln \left( \dfrac{-e^{-w} + e^{-2w} +we^{-w}}{(e^{-w}-1)^2}\right)\\
      =\lim_{w \to + \infty}& \frac{1}{w^2}\ln\left( we^{-w} \cdot \frac{-1/w + e^{-w}/w +1}{(e^{-w}-1)} \right)\\
      =\lim_{w \to + \infty}& \frac{1}{w^2} \left[\ln(w) -w + \ln \left( \dfrac{-1/w + e^{-w}/w +1}{(e^{-w}-1)^2}\right)\right].
    \end{split}
  \]
  Distributing the factor \(1/w^2\) it is easy to see that all terms are going to 0 (the argument of the logarithm is approaching 1). Hence we are just left with
  \[
    \begin{split}
      \lim_{w \to +\infty}  & \dfrac{-e^{w} + 1 +we^{w}}{w^4(e^{-w}-1)^2} \\
      &+ \left(\frac{e^w}{w^2} + \frac{1 -e^{-w}-w}{w^2(e^{-w}-1)^2}\right)^2 \cdot  \ln \left( 1 + \frac{e^{-w} -e^{-2w}-we^{-w}}{(e^{-w}-1)^2}\right)\\
      =\lim_{w \to +\infty}  & \dfrac{-e^{w} + 1 +we^{w}}{w^4(e^{-w}-1)^2} +\frac{(-1 +e^w -w)^2}{w^4(e^{-w} -1)^4} \cdot \ln \left( 1 + \frac{e^{-w} -e^{-2w}-we^{-w}}{(e^{-w}-1)^2}\right)
    \end{split}
  \]
  Notice that the logarithm is in the form \(\ln(1+f(w))\) with \(f(w) \to 0\) as \(w \to +\infty\). Moreover \(f(w) \sim -we^{-w}\), so we may use Taylor series to write
  \[
    \lim_{w \to +\infty}  \dfrac{-e^{w} + 1 +we^{w}}{w^4(e^{-w}-1)^2} +\frac{(-1 +e^w -w)^2}{w^4(e^{-w} -1)^4} \cdot \left(\frac{e^{-w} -e^{-2w}-we^{-w}}{(e^{-w}-1)^2} +O(w^2e^{-2w}) \right).
  \]
  If we expand the product and look at the last term we get, we find that
  \[
    \lim_{w \to +\infty} \frac{(-1+e^w -w)^2}{w^4(e^{-w}-1)^4}O(w^2e^{-2w}) = \lim_{w \to +\infty} \frac{(-e^{-w}+1 -we^{-w})^2}{(e^{-w} -1)^4} \frac{O(w^2)}{w^4} = 0,
  \]
  since the first fraction approaches 1 and the second fraction is going to 0. Therefore we just need to compute
  \[
    \lim_{w \to +\infty}  \dfrac{-e^{w} + 1 +we^{w}}{w^4(e^{-w}-1)^2} +\frac{(-1 +e^w -w)^2}{w^4(e^{-w} -1)^4} \cdot \frac{e^{-w} -e^{-2w}-we^{-w}}{(e^{-w}-1)^2}.
  \]
  As all denominators have a common factor of \((e^{-w}-1)^2\), which goes to 1 as \(w \to +\infty\), we can ignore it in our computations and just look at
  \[
    \begin{split}
      \lim_{w \to +\infty}  &\dfrac{-e^{w} + 1 +we^{w}}{w^4} +\frac{(-1 +e^w -w)^2}{w^4(e^{-w} -1)^4} \cdot (e^{-w} -e^{-2w}-we^{-w})\\
      =\lim_{w \to +\infty} &-e^{2w} \frac{e^{-w} -e^{-2w} -we^{-w}}{w^4} +\frac{(-1 +e^w -w)^2}{w^4(e^{-w} -1)^4} \cdot (e^{-w} -e^{-2w}-we^{-w}) \\
      =\lim_{w \to +\infty} & \frac{e^{-w} -e^{-2w} -we^{-w}}{w^4} \left( \frac{(-1+e^w-w)^2 -e^{2w}(e^{-w}-1)^4}{(e^{-w}-1)^4}  \right) \\
    \end{split}
  \]
  We can Taylor expand \((e^{-w}-1)^4\) as \(1 -4e^{-w} +o(e^{-w})\) and collect a factor \(e^{-w}\) from the first fraction, which we then distribute on the second one. We expand the square term as well, so that we get
  \[
    \begin{split}
      \lim_{w \to +\infty} & \frac{1 -e^{-w} -w}{w^4} \cdot \frac{e^{-w} +e^{w}(-4e^{-w} +o(e^{-w}) ) +w^2e^{-w} -2 +2we^{-w} -2w}{1 -4e^{-w} +o(e^{-w})}
      \\ \lim_{w \to +\infty} & \frac{1 -e^{-w} -w}{w^2} \cdot \frac{e^{-w} -4 +o(1)  +w^2e^{-w} -2 +2we^{-w} -2w}{w^2(1 -4e^{-w} +o(e^{-w}))},
    \end{split}
  \]
  and we finally see that both fractions are going to 0.
\end{proof}

The next lemma is used in the proof of \Cref{prop:volume_growth_estimate} to explicitly compute the integral in \eqref{eq:integral_with_max_min}:
\begin{lemma}\label{lemma:integral_computations}
  The integral featured in \eqref{eq:integral_with_max_min} is equal to
  \[
    \mathcal{L}^3(J(\mathbf{0},q)) =-\frac{a^4}{8} \left( mM + M^2 \ln(M) + m^2\ln(m) \right),
  \]
  where we set
  \[
    m \coloneqq \frac{1}{2} \left( 1 + \frac{4c}{a^2} \right), \ M \coloneqq \frac{1}{2} \left( 1 - \frac{4c}{a^2} \right).
  \]
\end{lemma}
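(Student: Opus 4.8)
The plan is to evaluate the iterated integral in \cref{eq:integral_with_max_min} directly, exploiting the fact that, for each fixed $(x,y)$ in the Minkowskian diamond $Q$, the $z$-fibre of $J(\mathbf{0},q)$ is the intersection of two intervals: the interval $[-\alpha,\alpha]$ centred at $0$ with half-width $\alpha \coloneqq \tfrac{x^2-y^2}{4}$, coming from $J^+(\mathbf{0})$, and the interval $[\beta-\gamma,\beta+\gamma]$ centred at $\beta \coloneqq c + \tfrac{ay}{2}$ with half-width $\gamma \coloneqq \tfrac{(x-a)^2-y^2}{4}$, coming from $J^-(q)$. Both half-widths are nonnegative on $Q$. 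The length of the overlap is therefore governed by the elementary four-case formula for the intersection of two intervals: it is $0$ when the intervals are disjoint ($\abs{\beta} > \alpha+\gamma$), equals $2\alpha$ when $[-\alpha,\alpha]\subseteq[\beta-\gamma,\beta+\gamma]$ (i.e.\ $\abs{\beta}\le\gamma-\alpha$), equals $2\gamma$ in the reverse containment ($\abs{\beta}\le\alpha-\gamma$), and equals $\alpha+\gamma-\abs{\beta}$ in the partial-overlap regime $\abs{\alpha-\gamma}\le\abs{\beta}\le\alpha+\gamma$. The two quantities controlling these cases are computed once and for all: $\alpha-\gamma = \tfrac{a(2x-a)}{4}$ and $\alpha+\gamma = \tfrac{2x^2-2y^2-2ax+a^2}{4}$.

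Before integrating, I would reduce the bookkeeping using the symmetries of the configuration. The reflection $x\mapsto a-x$ preserves $Q$ and $\beta$ while interchanging $\alpha$ and $\gamma$; hence it swaps the two containment regions and shows that their contributions to the integral coincide. Likewise, the simultaneous change $(y,c)\mapsto(-y,-c)$ fixes $\alpha$, $\gamma$ and $Q$ and sends $\beta\mapsto-\beta$, which is precisely the involution $m\leftrightarrow M$ at the level of the final formula; this explains the symmetric role of $m$ and $M$ and allows one to organise the computation around $\beta\ge 0$.

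Next I would carry out the integration by slicing $Q$ at fixed $y\in[-a/2,a/2]$, with $x$ ranging over $[\abs{y},a-\abs{y}]$. Along such a slice $\alpha$ is increasing, $\gamma$ is decreasing, $\alpha-\gamma$ changes sign at the centre $x=a/2$, and $\alpha+\gamma$ is a convex parabola in $x$; the boundaries between the four regimes are thus explicit. In each regime the integrand is a polynomial in $x$, so the inner $x$-integral is elementary, but its endpoints are the roots of the quadratic $\alpha+\gamma=\abs{\beta}$ (delimiting the support) and are therefore of the form $\tfrac{a}{2}\pm\sqrt{\,\cdot\,}$. Substituting these square-root limits and then performing the outer integration in $y$ produces integrals of the type $\int(\text{polynomial})/\sqrt{\text{quadratic}}\,\diff y$, whose antiderivatives are the source of the logarithmic terms $M^2\ln M$ and $m^2\ln m$; the purely polynomial contributions assemble into the term $mM$. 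Collecting everything and simplifying with $m+M=1$ yields the claimed value $-\tfrac{a^4}{8}\bigl(mM + M^2\ln M + m^2\ln m\bigr)$.

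The main obstacle is not conceptual but organisational: keeping track of the several sub-regions of $Q$ on which the integrand takes its different polynomial forms, and correctly matching the square-root endpoints so that the logarithmic and polynomial pieces recombine into the compact closed form. Two consistency checks guide and validate the computation. As $4\abs{c}\to a^2$ one of $m,M$ tends to $0$ while the other tends to $1$, and each summand vanishes, consistent with the degeneration of the diamond to the null segment of zero volume established in \cref{prop:volume_growth_estimate}. At $c=0$ one has $m=M=\tfrac12$, and the formula reduces to $\tfrac{a^4(2\ln 2-1)}{32}$, in agreement with the value $\mathcal{L}^3(J(\mathbf{0},(1,0,0)))=\tfrac{2\ln 2-1}{32}$ recorded after \cref{prop:volume_growth_estimate}.
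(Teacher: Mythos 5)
Your setup is sound and genuinely different from the paper's: the interval--intersection formula for the fibre length, the identities $\alpha-\gamma=\tfrac{a(2x-a)}{4}$ and $\alpha+\gamma=\tfrac{2x^2-2y^2-2ax+a^2}{4}$, the two symmetries, and both endpoint consistency checks are all correct. The gap is that this lemma \emph{is} an evaluation, and the evaluation is never performed: everything from ``substituting these square-root limits'' onward is assertion rather than computation. That the logarithmic antiderivatives assemble into $M^2\ln M+m^2\ln m$ while the algebraic pieces assemble into $mM$ is precisely the content of the statement, and your two consistency checks cannot certify it --- for instance, adding any multiple of $mM(m-M)^2$ inside the bracket passes both checks. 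Moreover, in your coordinates the computation is heavier than the sketch suggests: $\beta=c+ay/2$ vanishes at $y=-2c/a$, which lies inside $(-a/2,a/2)$ for every $q\in I^+(\mathbf{0})$, so the outer integral must be split there; the quantity $\Delta(y)=4y^2+8\abs{c+ay/2}-a^2$ controlling whether a slice meets the empty-fibre region also changes sign inside $(-a/2,a/2)$ (it is negative at $y=-2c/a$ and positive at $y=\pm a/2$), adding two further breakpoints; and the inner integrals evaluated at $x=\tfrac{a}{2}\pm\tfrac12\sqrt{\Delta(y)}$ leave terms of the form $\mathrm{poly}(y)\sqrt{\Delta(y)}$, which require reduction formulas before any logarithm appears. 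The route would work, but as written it is a plan, not a proof.

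For comparison, the paper eliminates all of this bookkeeping with one move: the null-coordinate substitution $(x,y)=\bigl(\tfrac{a}{2}(s+t),\tfrac{a}{2}(t-s)\bigr)$, with Jacobian $a^2/2$. In these variables the half-widths factor, $\alpha=\tfrac{a^2}{4}st$ and $\gamma=\tfrac{a^2}{4}(1-s)(1-t)$, the support of the integrand becomes the single region $\{1-m/s\le t\le M/(1-s)\}\cap[0,1]^2$ --- no square roots, no absolute values --- and the boundaries between your four regimes become the straight lines $s=m$ and $t=M$. The four resulting pieces are rectangles or sub-/epigraphs of $1-m/s$ and $M/(1-s)$, and integrating the polynomial integrand over them produces the logarithms immediately, from terms proportional to $m^2/s$ and $M^2/(1-s)$. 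If you wish to salvage your approach with comparable effort, the cleanest fix is to perform this change of variables (equivalently, rotate to null coordinates $u=x+y$, $v=x-y$) \emph{before} slicing, so that your interval-intersection analysis acquires the same clean endpoints; otherwise you must actually carry out the multi-case integration that your proposal only describes.
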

\begin{proof}
  We apply the change of variables defined by
  \[
    \varphi \colon [0,1] \times [0,1] \to Q, \ \varphi(s,t) \coloneqq \left( \frac{a}{2}(s+t), \frac{a}{2}(t-s) \right).
  \]
  Such map has Jacobian determinant equal to \(a^2/2\), hence after applying the change of variable to the integral in \eqref{eq:integral_with_max_min} we get
  \[
    \begin{split}
      \mathcal{L}^3(J(0,q)) &= \frac{a^2}{2}\int_{[0,1]^2} \max \left[ 0, \min \left( \frac{a^2}{4}st, c + \frac{a^2}{4} - \frac{a^2}{2}s + \frac{a^2}{4}st \right) \right. \\
        & \left. - \max \left( -\frac{a^2}{4}st, c - \frac{a^2}{4} + \frac{a^2}{2}t - \frac{a^2}{4}st\right)
      \right] \, d\mathcal{L}^2(s,t).
    \end{split}
  \]
  Notice that the integrand is of the form \(\max(0, \min(\alpha,\beta) - \max(\gamma,\delta))\), which is non zero if and only if \(\min(\alpha, \beta) > \max(\gamma,\delta)\). This last condition is equivalent to saying that
  \[
    \alpha > \gamma \ \land \ \alpha > \delta \ \land \ \beta>\gamma \ \land \ \beta>\delta.
  \]
  By writing explicitly these conditions we find that the only non-trivial ones we get are
  \[
    t \leq \frac{1}{1-s} \cdot \frac{1}{2}\left( 1 - \frac{4c}{a^2} \right) \ \land \ t \geq 1 - \frac{1}{s}\cdot \frac{1}{2} \left( 1 + \frac{4c}{a^2} \right)
  \]
  We denote by \(m \coloneqq 1/2(1 +4c/a^2)\) and \(M \coloneqq 1/2(1 - 4c/a^2)\) and notice that \(m,M \in (0,1)\) as the point \((a,0,c)\) was in \(I^+(0)\); moreover \(m = 1-M\). The previous condition can then be rewritten as
  \[
    1-\frac{m}{s} \leq t \leq \frac{M}{1-s},
  \]
  hence we can shrink the domain of integration to the set
  \[
    D \coloneqq \left\{ (s,t) \in [0,1]^2 \ \colon \ 1-\frac{m}{s} \leq t \leq \frac{M}{1-s} \right\}
  \]
  and write
  \[
    \begin{split}
      \mathcal{L}^3(J(\mathbf{0},q)) &= \frac{a^2}{2}\int_D \min \left( \frac{a^2}{4}st, c + \frac{a^2}{4} - \frac{a^2}{2}s + \frac{a^2}{4}st \right)
      \\ &- \max \left( -\frac{a^2}{4}st, c - \frac{a^2}{4} + \frac{a^2}{2}t - \frac{a^2}{4}st\right) \, d\mathcal{L}^2(s,t).
    \end{split}
  \]
  We now have to discuss each case separately; namely, we have to find on which subsets of \(D\) we can write the \(\min\) term and the \(\max\) term as one of their arguments. We have that
  \[
    \frac{a^2}{4}st \leq c + \frac{a^2}{4} - \frac{a^2}{2}s + \frac{a^2}{4}st \implies s \leq m, \quad \text{ and } \quad
    -\frac{a^2}{4}st \geq c - \frac{a^2}{4} + \frac{a^2}{2}t - \frac{a^2}{4}st \implies t \leq M.
  \]
  One can then split the domain of integration into four pieces:
  \[
    \begin{split}
      D_1 &\coloneqq \{(s,t) \in D \ \colon \ s \leq m, t \leq M \}, \quad D_2 \coloneqq \{(s,t) \in D \ \colon \ s \geq m, t \leq M \},\\
      D_3 &\coloneqq \{(s,t) \in D \ \colon \ s \leq m, t \geq M \}, \quad D_4 \coloneqq \{(s,t) \in D \ \colon \ s \geq m, t \geq M \},\\
    \end{split}
  \]
  so that we get
  \[
    \begin{split}
      \mathcal{L}^3(J(\mathbf{0},q)) &= \frac{a^2}{2}\int_{D_1} \frac{a^2}{2}st \, d\mathcal{L}^2(s,t)  + \frac{a^2}{2}\int_{D_4} \, \frac{a^2}{2} - \frac{a^2}{2}s - \frac{a^2}{2}t +\frac{a^2}{2}st \,d\mathcal{L}^2(s,t) \\
      &+ \frac{a^2}{2}\int_{D_2} \frac{a^2}{2}m - \frac{a^2}{2}s + \frac{a^2}{2}st \, d\mathcal{L}^2(s,t)+ \frac{a^2}{2} \int_{D_3}\frac{a^2}{2}M - \frac{a^2}{2}t + \frac{a^2}{2}st \, d\mathcal{L}^2(s,t).
    \end{split}
  \]
  It is straightforward to see that \(D_1\) and \(D_4\) are rectangles, while \(D_2\) and \(D_3\) are, up to translations, the sub- and epigraphs of the functions defining \(D\) on some interval. This structure allows for a direct application of Fubini-Tonelli's Theorem, which immediately yields the result.
\end{proof}
\begin{spacing}{0.85}
  \printbibliography
\end{spacing}

\end{document}